\numberwithin{equation}{section}
\newtheorem*{lemma*}{Lemma}
\newtheorem{theorem}{Theorem}[section]
\newtheorem{lemma}[theorem]{Lemma}
\newtheorem{proposition}[theorem]{Proposition}
\theoremstyle{definition}
\newtheorem{definition}{Definition}
\newtheorem{remark}[theorem]{Remark}
\newtheorem{assumption}{Assumption}
\DeclareMathOperator*{\argmax}{arg\,max}
\def\be{\begin{equation}}
\def\ee{\end{equation}}
\def\bepro{\begin{proposition}}
\def\enpro{\end{proposition}}
\def\belemma{\begin{lemma}}
\def\enlemma{\end{lemma}}
\def\it{\textit}
\newcommand{\E}{\mathop{\mathbb{E}}}
\newcommand{\1}{\mathds{1}}
\newcommand{\R}{\mathbb{R}}
\newcommand{\norm}[1]{\left\lVert#1\right\rVert}
\begin{document}

\title[Microcanonical ensembles with multiple constraints]{Large deviations and localization of the microcanonical ensembles given by multiple constraints}

\author{Kyeongsik Nam}

\begin{abstract}
We develop a unified theory to analyze the microcanonical ensembles with several constraints given by unbounded observables. Several interesting phenomena that do not occur in the single constraint case can happen under the multiple constraints case. We systematically analyze the detailed structures of such microcanonical ensembles in two orthogonal directions using the theory of large deviations. First of all, we establish the equivalence of ensembles result, which exhibits an interesting phase transition phenomenon. Secondly, we study the localization and delocalization phenomena  by obtaining large deviation results for the joint law of empirical distributions and the maximum component. Some concrete  examples for which the theory applies will be given as well.
\end{abstract}

\address{ Department of Mathematics, Evans Hall, University of California, Berkeley, CA
94720, USA} 

\email{ksnam@math.berkeley.edu}

 \subjclass[2010]{60D05, 60F10, 82B05, 82B26}

 \keywords{Large deviation principle, microcanonical ensemble, equivalence of ensembles, Gibbs conditioning principle, phase transition, localization} 
 
\maketitle

\section{introduction}
\subsection{Motivation}
There are several notions of the statistical ensembles describing the mechanical system. For instance, a canonical ensemble  represents the possible states in the equilibrium with a heat reservoir at a fixed temperature, whereas a microcanonical ensemble  represents the states having a specified total energy.  The Gibbs' principle, which is also called the principle of  \it{equivalence of ensembles}, states that in the infinite volume limit, the microcanonical ensemble converges to the canonical ensemble with a certain temperature. The theory of \it{large deviations} has provided an elegant way to describe the equivalence of ensembles results.  We refer to \cite{rs} for a monograph about Gibbs measures and  equilibrium statistical mechanics.

The theory of microcanonical  ensembles with a  single constraint has been well-established. In the simplest case when the single constraint is given by
\begin{align} \label{single0}
\left \vert\frac{\phi(X_1)+\cdots+\phi(X_n)}{n}-c\right \vert \leq \delta, \quad  \delta>0 \ \text{small},
\end{align}
or more generally under the presence of an interacting potential, the classical equivalence of ensembles result provides a thermodynamic behavior of the microcanonical ensemble (see  for instance \cite{dsz,g2,g3,g4}). The single constraint \eqref{single0} with the unbounded function $\phi$ has a great  importance since it naturally arises in the various areas of mathematics and physics. In particular, the  microcanonical distributions given by a single $l^p$-constraint ($\phi(x)=x^p$ in \eqref{single0}) have been studied extensively due to its wide applications in geometry and PDE theory. For instance, in \cite{n,nr}, the surface measure and cone measure of the $l^p$-sphere are analyzed, and the annealed, quenched large deviations for the random projection of $l^p$-spheres are established in \cite{gkr}. Also, Barthe et al. \cite{bgmn} provided the probabilistic method to interpret the volume measure of $l^p$-balls. We refer to \cite{df,kr,l2,rr,s} for more details about the $l^p$-spheres.

Beyond the single constraint, it is natural to consider the microcanonical ensembles given by several constraints with unbounded observables. One  motivation for studying these types of ensembles comes from the \it{nonlinear Schr\"odinger equation} (NLS) on $\R^d$:
\begin{align} \label{1}
u_t=-\Delta u + \kappa |u|^{p-1}u.
\end{align}
The NLS \eqref{1} can be regarded as the infinite-dimensional Hamiltonian ordinary differential equation with the Hamiltonian $H$ given by
\begin{align} \label{2}
 H(u)=\int \frac{1}{2}|\nabla u|^2  + \frac{\kappa}{p+1}|u|^{p+1}.
\end{align}
Since $H$ is conserved under the Hamiltonian flow, formally speaking, the Gibbs measure of the form
\begin{align*}
\exp\big(-\beta H(u)\big) dP(u)
\end{align*}
for the fictitious Lebesgue measure $dP$ on the space of functions would be the invariant measure under NLS flow according to the Liouville's theorem. This can be made rigorous when the underlying space is $\mathbb{T}^d$ and  the notion of Gaussian free field is introduced. It has been shown  that the Gibbs measures of the type
\begin{align} \label{3}
\exp\left(\kappa \beta \int_{\mathbb{T}^d} |u|^{p+1} dx\right) dQ(u)
\end{align}
($dQ$ denotes the Gaussian free field on $\mathbb{T}^d$) exist for the certain values of $d$ and $p$ (see \cite{b1,b2,bs,lrs} for details).   The concept of Gibbs measure has played a crucial role in understanding the qualitative properties of certain solutions to NLS. For instance,  invariance of the Gibbs measure \eqref{3} and the probabilistic well-posedness have been established for a large class of equations. We refer to  \cite{d,r,tv} for more details in this direction.

Another way to understand the qualitative behaviors of a solution to NLS is to consider the microcanonical ensembles.   Note that not only the Hamiltonian $H$ defined in \eqref{2} is conserved under the NLS flow \eqref{1}, the mass
\begin{align}\label{mass}
M(u) = \int |u|^2 
\end{align} 
is also  conserved. Motivated by this, the natural invariant  measures of NLS \eqref{1}, considered first by Chatterjee \cite{cha1}, are the conditional distributions of the fictitious Lebesgue measure on the space of functions on the set:
\begin{align*}
|M(u)-m|\leq \delta,\quad |H(u)-E|\leq \delta, \quad \delta>0 \ \text{small}.
\end{align*} 
This can be made rigorous once the underlying space $\R^d$ is discretized into the grids with size $h$. More precisely, for $V_n=\{0,1,\cdots,n-1\}^d$ and the mass and Hamiltonian defined by
\begin{align*}
M_{h,n}(u) = h^d \sum_{x\in V_n} |u(x)|^2,\quad H_{h,n}(u)=\frac{h^d}{2}\sum_{x,y\in V_n, x \sim y} \Big \vert \frac{u(x)-u(y)}{h} \Big \vert ^2 + \frac{\kappa h^d}{p+1}\sum_{x\in V_n} |u(x)|^{p+1}
\end{align*}
($x\sim y$ means that $x$ and $y$ are adjacent), the constraint is defined by
\begin{align} \label{4}
C_{\delta,h,n}:=\{u\in \mathbb{C}^{V_n} |\ |M_{h,n}(u)-m|\leq \delta,\ |H_{h,n}(u)-E|\leq \delta \}.
\end{align} Now, let us consider the uniform distribution on the set \eqref{4}. This  microcanonical ensemble  has an advantage over the Gibbs measures of the form \eqref{3} in the sense that it can be defined for general values of $d$ and $p$.

A solution to the defocusing NLS ($\kappa=1$ in \eqref{1}) exhibits the dispersive behavior like the linear Schr\"odinger equation. On the other hand, in the case of focusing NLS ($\kappa=-1$ in \eqref{1}), the existence of a \it{ground state soliton} $Q_m$ demonstrates that the dispersion may not happen. Note that at the mass-subcritical regime ($1<p<1+\frac{4}{d}$), the ground state soliton  is a unique minimizer (up to the spatial and phase translation) of the variational problem:
 \begin{align*}
E(m):=\inf_{M(f)=m} H(f).
\end{align*}
 The \it{soliton resolution conjecture} claims that in the mass-subcritical regime, for the generic initial conditions, a solution to the focusing NLS gets closer to a soliton. In \cite{cha1}, Chatterjee established the statistical version of the soliton resolution conjecture by studying a thermodynamic limit of the microcanonical ensemble of type \eqref{4}. He showed that when $E(m)<E$, if the discrete function $f_{\delta,h,n}$ is selected randomly according to the uniform distribution on the set $C_{\delta,h,n}$ in \eqref{4}, and $\tilde{f}_{\delta,h,n}$ is denoted by its continuum extension, then for $2<q\leq \infty$ and any $\epsilon>0$,
 \begin{align} \label{soliton}
 \lim_{h\rightarrow 0}\limsup_{\delta \rightarrow 0}\limsup_{n\rightarrow \infty} \mathbb{P}(L^q(\tilde{f}_{\delta,h,n},Q_m)>\epsilon ) =0.
 \end{align}
 Here, 
 $L^q(u,v):=\inf_{\alpha,\beta} \norm{u(\cdot) - e^{i\alpha}v(\cdot + \beta)}_q$.
 In other words, a typical function $f_{\delta,h,n}$ of the microcanonical ensemble \eqref{4} approximates the ground state soliton in a certain sense. 
This interesting result demonstrates that   understanding the structures of  the  microcanonical ensembles with multiple constraints can lead to study qualitative and statistical properties of a solution to the corresponding PDE \eqref{1}. We refer to \cite{t} for a monograph on the general dispersive PDE theory and  the soliton resolution conjecture. 

The remarkable aspect of the statement \eqref{soliton} is that  although the energy of a typical function $f_{\delta,h,n}$ converges to $E$, the energy of a ground state soliton $Q_m$ is $E(m)$ which is strictly less than $E$. This may look a contradiction, but it is plausible since  the $L^q$ norm is too weak to control the Hamiltonian \eqref{2}. Roughly speaking, \eqref{soliton} implies that a typical function $f_{\delta,h,n}$ can be decomposed into  $f^1_{\delta,h,n}$ and $f^2_{\delta,h,n}$ such that $f^1_{\delta,h,n}$ has a negligible $L^\infty$ norm but possesses a strictly positive energy, whereas $f^2_{\delta,h,n}$ is close to the ground state soliton $Q_m$ and has an energy close to $E(m)$ (see \cite{cha1} for details). This striking phenomenon essentially arises from the fact that the microcanonical ensemble \eqref{4} has several constraints with unbounded observables (the mass  \eqref{mass} corresponds to the $l^2$-type constraint $x\mapsto x^2$, and the Hamiltonian \eqref{2} involves the gradient term and a polynomial $x\mapsto x^{p+1}$). 

In order to exemplify  the above phenomenon more concretely, let us first consider the microcanonical ensembles given by two constraints:
\begin{align} \label{two}
\bigcap_{i=1}^2 \Big\{\left \vert\frac{\phi_i(X_1)+\cdots+\phi_i(X_n)}{n}-a_i\right \vert \leq \delta \Big\}, \quad  \delta>0 \ \text{small},
\end{align}
with bounded and continuous $\phi_i$'s. Here, the reference measure on the configuration space $\Omega:=(0,\infty)^\mathbb{N}$ is given by $\mathbb{P}:=\lambda^{\otimes \mathbb{N}}$ for a probability measure $\lambda$ on $(0,\infty)$, and $X_i:\Omega \rightarrow (0,\infty)$ is a projection onto the $i$-th coordinate. The classical Gibbs' principle asserts that  as $n\rightarrow \infty$ followed by $\delta \rightarrow 0$, the law  of $X_1$ converges to the distribution
$d\lambda^* = \frac{1}{Z}e^{\alpha \phi_1 + \beta \phi_2} d\lambda$ for some $\alpha,\beta$ satisfying
$\int \phi_i d\lambda^* = a_i$, $i=1,2$.

However, unlike the microcanonical ensembles \eqref{two} with bounded $\phi_i$'s, several interesting phenomena can happen when $\phi_i$'s are unbounded observables. For instance, when the configuration space is $(0,\infty)^{\mathbb{N}}$, consider the uniform distribution on the set
\begin{align} \label{lp}
\left\{\frac{\phi_1(X_1)+\cdots+\phi_1(X_n)}{n}=1\right \} \bigcap  \left\{\frac{\phi_2(X_1)+\cdots+\phi_2(X_n)}{n}=b\right\}
\end{align}
with unbounded functions $\phi_i(x)=x^i$ for $i=1,2$.
Chatterjee \cite{cha2} established the convergence of the finite marginal distributions  of the microcanonical ensemble \eqref{lp}. When $1\leq b\leq 2$, as $n\rightarrow \infty$, the law of $X_1$ converges weakly to the $G_{1,b}$-distribution, where $G_{1,b}$ is a probability distribution on $(0,\infty)$ of the form $\frac{1}{Z}e^{rx+sx^2}dx$  satisfying 
\begin{align*}
\int xdG_{1,b} = 1, \quad \int  x^2 dG_{1,b} = b.
\end{align*} 
On the other hand,  when $b>2$, as $n\rightarrow \infty$, the law of $X_1$ converges weakly to  $\lambda^*:=\text{exp}(1)$ distribution. The striking fact is that the expectation of $\phi_2$ under $\lambda^*$, which is equal to 2, is strictly less than $b$. In other words, in a thermodynamic limit, the discrepancy $b-2>0$ occurs in the second constraint.  This is what happens in the microcanonical ensemble \eqref{4}. Roughly speaking, $\text{exp}(1)$ distribution plays the role of the ground state soliton $Q_m$  in \eqref{soliton}.  In fact, like the microcanonical ensemble \eqref{4}, in a thermodynamic limit of \eqref{lp} when $b>2$, some localized site $1\leq i\leq n$ would possess a strictly positive $l^2$-mass $\frac{x_i^2}{n}$ and a negligible $l^1$-mass $\frac{x_i}{n}$ due to the existence of a discrepancy $b-2$ corresponding to the second constraint (see \cite{cha2} for details). The example \eqref{lp} shows that the microcanonical ensembles with  several unbounded  constraints  behave qualitatively differently from the microcanonical ensembles \eqref{two} with bounded observables $\phi_i$'s.

 As mentioned earlier, it is crucial to understand the microcanonical ensembles with several constraints,  particularly given by unbounded  observables such as \eqref{4}, since they  have wide applications in  geometry and PDE theory as well as statistical mechanics. However, to the author's knowledge, no systematic and  unified methods to analyze such microcanonical ensembles have  been developed yet. 
  In this paper, we develop a new and unified theory to study the detailed structures of such microcanonical distributions in two orthogonal directions.  Remarkably, these types of microcanonical ensembles turn out to behave  differently from the single constraint case or the multiple constraints case given by bounded  observables.

  \subsection{Previous works and our contributions}

A theory of equivalence between the microcanonical ensemble given by a single constraint and the grand canonical ensemble is quite classical and has been studied extensively. See for example \cite{dsz,lps} for a bounded interacting potential case and \cite{g2,g3,g4} for  a possibly unbounded interacting potential case.  Beyond these classical cases, a  specific kind of the microcanonical ensembles given by several constraints with unbounded observables (see \eqref{4} and \eqref{lp}) is studied in \cite{cha1,cha2}, but the methods used in there are ad hoc and finer structures of the microcanonical ensembles are far from being well-understood.   The first main contribution of our work is to establish the equivalence of ensembles result for general microcanonical ensembles given by several constraints with unbounded observables.  This result is new even for the simplest case such as under the absence of interacting potentials, and surprisingly
such microcanonical ensembles  behave differently from the single constraint case.
 
In order to illustrate this,  let us  consider the  microcanonical ensemble given by the  single constraint: consider the uniform distribution on the set
\begin{align} \label{single}
\Big \{ \left \vert\frac{\phi(X_1)+\cdots+\phi(X_n)}{n}-c\right \vert \leq \delta \Big \}, \quad  \delta>0 \ \text{small}.
\end{align} 
The  \it{maximum entropy principle} asserts that   as $n\rightarrow \infty$ followed by $\delta \rightarrow 0$,  the law of $X_1$ converges to the probability measure $\lambda^*$ maximizing the differential entropy $h(\mu)$ over the constraint $\int \phi d\mu = c$ (see Proposition \ref{prop 5.1} for details). The analogous equivalence of ensembles result is known for the general Hamiltonian with possibly unbounded interacting potentials (see for example \cite[Theorem 3.3]{g4}).

 However, in the case of multiple constraints: 
 \begin{align} \label{several}
\bigcap_{i=1}^k \Big\{\left \vert\frac{\phi_i(X_1)+\cdots+\phi_i(X_n)}{n}-a_i\right \vert \leq \delta \Big\}, \quad  \delta>0 \ \text{small}, \ k\geq 2,
\end{align}
with unbounded functions $\phi_i$'s, an  interesting phenomenon occurs:   some of the  $k$ constraints \eqref{several}
 may become extraneous  in a thermodynamic limit of the microcanonical ensembles. More precisely, under the uniform distribution on the set \eqref{several}, then as $n\rightarrow \infty$ followed by $\delta \rightarrow 0$, the limit distribution of  $X_1$ may be irrelevant to one of the multiple constraints (see Theorem \ref{theorem 1.1} and \ref{theorem 1.0} for details). In other words, as mentioned in the microcanonical ensemble case \eqref{lp}, the limit distribution $\lambda^*$ of $X_1$ may not satisfy $\int \phi_i d\lambda^* = a_i$ for some $i$, which is a striking difference from the single constraint case. We systematically analyze this interesting phenomenon in the first part of the paper as an application of the large deviation theory.

Another remarkable qualitative difference from the single constraint case is the localization phenomenon, which provides the information that  complements the equivalence of ensembles result (see Section \ref{section 2} for the  explanations).   Under the single constraint \eqref{single} with an unbounded function $\phi$, it is not hard to check that localization does not happen (see Proposition \ref{prop 5.2} for a precise statement). However, when the microcanonical ensemble is given by multiple constraints \eqref{several} with unbounded $\phi_i$'s, as mentioned in the example \eqref{lp}, a strictly  positive mass can be concentrated on some sites (see Theorem \ref{theorem 1.4} for details). In the second part of the paper, we systemically study the localization and delocalization phenomena of the microcanonical ensembles using the theory of large deviations. In particular, we derive a large deviation  principle for the joint law of empirical distributions and the maximum component, which reveals a detailed structure of the corresponding microcanonical ensemble.

\subsection{Organization of the paper}
The paper is organized as follows. In Section \ref{section 2}, we state the main theorems and provide their interpretations. In Section \ref{section 3}, we obtain the large deviation results for the joint law of empirical distributions and several empirical means, and then precisely characterize the limit of finite marginal distributions of the microcanonical ensembles. In Section \ref{section 4}, we study the localization and delocalization phenomena of the microcanonical ensembles with multiple constraints.  In Section \ref{section 5}, some concrete examples of the microcanonical distributions for which the theory applies will be covered.

Throughout the paper, for a Polish space $\mathcal{S}$, let us denote $\mathcal{B}$ by the Borel $\sigma$-field on $\mathcal{S}$ and $C_b(\mathcal{S})$ by the set of bounded continuous functions on $\mathcal{S}$. Let us define $\mathcal{M}(\mathcal{S})$ as the set of finite regular Borel measures on $\mathcal{S}$, and $\mathcal{M}_1(\mathcal{S})$ as the subspace of probability measures. Given the set of bounded continuous functions $\{g_k\}$ that determine the weak convergence on $\mathcal{M}_1(\mathcal{S})$, we define a metric  $d$ on $\mathcal{M}_1(\mathcal{S})$ by
\begin{align} \label{metric}
d(\mu,\nu):=\sum_{k=1}^\infty \frac{1}{2^k \norm{g_k}_\infty}\Big[\int_\mathcal{S} g_k d\mu - \int_\mathcal{S} g_k d\nu\Big]
\end{align}
for two probability measures $\mu,\nu$.
Note that the weak topology on  $\mathcal{M}_1(\mathcal{S})$ coincides with the topology given by the metric $d$. Throughout this paper, we assume that $\mathcal{M}_1(S)$ is equipped with the weak topology. Also, $\partial f$ denotes the subdifferential of the function $f$, and we simplify the integral $\int_0^\infty f$ to $\int f$.

\section{Main results} \label{section 2}
Consider the configuration space $\Omega = (0,\infty)^\mathbb{N}$, and let us   denote  $X_i:\Omega \rightarrow (0,\infty)$ by the projection onto the $i$-th coordinate. 
Assume that the functions $\phi_1,\cdots,\phi_k$ $(k\geq 2)$ satisfying the following Assumption \ref{assume0} are given.
\begin{assumption} \label{assume0}
Functions $\phi_1,\cdots, \phi_k$ $(k\geq 2)$ satisfy \\
(C1) For each $1\leq i\leq k$, $\phi_i : (0,\infty)\rightarrow (0,\infty)$ is $C^1$, increasing, and $\lim_{x\rightarrow \infty} \phi_i(x)=\infty$. \\
(C2) For each $1\leq i\leq k$ and any $c>0$, $\int_0^\infty e^{-c\phi_i}dx < \infty$.\\
(C3) There exists $\kappa>1$ such that $\phi_i^\kappa < \phi_{i+1}$ for each $1\leq i\leq k-1$. \\
(C4) There exists $C,M>0$ such that $x>C \Rightarrow \frac{1}{C}\phi_i(x)^{-M} < \phi_i'(x) < C\phi_i(x)^{M}$ for each $i$.
\end{assumption}

Conditions (C1) and (C2) imply that $\phi_i$'s are unbounded and grows not slowly at infinity.
Condition (C3) means that for each index $i$, $\phi_{i+1}$ grows faster than $\phi_i$ at  infinity.  A technical assumption (C4) will be used to prove Lemma \ref{lemma 3.1} later. It is not hard to see that a large class of functions $\phi_i$'s satisfy the Assumption \ref{assume0}. For instance, a large class of polynomials with strictly increasing degrees, which  is of our main interest due to its  wide  applications in geometry and PDEs as explained in the introduction, satisfy the Assumption \ref{assume0}.
In particular, the constraint that Chatterjee considered in \cite{cha2} corresponds to the case $\phi_1(x)=x$ and $\phi_2(x)=x^2$.

For each $1\leq i\leq k$, define the empirical means
\begin{align*}
S^i_n := \frac{\phi_i(X_1)+\cdots+\phi_i(X_n)}{n},
\end{align*} 
 and then consider the following constraints for each $\delta>0$:
\begin{align*}
C^\delta_n := \cap_{i=1}^k \{|S^i_n-a_i|\leq \delta\}.
\end{align*}
We are interested in the infinite volume behavior of the uniform distribution on the constraint $C^\delta_n$ as the gap $\delta$ converges to zero. Since the Lebesgue measure is not a probability measure, we  define a  reference measure $\mathbb{P}$ to be $\mathbb{P}:=\lambda^{\otimes \mathbb{N}}$ on $\Omega = (0,\infty)^{ \mathbb{N}}$, where $\lambda
$ is a probability measure on $(0,\infty)$ defined by
\begin{align} \label{basic}
\lambda = \frac{1}{Z}e^{-\phi_1} dx
\end{align}($Z$ is a normalizing constant). The motivation to choose such reference measure is that it is a probability measure and once conditioned on the constraint $C^\delta_n$, it behaves like the uniform distribution as $\delta \rightarrow 0$. In fact, the conditional distribution of any reference measure $(\frac{1}{Z}e^{p_1\phi_1+\cdots+p_k\phi_k} dx)^{\otimes \mathbb{N}}$  on the constraint $C^\delta_n$ 
approximates the uniform distribution in a certain sense  as $\delta \rightarrow 0$. We refer to Remark \ref{remark 3.2} for the detailed explanations.

Now, let us consider the following microcanonical distribution:
\begin{align}  \label{micro}
\mathbb{P}((X_1,\cdots,X_n)\in \cdot \ |  \ C_n^\delta).
\end{align}
We develop a unifying method to systematically analyze the detailed behaviors of \eqref{micro} as $n\rightarrow \infty$ followed by $\delta \rightarrow 0$.

Note that for certain values of $(a_1,\cdots,a_k)$,  the conditional distribution \eqref{micro} may not be well-defined since the constraint $C^\delta_n$ may be an empty set for small $\delta>0$. In order to avoid this problem, we define the \it{admissible set} in the following way: let us denote $\mathcal{A}_1\subset (0,\infty)^{ (k-1)}$ by
\begin{multline*}
\mathcal{A}_1:=\text{int}\Big\{(v_1,\cdots,v_{k-1})\in (0,\infty)^{ (k-1)}\ \Big \vert \ \exists \mu\in \mathcal{M}_1(\R^+)\ \text{such that} \ h(\mu)\neq -\infty,\\
\int \phi_1 d\mu =v_1, \cdots, \int \phi_{k-1}d\mu=v_{k-1},\int \phi_k d\mu < \infty \Big\}.
\end{multline*}
Here, $h(\mu)$ is the \it{differential entropy}  of the probability measure $\mu\in \mathcal{M}_1(\R^+)$, defined by
\begin{align*}
h(\mu):=&\begin{cases} -\int \frac{d\mu}{dx}\log(\frac{d\mu }{dx}) dx  &\mu \ll dx,\\
-\infty &\text{otherwise}. \end{cases}
\end{align*}
For each $(v_1,\cdots,v_{k-1})\in \mathcal{A}_1$, define 
\begin{align*}
g_1(v_1,\cdots,v_{k-1}):=\inf_{\mu\in \mathcal{M}_1(\R^+)} \Big\{\int \phi_k d\mu \Big \vert h(\mu)\neq -\infty, \int \phi_1 d\mu =v_1, \cdots, \int \phi_{k-1}d\mu=v_{k-1}\Big\}.
\end{align*}
Finally,  the admissible set $\mathcal{A}$ is defined by
\begin{align*}
\mathcal{A}:=  \{(v_1,\cdots,v_{k-1},v_k) | (v_1,\cdots,v_{k-1})\in \mathcal{A}_1, v_k>g_1(v_1,\cdots,v_{k-1})\}.
\end{align*}
Also, we assume that a map $g_1 : \mathcal{A}_1 \rightarrow \R$ is continuous, which implies that $\mathcal{A}$ is an open set.
Throughout this paper, we only consider the case $(a_1,\cdots,a_k)\in \mathcal{A}$ so that the constraint $C^\delta_n$ is a non-empty set, and thus the microcanonical distribution is well-defined (see Remark  \ref{remark 2.5} for the explanations).

We first characterize the law to which the finite marginal distribution $\mathbb{P}((X_1,\cdots,X_j)\in \cdot | C^\delta_n)$ weakly converges as $n\rightarrow \infty$ followed by $\delta \rightarrow 0$. 
\begin{theorem} \label{theorem 1.1} Let $\lambda^*$ be the (unique) maximizer of the differential entropy $h(\cdot)$ over the set
\begin{align} \label{11}
\Big\{\mu \in \mathcal{M}_1(\R^+) \ \Big \vert \ \int \phi_1 d\mu =a_1, \cdots, \int \phi_{k-1}d\mu=a_{k-1}, \int \phi_kd\mu\leq a_k \Big\}.
\end{align}
Then, for any fixed positive integer $j$,
\begin{align} \label{12}
\lim_{\delta \rightarrow 0} \lim_{n \rightarrow \infty} \mathbb{P}((X_1,\cdots,X_j)\in \cdot \ | \ C^\delta_n) = (\lambda^*)^{\otimes j}.
\end{align}
\end{theorem}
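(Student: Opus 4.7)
The plan is to derive Theorem \ref{theorem 1.1} from a joint large deviation principle (LDP) for the empirical measure $L_n := \frac{1}{n}\sum_{i=1}^{n}\delta_{X_i}$ together with the empirical means $(S_n^1,\ldots,S_n^k)$, combined with a Gibbs conditioning argument. First I would establish, as announced for Section \ref{section 3}, that under $\mathbb{P}$ the joint law of $(L_n, S_n^1,\ldots,S_n^k)$ satisfies an LDP whose rate function is
\[
I(\mu, s_1,\ldots,s_k) = \begin{cases} H(\mu\,|\,\lambda) & \text{if } \int \phi_i\, d\mu = s_i \ (1 \le i \le k-1) \text{ and } \int \phi_k\, d\mu \le s_k, \\ +\infty & \text{otherwise}, \end{cases}
\]
where $H(\cdot\,|\,\lambda)$ denotes relative entropy. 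The crucial structural point, driven by the hierarchy (C3) $\phi_i^\kappa < \phi_{i+1}$, is that only the $\phi_k$-mass can legitimately be \emph{localized} away from the bulk described by $\mu$: if $X_j \gg 1$ with $\phi_k(X_j)/n = O(1)$ then $\phi_i(X_j)/n \lesssim \phi_k(X_j)^{1/\kappa}/n \to 0$ for $i < k$, so for $i\le k-1$ no $\phi_i$-mass can escape and the equality $\int \phi_i\, d\mu = s_i$ is forced.

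Next I would specialize the LDP to the event $C_n^\delta = \bigcap_i\{|S_n^i - a_i|\le\delta\}$ and take $\delta\to 0$. A Gibbs conditioning argument then shows that the conditional law of $L_n$ given $C_n^\delta$ concentrates on the minimizer of $H(\cdot\,|\,\lambda)$ over
\[
\mathcal{K} := \Big\{\mu \in \mathcal{M}_1(\R^+) : \int \phi_i\, d\mu = a_i\ (1 \le i \le k-1),\ \int \phi_k\, d\mu \le a_k\Big\}.
\]
Since $\lambda = Z^{-1}e^{-\phi_1}dx$, the identity $H(\mu\,|\,\lambda) = -h(\mu) + \int \phi_1\, d\mu + \log Z = -h(\mu) + a_1 + \log Z$ valid on $\mathcal{K}$ turns this minimization into maximizing the differential entropy $h$ over $\mathcal{K}$, which is precisely the variational problem defining $\lambda^*$. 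Strict concavity of $h$ on the convex set $\mathcal{K}$ yields uniqueness of the maximizer, while the admissibility $(a_1,\ldots,a_k)\in \mathcal{A}$ together with the assumed continuity of $g_1$ supplies existence and the continuity in $\delta$ needed to justify the iterated limit. To upgrade $L_n \to \lambda^*$ to convergence \eqref{12} of $j$-dimensional marginals, I would exploit that $C_n^\delta$ is symmetric in the coordinates, so $\mathbb{P}(\cdot\,|\,C_n^\delta)$ is exchangeable, and conclude via a standard de Finetti / Hewitt--Savage-type argument that the $j$-marginal is asymptotically the $j$-fold tensor power of the limit of $L_n$.

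The hard part throughout is that the $\phi_i$'s are unbounded, so the functionals $\mu \mapsto \int \phi_i\, d\mu$ fail to be continuous in the weak topology on $\mathcal{M}_1(\R^+)$. Extending the Sanov/Gibbs conditioning framework to cover them requires, first, exponential tail estimates (expected to be the content of Lemma \ref{lemma 3.1}, where (C4) enters as the quantitative polynomial growth control) to rule out escape of $\phi_i$-mass to infinity for $i<k$; and second, a delicate lower semicontinuity analysis of the rate function on the slab events $\{|s_i-a_i|\le\delta\}$ as $\delta \to 0$, which must respect the asymmetry between the equality constraints for $i < k$ and the inequality $\int \phi_k\, d\mu \le a_k$. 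Once these ingredients are in place, the variational problem is essentially finite-dimensional in the dual variables attached to the $\phi_i$'s, so the maximizer $\lambda^*$ can be identified and the theorem follows.
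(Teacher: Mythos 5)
Your proposal follows essentially the same route as the paper: a joint (weak) large deviation principle for $(L_n,S^1_n,\cdots,S^k_n)$ with the asymmetric rate function (equalities for $\phi_1,\cdots,\phi_{k-1}$, inequality for $\phi_k$, forced by (C3)), then the Gibbs conditioning principle on the shrinking slabs $C^\delta_n$, the identity $H(\mu\,|\,\lambda)=-h(\mu)+a_1+C$ to recast the minimization as entropy maximization, and finally exchangeability plus a propagation-of-chaos argument (the paper cites Sznitman) to pass from the empirical measure to the $j$-marginals. The only small discrepancy is bookkeeping: in the paper the tightness/closedness needed for the unbounded observables is handled by Lemma \ref{lemma 2.4} (via (C2)--(C3)), while (C4) and Lemma \ref{lemma 3.1} enter only in the localization analysis, not in the proof of Theorem \ref{theorem 1.1}.
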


\begin{remark} \label{remark 2.2}
When each function $\phi_i$ is bounded and continuous, as a simple application of the maximum entropy principle, one can deduce that the limiting law $\lambda^*$ in \eqref{12} is a (unique) maximizer of the differential entropy $h(\cdot)$ over the set
\begin{align} \label{bounded}
\Big\{\mu \in \mathcal{M}_1(\R^+) \ \Big \vert \ \int \phi_1 d\mu =a_1, \cdots, \int \phi_{k-1}d\mu=a_{k-1}, \int \phi_kd\mu= a_k \Big\}.
\end{align} 
In fact, according to the maximum entropy principle, the limiting distribution $\lambda^*$ in \eqref{12} is a (unique) minimizer of the relative entropy $H(\cdot | \lambda)$ over the set  \eqref{bounded}. Thus, using the identity: for $\mu  \ll dx$,
\begin{align*}
H(\mu | \lambda ) &= \int \log(\frac{d\mu }{d\lambda})d\mu = \int \log(\frac{d\mu }{dx})d\mu +\int \log(\frac{dx}{d\lambda})d\mu  \nonumber \\
&= -h(\mu) + \int \phi_1 d\mu + C = -h(\mu) + a_1 + C,
\end{align*}
it follows that $\lambda^*$ is a (unique) maximizer of the differential entropy $h(\cdot)$ over the set  \eqref{bounded}. 

On the other hand,  when the macroscopic observables $\phi_i$'s are unbounded, the classical maximum entropy principle is not applicable since the map $\mu \mapsto \int \phi_i d\mu$ may not be continuous.  Theorem \ref{theorem 1.1} claims that the last condition $\int \phi_k d\mu = a_k$ in the set \eqref{bounded} is enlarged to the condition $\int \phi_k d\mu \leq a_k$. This implies that for certain values of $a_1,\cdots,a_{k-1}$, the last constraint $|S^k_n-a_k|\leq \delta$ may be irrelevant to the limiting law of the finite marginal distribution of \eqref{micro}. In other words, unlike the case when $\phi_i$'s are bounded continuous, the limiting law $\lambda^*$  may not satisfy $\int \phi_k d\lambda^* = a_k$. This phenomenon is precisely described in Theorem \ref{theorem 1.0}, which is about the equivalence of ensembles result.
\end{remark}

It turns out that as in Remark \ref{remark 2.2}, the structure of a set  \eqref{11} in Theorem \ref{theorem 1.1} is also  different from the case when the microcanonical distribution is given by a  single constraint. In fact, in the case of single constraint \eqref{single} under the reference measure $(\frac{1}{Z}e^{-\phi}dx)^{\otimes \mathbb{N}}$ (assume that a  unbounded function $\phi$ satisfies the conditions (C1) and (C2) in Assumption \ref{assume0}), $\lambda^*$ in Theorem \ref{theorem 1.1} is given by
\begin{align*}
\lambda^* =  \argmax_{\mu \in \mathcal{M}_1(\R^+)} \Big\{h(\mu) \ \Big \vert \ \int \phi d\mu \leq c \Big\} = \argmax_{\mu \in \mathcal{M}_1(\R^+)} \Big\{h(\mu) \ \Big \vert \ \int \phi d\mu =c \Big\}
\end{align*}
(see  Section \ref{section 5.1}  and the identity \eqref{single1}). In other words, even when $\phi$ is unbounded, the limiting distribution $\lambda^*$ satisfies $\int \phi d\lambda^* = c$. We refer to \cite{g3,g4} for the similar equivalence of ensembles result for more general Hamiltonian with superstable interaction.

 On the other hand, as mentioned before, in the case of multiple constraints with unbounded $\phi_i$'s satisfying Assumption \ref{assume0}, the expectation of $\phi_k$ under the limiting distribution $\lambda^*$ may not be equal to $a_k$. Also, the expectation of $\phi_i$'s ($1\leq i\leq k-1$) under the limiting distribution $\lambda^*$ is always equal to $a_i$. This is because the unbounded function $\phi_k$ controls other functions, and the main reason behind this phenomenon is illustrated in Theorem \ref{theorem 2.2}.

Now, let us precisely characterize a unique maximizer of the differential entropy $h(\cdot)$ over the set \eqref{11}. In order to accomplish this, we need the following definition:

\begin{definition} \label{def}
Define the logarithmic moment generating function:
\begin{align}\label{ham}
H(p_1,\cdots,p_k):= \log \int e^{p_1\phi_1+\cdots+p_k\phi_k} d\lambda.
\end{align}
Let us denote $\pi_1$ and $\pi_2$ by the projections $\pi_1(v_1,\cdots,v_{k-1},v_k) = (v_1,\cdots,v_{k-1})$ and $\pi_2(v_1,\cdots,v_{k-1},v_k)=v_k$.
Then, define $\mathcal{S}_1\subset \mathcal{A}_1$ by a collection of $(v_1,\cdots,v_{k-1})$'s such that there exist $p_1,\cdots,p_{k-1}$  satisfying
\begin{align}\label{a1}
(v_1,\cdots,v_{k-1}) \in \pi_1(\partial H(p_1,\cdots,p_{k-1},0)).
\end{align}
For $(v_1,\cdots,v_{k-1})\in \mathcal{S}_1$, choose a unique $(p_1,\cdots,p_{k-1})$ satisfying \eqref{a1} (see Remark \ref{remark 2.9} for the explanations), and then define a function $g_2:\mathcal{S}_1\rightarrow \R$ by
\begin{align} \label{def1}
g_2(v_1,\cdots,v_{k-1}) := \inf \{\pi_2 (\partial H(p_1,\cdots,p_{k-1},0))\}.
\end{align}
 Finally, define $\mathcal{S}_2 := \mathcal{A}_1\cap \mathcal{S}_1^c$. 
\end{definition}

Now, one can precisely characterize the distribution $\lambda^*$  in Theorem \ref{theorem 1.1} using the notions in Definition \ref{def}. It exhibits  an interesting phase transition phenomenon:

\begin{theorem} \label{theorem 1.0}
Fix any positive integer $j$. Then,  \begin{align*}
\lim_{\delta \rightarrow 0} \lim_{n \rightarrow \infty} \mathbb{P}((X_1,\cdots,X_j)\in \cdot \ | \ C^\delta_n) = (\lambda^*)^{\otimes j},
\end{align*}
where $\lambda^*$ is characterized as follows: when $(a_1,\cdots,a_{k-1})\in \mathcal{S}_1$ and $a_k \geq g_2(a_1,\cdots,a_{k-1})$,
\begin{align*}
\lambda^* = \frac{1}{Z}e^{p_1\phi_1+\cdots+p_{k-1}\phi_{k-1}} dx
\end{align*}
for $p_1,\cdots,p_{k-1}$ satisfying $\int \phi_i d\lambda^* = a_i$ for $1\leq i\leq k-1$.

On the other hand, either in the case of \\
(i) $(a_1,\cdots,a_{k-1})\in \mathcal{S}_2$ or \\
(ii) $(a_1,\cdots,a_{k-1})\in \mathcal{S}_1$ and $a_k < g_2(a_1,\cdots,a_{k-1})$,
\begin{align*}
\lambda^*=\frac{1}{Z}e^{p_1\phi_1+\cdots+p_{k-1}\phi_{k-1}+p_k\phi_k} dx
\end{align*}
for $p_1,\cdots,p_{k-1},p_k$ satisfying $p_k<0$ and  $\int \phi_i d\lambda^* = a_i$ for $1\leq i\leq k$.
\end{theorem}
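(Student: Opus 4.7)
The plan is to derive Theorem \ref{theorem 1.0} directly from Theorem \ref{theorem 1.1} by a deterministic KKT/maximum-entropy analysis of the unique maximizer $\lambda^*$, using the convex-analytic structure of $H$ from Definition \ref{def}. Theorem \ref{theorem 1.1} already supplies $\lambda^*$ as the unique maximizer of the strictly concave functional $h(\cdot)$ over the convex set
\[
K := \Big\{\mu \in \mathcal{M}_1(\R^+) : \int \phi_i d\mu = a_i,\ 1\leq i\leq k-1,\ \int \phi_k d\mu \leq a_k \Big\},
\]
so the remaining task is purely deterministic: identify the explicit form of $\lambda^*$. I would split into two cases according to whether the inequality $\int \phi_k d\mu \leq a_k$ is active at $\lambda^*$, handle each by the classical maximum entropy principle (as used in Remark \ref{remark 2.2}), and then match the case split to the dichotomy stated in the theorem.

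In the inactive case $\int \phi_k d\lambda^* < a_k$, a convex-perturbation argument shows that $\lambda^*$ also maximizes $h$ over the relaxed set $K' := \{\mu : \int \phi_i d\mu = a_i,\ 1 \leq i \leq k-1\}$: for any $\nu \in K'$, the segment $(1-t)\lambda^* + t\nu$ lies in $K$ for small $t > 0$, and concavity of $h$ combined with $h((1-t)\lambda^* + t\nu) \leq h(\lambda^*)$ forces $h(\nu) \leq h(\lambda^*)$. The standard maximum entropy principle then yields $\lambda^* = \frac{1}{Z}e^{p_1\phi_1 + \cdots + p_{k-1}\phi_{k-1}}dx$ with $(p_1,\ldots,p_{k-1})$ determined by $\int \phi_i d\lambda^* = a_i$ for $i \leq k-1$. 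Existence of such parameters is precisely the condition $(a_1,\ldots,a_{k-1}) \in \mathcal{S}_1$, and by a subdifferential calculation the value $\int \phi_k d\lambda^*$ for this candidate equals the left endpoint $g_2(a_1,\ldots,a_{k-1})$ of $\pi_2(\partial H(p_1,\ldots,p_{k-1},0))$; feasibility then becomes $a_k \geq g_2(a_1,\ldots,a_{k-1})$. In the active case $\int \phi_k d\lambda^* = a_k$, the same principle applied with all $k$ equalities gives $\lambda^* = \frac{1}{Z}e^{p_1\phi_1 + \cdots + p_k\phi_k}dx$ with $p_k < 0$: indeed $p_k > 0$ destroys integrability by (C2)--(C3), while $p_k = 0$ would place us back in the inactive regime and force $a_k = g_2(a_1,\ldots,a_{k-1})$, contradicting both sub-cases (i) $(a_1,\ldots,a_{k-1}) \in \mathcal{S}_2$ and (ii) $a_k < g_2(a_1,\ldots,a_{k-1})$. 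Conversely, in those two sub-cases the inactive scenario is impossible (either no $\mathcal{S}_1$-candidate exists or the existing one violates the inequality), so the active case is forced.

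The main obstacle I anticipate is the subdifferential identification implicit in \eqref{def1}: one must show that $g_2(a_1,\ldots,a_{k-1}) = \inf \pi_2(\partial H(p_1,\ldots,p_{k-1},0))$ coincides with the $\phi_k$-moment of the inactive-case candidate. When $(p_1,\ldots,p_{k-1},0)$ lies in the interior of $\mathrm{dom}(H)$, $\partial H$ is a singleton and this is immediate; the subtle situation is when this point is on the boundary, i.e., $H(p_1,\ldots,p_{k-1},p_k) = +\infty$ for all $p_k > 0$ so $\partial H$ becomes a half-line in the $p_k$-direction. A monotone-convergence argument as $p_k \uparrow 0$, supported by the growth bound (C4), is needed to identify the left endpoint with $\int \phi_k d\lambda^*$. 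A secondary point is the existence of Lagrange multipliers in this infinite-dimensional optimization, which the openness of the admissible set $\mathcal{A}$ handles via a Slater-type qualification.
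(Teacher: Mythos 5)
Your reduction of the theorem to characterizing the unique maximizer of $h(\cdot)$ over the set \eqref{11} is exactly how the paper proceeds (Theorem \ref{theorem 1.1} combined with Proposition \ref{theorem 2.10}), and your active/inactive case split, the sign analysis of $p_k$, and your identification of the main obstacle (that the $(k-1)$-parameter candidate has $\phi_k$-moment equal to $g_2(a_1,\ldots,a_{k-1})$, which is the content of Lemma \ref{prop 2.9}, \eqref{291}--\eqref{292}) all track the paper's argument. The genuine gap is the step where you pass from \enquote{$\lambda^*$ maximizes $h$ subject to moment equalities} to \enquote{$\lambda^*$ is an exponential family member whose moments match}: you invoke \enquote{the standard maximum entropy principle}, but that principle gives only the converse implication (an exponential measure with matching moments is the unique maximizer); the direction you need is dual attainment, which is precisely what can fail for unbounded observables --- indeed the central phenomenon of this paper is that for the $k$-th constraint with $a_k>g_2(a_1,\ldots,a_{k-1})$ no such representation exists, so \enquote{standard} maximum entropy cannot be cited as a black box here. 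The paper instead constructs the candidate measures from the finite-dimensional convex duality between $H$ and $I$: $\partial I(a_1,\ldots,a_k)\neq\emptyset$ because $\mathcal{A}\subset \mathrm{int}\,\mathrm{dom}(I)$ and $I$ is essentially smooth; in cases (i) and (ii) one shows $p_k\neq 0$, hence $p_k<0$ (using the uniqueness of the multiplier from Remark \ref{remark 2.9} and the definition of $g_2$), which forces $H$ to be differentiable at the dual point and yields the moment identities; optimality is then verified by the elementary inequality $-h(\mu)=H(\mu\,|\,\nu)+\sum_i p_i\int\phi_i\,d\mu+C$ together with uniqueness of the maximizer. Your \enquote{Slater-type qualification from openness of $\mathcal{A}$} gestures at this duality but is not a proof in the infinite-dimensional primal formulation, and the same unproven attainment is hidden in your claim that \enquote{existence of such parameters is precisely $(a_1,\ldots,a_{k-1})\in\mathcal{S}_1$}: the implication from the subdifferential condition defining $\mathcal{S}_1$ to the moment identities \eqref{291} is itself a nontrivial dominated-convergence computation, not a definition.

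A secondary flaw: in your inactive-case perturbation argument, the segment $(1-t)\lambda^*+t\nu$ lies in $K$ only when $\int\phi_k\,d\nu<\infty$, so the argument yields maximality of $\lambda^*$ only over $K'\cap\{\mu:\int\phi_k\,d\mu<\infty\}$ rather than over all of $K'$. This is repairable (the $\mathcal{S}_1$-candidate has finite $\phi_k$-moment equal to $g_2(a_1,\ldots,a_{k-1})$, so comparing the two maximizers and invoking strict concavity closes the loop), but as written the relaxation is too generous. With these two points repaired --- most efficiently by adopting the paper's construct-and-verify order (build the exponential candidate via Lemma \ref{prop 2.9} and Legendre duality, then verify optimality by the relative-entropy identity) rather than arguing forward from the maximizer --- your case-matching logic, including the boundary case $a_k=g_2(a_1,\ldots,a_{k-1})$ and the exclusion of $p_k=0$ in cases (i) and (ii), goes through as you describe.
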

According to the Gibbs' principle, if each $\phi_i$ is bounded continuous, then the limiting law  is of the form
$\lambda^*=\frac{1}{Z}e^{p_1\phi_1+\cdots+p_k\phi_k} dx$
satisfying $\int \phi_i d\lambda^* = a_i$ for all $1\leq i\leq k$. Also, when the microcanonical ensemble is given by a single constraint \eqref{single}, even when $\phi$ is not bounded, one can prove a similar result (see Proposition \ref{prop 5.1}). However,  when the constraints are given by several unbounded observables satisfying  Assumption \ref{assume0}, Theorem \ref{theorem 1.0} demonstrates that one of the constraints may not contribute to the limiting distribution $\lambda^*$. We refer to Section \ref{section 5.2} and \ref{section 5.3} for some concrete examples.

Theorem \ref{theorem 1.0} also shows that the interesting phase transition phenomenon happens in the equivalence of ensembles viewpoint. Indeed, when $(a_1,\cdots,a_{k-1})\in \mathcal{S}_1$ and $a_k \geq g_2(a_1,\cdots,a_{k-1})$, the $k$-th constraint  $S^k_n = a_k$ becomes extraneous for a   limit of the finite marginal distributions of the microcanonical ensembles.  Since  $\lambda^*$ in Theorem \ref{theorem 1.0} satisfies $\int \phi_k d\lambda^* = g_2(a_1,\cdots,a_{k-1})$  (see Lemma \ref{prop 2.9}), it is plausible to guess that the discrepancy $a_k-g_2(a_1,\cdots,a_{k-1})$ corresponding to the $k$-th constraint gets concentrated on some  sites. We will rigorously elaborate on this point in  Theorem \ref{theorem 1.4}.

On the other hand, when $(a_1,\cdots,a_{k-1})\in \mathcal{S}_1$ and $a_k < g_2(a_1,\cdots,a_{k-1})$, in the equivalence of ensembles viewpoint Theorem \ref{theorem 1.0}, the microcanonical distributions \eqref{micro} behave in a standard way. In other words, as in the case when $\phi_i$'s are bounded, the limiting distribution $\lambda^*$  satisfies $\int \phi_i d\lambda^* = a_i$ for all $1\leq i\leq k$. From this, we can infer that no huge amount of the quantity can be concentrated on some  sites (see Theorem \ref{theorem 1.3} for the precise statement), unlike the  case $a_k \geq g_2(a_1,\cdots,a_{k-1})$. Another interesting point of Theorem \ref{theorem 1.0} is the case when  $(a_1,\cdots,a_{k-1})\in \mathcal{S}_2$: unlike the case $(a_1,\cdots,a_{k-1})\in \mathcal{S}_1$,  whatever $a_k$ is, the limiting distribution $\lambda^*$ satisfies $\int \phi_i d\lambda^* = a_i$ for all $1\leq i\leq k$.  This key difference of the sets $\mathcal{S}_1$ and $\mathcal{S}_2$ follows from Lemma \ref{prop 2.9}. 

Although Theorem \ref{theorem 1.0} provides the  equivalence of ensembles result and explains the interesting phase transition phenomenon, it does not  capture the localization phenomenon. In order to illustrate this, assume for a moment that in a thermodynamic limit, a huge amount of the quantity gets concentrated on a single site. It is obvious that the probability that this localized site is  the first coordinate of the configuration space is equal to $\frac{1}{n}$. Since $\frac{1}{n}$ converges to zero as  $n\rightarrow \infty$, this localization phenomenon is not reflected in the  statement:
\begin{align*}
\lim_{\delta \rightarrow 0} \lim_{n \rightarrow \infty} \mathbb{P}(X_1\in \cdot \ | \ C^\delta_n) = \lambda^*.
\end{align*}

Therefore, the  localization and delocalization phenomena can  provide the supplementary information about the microcanonical ensembles. We study this phenomenon by obtaining a large deviation result for the maximum component. In fact, we can analyze much finer structures of \eqref{micro} by establishing a large deviation result for the joint law of empirical distributions $L_n := \frac{1}{n}(\delta_{X_1}+\cdots+\delta_{X_n})$ and the maximum component $M_n := \max_{1\leq i\leq n} \frac{\phi_k(X_i)}{n}$ under the microcanonical distribution \eqref{micro}. 
\begin{theorem} \label{theorem 1.2}
For any Borel set $A$ in  $\mathcal{M}_1(\R^+) \times \R^+$, 
\begin{multline*}
 -\inf_{(\mu,z)\in A^{\mathrm{o}}} J^{max}(\mu, z)\leq  \liminf_{\delta \rightarrow 0} \liminf_{n \rightarrow \infty} \frac{1}{n}\log \mathbb{P}((L_n, M_n)\in A^{\mathrm{o}}|C_n^\delta) \\ \leq  \limsup_{\delta \rightarrow 0} \limsup_{n \rightarrow \infty} \frac{1}{n}\log \mathbb{P}((L_n, M_n)\in \bar{A}|C_n^\delta) \leq -\inf_{(\mu,z)\in \bar{A}} J^{max}(\mu, z),
\end{multline*}
with the rate function $J^{max}(\mu,z)$ given by
\begin{align*}
&J^{max}(\mu,z) \\
=&\begin{cases} -h(\mu) - K(a_1,\cdots,a_k) &\text{if}\ \int \phi_1d\mu=a_1, \cdots, \int \phi_{k-1}d\mu=a_{k-1}, \int \phi_kd\mu \leq a_k-z ,\\
\infty &\text{otherwise}. \end{cases}
\end{align*}
Here,  $K(a_1,\cdots,a_k)$ is defined by
\begin{align*}
K(a_1,\cdots,a_k) = \inf_{\mu\in \mathcal{M}_1(\R^+)} \Big\{-h(\mu)\ \Big \vert \int \phi_1d\mu=a_1,\cdots,\int \phi_{k-1}d\mu=a_{k-1},\int \phi_kd\mu\leq a_k \Big\}.
\end{align*}
\end{theorem}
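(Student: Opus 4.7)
The plan is to derive Theorem~\ref{theorem 1.2} by first establishing an unconditional large deviation principle for the enriched vector $(L_n, M_n, S_n^1, \ldots, S_n^k)$ under the product measure $\mathbb{P}$, and then conditioning on $C_n^\delta$. The key observation that makes this feasible is that the cost of creating a single extreme site is negligible at speed $n$: the tail of $\lambda$ obeys $\log \lambda(\phi_k(X) \geq nz) \asymp -\phi_1(\phi_k^{-1}(nz))$, and condition (C3) forces $\phi_1 \circ \phi_k^{-1}$ to grow strictly slower than linearly in its argument, so $n^{-1} \log \mathbb{P}(M_n \approx z) \to 0$ for every $z > 0$.

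Combining this with Sanov's theorem for $L_n$ (already used in Section~\ref{section 3} to handle $(L_n, S_n^1, \ldots, S_n^k)$), I expect a joint LDP at speed $n$ for $(L_n, M_n, S_n^1, \ldots, S_n^k)$ with rate $H(\mu \vert \lambda)$ on a feasible set, $+\infty$ elsewhere. The feasible set is: $s_i = \int \phi_i \, d\mu$ for $i<k$, and $s_k \geq \int \phi_k \, d\mu + z$. Equality for $i<k$ follows because any single extreme site contributes only $O(n^{1/\kappa^{k-i}-1}) \to 0$ to $S_n^i$ by (C3), while bulk uniform integrability of $\phi_i$ on the event $\{\int \phi_k\,dL_n < \infty\}$ promotes the weak convergence $L_n \to \mu$ into convergence of $S_n^i$. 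The inequality for $s_k$ captures that the max site alone contributes $\approx z$ on top of the bulk mass $\int \phi_k\,d\mu$, while further almost-extreme sites may leak additional $\phi_k$-mass without raising $M_n$.

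Conditioning on $C_n^\delta$ gives
\begin{equation*}
\mathbb{P}((L_n, M_n) \in A \mid C_n^\delta) = \frac{\mathbb{P}((L_n, M_n) \in A,\, C_n^\delta)}{\mathbb{P}(C_n^\delta)},
\end{equation*}
and applying the joint LDP to both numerator and denominator, followed by $\delta \to 0$, yields a rate equal to the difference of two infima of $H(\mu \vert \lambda)$. Using the identity $H(\mu \vert \lambda) = -h(\mu) + \int \phi_1 \, d\mu + \log Z = -h(\mu) + a_1 + \log Z$ valid on the constraint set, the additive constant $a_1 + \log Z$ cancels between numerator and denominator, leaving the claimed rate $J^{max}(\mu,z) = -h(\mu) - K(a_1,\ldots,a_k)$ on $\{\int \phi_i\,d\mu = a_i,\ i<k,\ \int \phi_k\,d\mu \leq a_k - z\}$.

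The main obstacle will be the rigorous handling of the unbounded observable $\phi_k$ when decoupling bulk from extreme contributions, since $\mu \mapsto \int \phi_k\,d\mu$ is only lower semi-continuous in the weak topology. The approach will be to truncate $\phi_k$ at level $N$, establish an LDP for the bounded surrogate where $\mu \mapsto \int (\phi_k \wedge N)\, d\mu$ is continuous, and pass $N \to \infty$ using exponential tightness. A delicate point for the LDP lower bound is the realisability of every $(\mu,z)$ in the feasible set: one must explicitly construct configurations with prescribed bulk measure $\mu$ and a designated max at $\approx nz$ whose combined contributions to the empirical means match (to within $\delta$) the target values $a_i$. A secondary issue is exchanging the limits $n \to \infty$ and $\delta \to 0$ in the conditional probability, which should reduce to continuity properties of $K(a_1,\ldots,a_k)$ on the admissible set $\mathcal{A}$.
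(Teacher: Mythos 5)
Your proposal is correct in outline and rests on the same two pillars as the paper's argument --- the weak LDP for $(L_n,S^1_n,\ldots,S^k_n)$ (Theorem \ref{theorem 2.2}) and the fact that planting finitely many macroscopic sites costs only $e^{o(n)}$ (Lemma \ref{lemma 3.1}, which your tail computation $\log\lambda(\phi_k\geq nz)\asymp-\phi_1(\phi_k^{-1}(nz))=o(n)$ reproduces) --- but it is organized differently. The paper never states an unconditional LDP for the enriched vector $(L_n,M_n,S^1_n,\ldots,S^k_n)$: it proves the conditional LDP for $(L_n,M_n)$ given $C^\delta_n$ directly (Theorem \ref{theorem 3.1}), reducing the upper bound by exchangeability to the event that the first coordinate realizes the maximum, after which the bulk weak LDP applies to the remaining coordinates, and building the lower bound by an explicit planted configuration; Theorem \ref{theorem 1.2} is then just the rewriting $H(\mu|\lambda)=-h(\mu)+a_1+C$. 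Your route through an intermediate unconditional joint LDP, followed by formal conditioning and the DZ-type limit exchange in $\delta$, is viable and conceptually clean, and your description of the feasible set --- including the possibility that several almost-extreme sites leak extra $\phi_k$-mass without raising $M_n$ --- is exactly right.

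Two cautions where your plan must in effect reproduce the paper's argument. First, truncation cannot be used as an exponentially good approximation: $S^k_n-\int(\phi_k\wedge N)\,dL_n$ exceeds $\eta$ with probability at least $\mathbb{P}(\phi_k(X_1)>\eta n)=e^{-o(n)}$, which is not superexponentially small (this is the very phenomenon under study), so "pass $N\to\infty$ using exponential tightness" fails in its strong reading; truncated observables can only serve as test functions certifying the constraints in the upper bound (as in Lemma \ref{lemma 2.4}), while the LDP itself should come from the abstract Cram\'er-type theorem as in Theorem \ref{theorem 2.2}. Moreover only a weak LDP is available, since $S^k_n$ is not exponentially tight --- harmless after conditioning because $C^\delta_n$ confines $(S^1_n,\ldots,S^k_n)$, hence $M_n\leq S^k_n$, to a compact set. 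Second, in the lower bound, when $a_k-\int\phi_k\,d\mu>z$ a single designated maximum does not suffice: you need several planted sites each of $\phi_k$-value at most $\approx zn$ (the paper's $j-1$ sites at height $z$ plus one at height $w$), and you must also rule out that a stray bulk site exceeds the designated maximum; the paper achieves this via the quantitative lower-semicontinuity radius $r_0$ in \eqref{cond} combined with the total $\phi_k$-budget, showing the offending event has rate $-\infty$ (step \eqref{31000}). Neither point invalidates your approach, but both must be carried out for the proof to close.
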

Theorem  \ref{theorem 1.2} provides  fine structures of the microcanonical ensembles since it offers the limit behaviors of the joint law of empirical distributions and the maximum component. In particular,  one can systematically analyze the  localization and delocalization phenomena of the  microcanonical ensembles  using the large deviation result  Theorem \ref{theorem 1.2}. 
First, one can prove the following delocalization result:

\begin{theorem} \label{theorem 1.3}
 Fix any $\epsilon>0$. Then, either in the case of \\
(i) $(a_1,\cdots,a_{k-1})\in \mathcal{S}_2$ or \\
(ii)$(a_1,\cdots,a_{k-1})\in \mathcal{S}_1$ and $a_k\leq g_2(a_1,\cdots,a_{k-1})$, 
\begin{align} \label{delocal0}
\limsup_{\delta \rightarrow 0} &\limsup_{n\rightarrow \infty} \frac{1}{n}\log \mathbb{P}(M_n \geq \epsilon \ | \  C^\delta_n) < 0.
\end{align}
In particular, localization does not happen in the sense that
\begin{align} \label{delocal}
\lim_{\delta \rightarrow 0} \lim_{n\rightarrow \infty} \mathbb{P}(M_n < \epsilon \ | \ C_n^\delta) = 1.
\end{align}
On the other hand, in the case of $(a_1,\cdots,a_{k-1})\in \mathcal{S}_1$ and $a_k> g_2(a_1,\cdots,a_{k-1})$,  we have the upper tail estimate for the maximum component:
\begin{align} \label{upper}
\limsup_{\delta \rightarrow 0} &\limsup_{n\rightarrow \infty} \frac{1}{n}\log \mathbb{P}(M_n \geq a_k-g_2(a_1,\cdots,a_{k-1})+\epsilon \ | \ C^\delta_n) < 0.
\end{align}
In particular, the maximum component cannot be too large in the sense  that
\begin{align*} 
\limsup_{\delta \rightarrow 0} &\limsup_{n\rightarrow \infty}  \mathbb{P}(M_n < a_k-g_2(a_1,\cdots,a_{k-1})+\epsilon \ | \ C^\delta_n) =1.
\end{align*}
\end{theorem}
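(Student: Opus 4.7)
Both assertions will follow from the large deviation upper bound in Theorem~\ref{theorem 1.2} applied to the closed sets $F_t:=\{(\mu,z)\in\mathcal{M}_1(\R^+)\times\R^+:z\geq t\}$. That bound reads
\[
\limsup_{\delta\to 0}\limsup_{n\to\infty}\tfrac{1}{n}\log\mathbb{P}(M_n\geq t\mid C_n^\delta)\;\leq\; -\inf_{F_t}J^{max},
\]
so the whole problem reduces to showing that $\inf_{F_t}J^{max}>0$ at the thresholds $t=\epsilon$ in cases (i)--(ii) and $t=a_k-g_2(a_1,\dots,a_{k-1})+\epsilon$ in case (iii).

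\textbf{Reducing to an entropy comparison.} Writing $K(a_1,\dots,a_k)=-h(\lambda^*)$ for the entropy maximizer $\lambda^*$ of Theorem~\ref{theorem 1.1}, the rate function becomes $J^{max}(\mu,z)=h(\lambda^*)-h(\mu)$ on its effective domain. For fixed $t$, the map $z\mapsto\sup\{h(\mu):\int\phi_i\,d\mu=a_i\ (i<k),\ \int\phi_k\,d\mu\leq a_k-z\}$ is non-increasing in $z$, so the infimum over $F_t$ is attained at $z=t$. Denoting by $\lambda^*_t$ the corresponding maximizer (existence addressed below), we obtain $\inf_{F_t}J^{max}=h(\lambda^*)-h(\lambda^*_t)$, and the task becomes to show $h(\lambda^*_t)<h(\lambda^*)$ strictly.

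\textbf{Carrying out the comparison.} In cases (i)--(ii) Theorem~\ref{theorem 1.0} tells us that $\lambda^*$ has the exponential form with a strictly negative multiplier $p_k$, so $\int\phi_k\,d\lambda^*=a_k$. Consequently $\lambda^*$ violates the tightened constraint $\int\phi_k\,d\mu\leq a_k-\epsilon$ defining $\lambda^*_\epsilon$. Because $h$ is strictly concave on the convex set of probability densities and $\lambda^*$ is the unique maximizer over the larger admissible set (the uniqueness clause of Theorem~\ref{theorem 1.1}), it follows that $h(\lambda^*_\epsilon)<h(\lambda^*)$, proving \eqref{delocal0}; then \eqref{delocal} is immediate by taking complements. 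In case (iii), Theorem~\ref{theorem 1.0} together with Lemma~\ref{prop 2.9} yields $\int\phi_k\,d\lambda^*=g_2(a_1,\dots,a_{k-1})<a_k$, so $\lambda^*$ is itself admissible for $\lambda^*_t$ precisely when $t\leq a_k-g_2$; at $t=a_k-g_2+\epsilon$ the constraint becomes $\int\phi_k\,d\mu\leq g_2-\epsilon$, which excludes $\lambda^*$, and the identical strict-concavity argument delivers $h(\lambda^*_t)<h(\lambda^*)$, proving \eqref{upper}.

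\textbf{Main obstacle.} The non-trivial analytic ingredient is ensuring that $\lambda^*_t$ actually exists and that the strict entropy drop can be certified. Existence requires sequential compactness of the tighter constraint set together with upper semicontinuity of $h$, for which Assumption~\ref{assume0}(C3) furnishes the uniform integrability of $\phi_{k-1}$-moments needed to prevent escape of mass to infinity. A secondary delicate point is the boundary $a_k=g_2(a_1,\dots,a_{k-1})$ in case (ii), which is handled by perturbing $a_k$ slightly downward and invoking continuity of $t\mapsto h(\lambda^*_t)$ to pass to the limit.
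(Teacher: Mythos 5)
Your proposal is correct, and it starts exactly where the paper does: the conditional LDP of Theorem \ref{theorem 1.2} applied to the closed sets $\{z\ge t\}$, so that everything reduces to showing the rate infimum is strictly positive at $t=\epsilon$ (cases (i)--(ii)) and at $t=a_k-g_2(a_1,\dots,a_{k-1})+\epsilon$ (case (iii)). Where you diverge is in how that strict positivity is certified. The paper projects onto the empirical means and invokes Proposition \ref{prop 2.7}: $I(a_1,\dots,a_{k-1},\cdot)$ is strictly decreasing below $g_2(a_1,\dots,a_{k-1})$ (and on the whole admissible range when $(a_1,\dots,a_{k-1})\in\mathcal{S}_2$) and constant on $[g_2,\infty)$, so $\inf_{w\le a_k-\epsilon} I(a_1,\dots,a_{k-1},w)>I(a_1,\dots,a_k)$ in cases (i)--(ii) and $\inf_{w\le g_2-\epsilon} I(a_1,\dots,a_{k-1},w)>I(a_1,\dots,a_k)$ in case (iii). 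You instead keep everything at the level of the entropy variational problem: the rate infimum equals $h(\lambda^*)-\sup\{h(\mu)\}$ over the tightened constraint $\int\phi_k\,d\mu\le a_k-t$, and the strict drop follows from the uniqueness clause of Theorem \ref{theorem 1.1} together with attainment of the tightened supremum (closedness and compactness as in Lemma \ref{lemma 2.4} plus lower semicontinuity of $H(\cdot\,|\,\lambda)$), once Theorem \ref{theorem 1.0} and Lemma \ref{prop 2.9} tell you that $\int\phi_k\,d\lambda^*=a_k$ in cases (i)--(ii) and $=g_2(a_1,\dots,a_{k-1})<a_k$ in case (iii). This is a legitimate and arguably more elementary substitute for Proposition \ref{prop 2.7}; what the paper's route buys is that the same monotonicity structure of $I$ is reused verbatim for \eqref{lower2} and in the proof of Theorem \ref{theorem 3.6}, whereas your route avoids the convex-duality bookkeeping altogether.

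Two small points. First, your justification in cases (i)--(ii) ("$\lambda^*$ has a strictly negative multiplier $p_k$") fails precisely at the boundary $a_k=g_2(a_1,\dots,a_{k-1})$ of case (ii), where Theorem \ref{theorem 1.0} gives $\lambda^*$ of the form without $\phi_k$; but Lemma \ref{prop 2.9} still yields $\int\phi_k\,d\lambda^*=g_2=a_k$, so $\lambda^*$ is excluded from the tightened set and your uniqueness argument applies verbatim. The perturbation-plus-continuity fix you sketch for this boundary is therefore unnecessary, and as stated it would not suffice, since passing to the limit in $t\mapsto h(\lambda^*_t)$ only preserves non-strict inequalities. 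Second, you should record the degenerate possibility that the tightened constraint set is empty (e.g. when $a_k-t<g_1(a_1,\dots,a_{k-1})$), in which case the rate infimum is $+\infty$ and the bound is trivial; with that remark, the attainment you need is exactly what Lemma \ref{lemma 2.4} provides.
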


Theorem \ref{theorem 1.3} claims that for certain values of $(a_1,\cdots,a_k)$ (condition (i) or (ii) in Theorem \ref{theorem 1.3}), delocalization happens in the sense that \eqref{delocal} holds. On the other hand, when $(a_1,\cdots,a_{k-1})\in \mathcal{S}_1$ and $a_k> g_2(a_1,\cdots,a_{k-1})$, as we predicted before, it is plausible to expect that the localization phenomenon happens. Since Theorem \ref{theorem 1.3} provides  the upper tail estimate \eqref{upper} for the maximum component, if we have an analogous lower tail estimate:
\begin{align}  \label{lower}
\limsup_{\delta \rightarrow 0} \limsup_{n\rightarrow \infty} \frac{1}{n}\log \mathbb{P}(M_n \leq a_k-g_2(a_1,\cdots,a_{k-1})-\epsilon \ | \ C^\delta_n)<0,
\end{align}
then  we can deduce that  $M_n$ approximates to $a_k- g_2(a_1,\cdots,a_{k-1})$ as $n\rightarrow \infty$ followed by $\delta \rightarrow 0$, which implies the localization phenomenon.  Unfortunately, using  the large deviation result Theorem \ref{theorem 1.2}, one can check that
\eqref{lower} is false in general: indeed, \begin{align} \label{lower2}
\lim_{\delta \rightarrow 0} \lim_{n\rightarrow \infty} \frac{1}{n}\log \mathbb{P}(M_n \leq a_k-g_2(a_1,\cdots,a_{k-1})-\epsilon \ | \ C^\delta_n) = 0
\end{align} (see Section \ref{section 4.2} for the explanations). Therefore, in order to obtain the lower tail estimate of type \eqref{lower}, we need to scale down the scaling factor $n$. Remarkably, it turns out that unlike the upper tail estimate \eqref{upper} or the delocalization estimate \eqref{delocal0}, the correct scaling factor in \eqref{lower} highly depends on the detailed structures of the functions $\phi_i$'s: 
\begin{align}  \label{lower1}
\limsup_{\delta \rightarrow 0} \limsup_{n\rightarrow \infty} \frac{1}{g(n)}\log \mathbb{P}(M_n \leq a_k-g_2(a_1,\cdots,a_{k-1})-\epsilon \ | \ C^\delta_n)<0,
\end{align}
for some function $g$ heavily relying on $\phi_i$'s.
 Also, since the scaling factor $g(n)$  grows slowly than $n$, unlike the estimate \eqref{delocal0} or \eqref{upper}, the left hand side of the lower tail estimate \eqref{lower1}  is sensitive to the  particular choice of the reference measure of the form
 \begin{align*}
 \big(\frac{1}{Z}e^{p_1\phi_1+\cdots+p_k\phi_k} dx\big)^{\otimes \mathbb{N}}
\end{align*}  due to the following Remark \ref{remark 3.2}:

\begin{remark} \label{remark 3.2}
We have developed theories under the particular reference measure $\mathbb{P}=\lambda^{\otimes \mathbb{N}}$ with  $\lambda$ given by \eqref{basic} since it is a probability measure  and once conditioned on the constraint $C^\delta_n$, it behaves like the uniform distribution, which is of our main interest. In order to explain this  rigorously, let us consider the probability measure $\nu$ on $(0,\infty)$ given by
\begin{align} \label{320}
\nu = \frac{1}{Z}e^{p_1\phi_1+\cdots+p_k\phi_k} dx.
\end{align}
Then, one can check that for any $n\in \mathbb{N}$, $\delta>0$, and Borel set $A$ in $(\R^+)^n$, 
\begin{align} \label{eq}
e^{-2n(|p_1|+\cdots+|p_k|)\delta}\frac{\text{Leb}(A\cap C^{\delta}_n)}{\text{Leb}(C^{\delta}_n)}\leq \nu^{\otimes n}(A | C^{\delta}_n) \leq e^{2n(|p_1|+\cdots+|p_k|)\delta} \frac{\text{Leb}(A\cap C^{\delta}_n)}{\text{Leb}(C^{\delta}_n)}.
\end{align}
Thus, for any probability measure $\nu$ of the form \eqref{320},
\begin{align} \label{321}
 \limsup_{\delta \rightarrow 0} \limsup_{n\rightarrow \infty} &\frac{1}{n}\log \nu^{\otimes \mathbb{N}}(A|C_n^\delta) = \limsup_{\delta \rightarrow 0} \limsup_{n\rightarrow \infty} \frac{1}{n}\log (\text{Leb})^{\otimes \mathbb{N}}(A|C_n^\delta),
\end{align}
and
\begin{align*}
\liminf_{\delta \rightarrow 0} \liminf_{n\rightarrow \infty} &\frac{1}{n}\log \nu^{\otimes \mathbb{N}} (A|C_n^\delta) =\liminf_{\delta \rightarrow 0} \liminf_{n\rightarrow \infty} \frac{1}{n}\log (\text{Leb})^{\otimes \mathbb{N}}(A|C_n^\delta).
\end{align*}
This implies that Theorem \ref{theorem 1.2} and  \ref{theorem 1.3} hold under  general reference measures of type \eqref{320}, particularly under the uniform distribution  which is of our main interest. Also, the limiting law of the finite marginal distributions of \eqref{micro}  are identical under any reference measures of type \eqref{320} (see Remark \ref{uniform}).

On the other hand, the lower tail estimate of type  \eqref{lower1} depends on the particular choice of the reference measure \eqref{320}. This is because the scaling factor $g(n)$ in \eqref{lower1} grows slower than $n$ at infinity.  In fact, if we switch the reference measure from $(\nu_1)^{\otimes \mathbb{N}}$ to $(\nu_2)^{\otimes \mathbb{N}}$ for $\nu_1$ and $\nu_2$ of the form \eqref{320}, then the cost arising from  this change is  $\mathcal{O}(e^{Cn\delta})$ in the sense that for some constant $C$,
\begin{align*}
e^{-nC\delta}\nu_2^{\otimes n}(A | C^{\delta}_n)\leq \nu_1^{\otimes n}(A | C^{\delta}_n) \leq e^{nC\delta} \nu_2^{\otimes n}(A | C^{\delta}_n).
\end{align*}
 Since the scaling factor $g(n)$ grows slowly than $n$ at infinity, for any fixed $\delta>0$,
\begin{align*}
\lim_{n\rightarrow \infty} \frac{1}{g(n)}\log(e^{Cn\delta}) = \infty.
\end{align*}
This implies that the left hand side of the lower tail estimate \eqref{lower1}  is sensitive to the particular choice of the reference measure of the form \eqref{320}.
\end{remark}

Now, let us study the localization phenomenon by establishing the  lower tail estimate \eqref{lower1}.  Since we have already proved   in Theorem \ref{theorem 1.3} that localization does not happen when  $(a_1,\cdots,a_{k-1})\in \mathcal{S}_2$, we only consider the case  $(a_1,\cdots,a_{k-1})\in \mathcal{S}_1$. Then, one can choose a (unique) probability measure $\nu$ on $(0,\infty)$ of the form
\begin{align} \label{general}
\nu = \frac{1}{Z}e^{p_1\phi_1+\cdots+p_{k-1}\phi_{k-1}} dx
\end{align}
($Z$ is a normalizing constant) satisfying
\begin{align*}
\int \phi_1 d\nu = a_1,\cdots,\int \phi_{k-1} d\nu=a_{k-1}
\end{align*}
(see Lemma \ref{prop 2.9} for the explanations). Note that  $\nu=\lambda^*$, which is the limiting distribution in Theorem \ref{theorem 1.0} when $a_k\geq g_2(a_1,\cdots,a_{k-1})$, satisfies this condition. Let us denote $1\leq m\leq k-1$ by the largest index such that $p_m \neq 0$. As explained in Remark \ref{remark 3.2},  the lower tail estimate \eqref{lower1} depends on the particular choice of the reference measure of form \eqref{320}, and we will establish it under the reference measure $\mathbb{Q} := \nu^{\otimes \mathbb{N}}$.

The reason why we consider such reference measure  to establish the lower tail estimate \eqref{lower1} is as follows. For the probability measure $\mu$ of the form  \eqref{general}, let us denote $I^\mu$ by  the (weak) large deviation rate function for the sequence $(S^1_n,\cdots,S^k_n)$ under  $\mu^{\otimes \mathbb{N}}$. Then,  when $(a_1,\cdots,a_{k-1})\in \mathcal{S}_1$ and $a_k> g_2(a_1,\cdots,a_{k-1})$, due to the estimate \eqref{lower2} and Remark \ref{remark 3.2},  
\begin{gather*}
 \mu^{\otimes \mathbb{N}} (C_n^{\delta})=e^{-nI^\mu(a_1,\cdots,a_k) + r_1(n,\delta)},\\
\mu^{\otimes \mathbb{N}} (\{M_n<a_k-g_2(a_1,\cdots,a_{k-1})-\epsilon \} \cap  C_n^{\delta})=e^{-nI^\mu(a_1,\cdots,a_k) + r_2(n,\delta)}
\end{gather*}
for $r_1(n,\delta)$, $r_2(n,\delta)$ satisfying  
$\lim_{\delta \rightarrow 0} \lim_{n\rightarrow \infty} \frac{r_i(n,\delta)}{n} = 0$ for $i=1,2$.  In order to establish the lower tail estimate of type \eqref{lower1}, we need to analyze  the lower order terms $r_1(n,\delta)$, $r_2(n,\delta)$ since the scaling factor $g(n)$ grows slowly than $n$. Since  the standard large deviation result does not reveal the finer behavior of $r_1(n,\delta)$ and $r_2(n,\delta)$, in order to capture  this detailed structure we choose a probability measure $\mu$ such that $I^\mu(a_1,\cdots,a_k)=0$. Since the probability measure $\nu$ chosen above satisfies $\int \phi_k d\nu = g_2(a_1,\cdots,a_{k-1})$ (see Lemma \ref{prop 2.9}), according to the law of large numbers and the estimate \eqref{301} in Lemma \ref{lemma 3.1},  $I^\nu(a_1,\cdots,a_k) = 0$ whenever $a_k>g_2(a_1,\cdots,a_{k-1})$.

As mentioned before, unlike the  upper tail estimate \eqref{upper} or the delocalization estimate \eqref{delocal0}, the  scaling factor in the lower tail estimate \eqref{lower1} heavily relies on the structures of  functions $\phi_i$'s in a complicated way. Roughly speaking,  for a large class of functions $\phi_i$'s satisfying some technical conditions, the lower tail estimate \eqref{lower1} holds with the scaling factor $g(n):=(\phi_m \circ \phi_k^{-1})(n)$.  We prove this in the particular case when $g(n)$ grows as $n^\gamma$ ($0<\gamma<1$)  for the following two reasons: first of all, we try to keep arguments as simple as possible in order to separate the key ideas of the proof from  technical details. Secondly, when $\phi_i$'s are polynomials, which is of our main interest due to its broad applications in geometry and PDE theory,  $g(n) \approx n^\gamma$ for some $0<\gamma<1$. The following theorem provides the lower tail estimate for the maximum component, and describes the localization phenomenon:

\begin{theorem} \label{theorem 1.4}
Suppose that $(a_1,\cdots,a_{k-1})\in \mathcal{S}_1$ and $a_k>g_2(a_1,\cdots,a_{k-1})$. Assume further that there exist $0<\gamma_1,\cdots,\gamma_{k-1}<1$ such that for each $1\leq i\leq k-1$,
\begin{align} \label{assume2}
 \lim_{x\rightarrow \infty} \frac{(\phi_i\circ \phi_k^{-1})(x) }{x^{\gamma_i}}=1.
\end{align} If the reference measure $\mathbb{Q}$ and the index $m$ are chosen as above, then for any $\epsilon>0$,
\begin{align} \label{141}
\limsup_{\delta \rightarrow 0} \limsup_{n\rightarrow \infty} \frac{1}{n^{\gamma_m}}\log \mathbb{Q}(M_n<a_k-g_2(a_1,\cdots,a_{k-1})-\epsilon \ | \ C_n^{\delta})  <0.
\end{align}
In particular, localization happens in the sense that for any $\epsilon>0$,
\begin{align} \label{142}
\lim_{\delta \rightarrow 0}\lim_{n\rightarrow \infty} \mathbb{Q}(|M_n-(a_k-g_2(a_1,\cdots,a_{k-1}))|<\epsilon \ | \ C_n^{\delta})=1.
\end{align}
\end{theorem}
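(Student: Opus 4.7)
The plan is to establish \eqref{141} by producing matched asymptotics at the scale $n^{\gamma_m}$ for the numerator and denominator of the conditional probability. Abbreviate $g_2 := g_2(a_1,\ldots,a_{k-1})$ and set
\[
c_{\mathrm{loc}} := |p_m|(a_k - g_2)^{\gamma_m}, \qquad c_{\mathrm{noloc}} := |p_m|\bigl[(a_k - g_2 - \epsilon)^{\gamma_m} + \epsilon^{\gamma_m}\bigr].
\]
Since $y\mapsto y^{\gamma_m}$ is strictly subadditive on $(0,\infty)$ for $\gamma_m\in(0,1)$, we have $c_{\mathrm{noloc}} > c_{\mathrm{loc}}$. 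I will prove the two half-estimates $\liminf_n n^{-\gamma_m}\log \mathbb{Q}(C_n^\delta)\geq -c_{\mathrm{loc}}(1+o_\delta(1))$ and $\limsup_n n^{-\gamma_m}\log \mathbb{Q}(\{M_n<a_k-g_2-\epsilon\}\cap C_n^\delta)\leq -c_{\mathrm{noloc}}(1-o_\delta(1))$; dividing and letting $\delta\to 0$ yields \eqref{141}, while \eqref{142} follows by combining \eqref{141} with the upper tail \eqref{upper} of Theorem \ref{theorem 1.3}.

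\emph{Lower bound via a single spike.} Choose $\eta>0$ with $\eta(a_k-g_2)<\delta/2$ and put $I_n := \phi_k^{-1}\bigl([(a_k-g_2)(1-\eta)n,(a_k-g_2)(1+\eta)n]\bigr)$. Let $G_n$ be the event that the empirical averages of $(X_2,\ldots,X_n)$ against $\phi_i$ lie within $\delta/2$ of $a_i$ for $i<k$ and of $g_2$ for $i=k$. Since $\int\phi_i\, d\nu=a_i$ ($i<k$) and $\int\phi_k\, d\nu=g_2$ by Lemma \ref{prop 2.9}, the law of large numbers gives $\mathbb{Q}(G_n)\to 1$. On $\{X_1\in I_n\}\cap G_n$, the contributions $\phi_i(X_1)/n\lesssim n^{\gamma_i-1}\to 0$ for $i<k$ by \eqref{assume2}, while $\phi_k(X_1)/n + \tfrac{n-1}{n}S_{n-1}^k \approx (a_k-g_2)+g_2 = a_k$, so $\{X_1\in I_n\}\cap G_n\subset C_n^\delta$ for $n$ large. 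Independence gives $\mathbb{Q}(\{X_1\in I_n\}\cap G_n) = \nu(I_n)\,\mathbb{Q}(G_n)$, and $\nu(I_n)$ is estimated directly from $d\nu = Z^{-1}\exp\bigl(\sum_{i<k}p_i\phi_i\bigr)dx$: after substituting $y=\phi_k(x)$, using (C4) to bound the Jacobian $\phi_k'$ polynomially, and using $\sum_{i<k}p_i\phi_i\circ\phi_k^{-1}(y) = -|p_m|y^{\gamma_m}(1+o(1))$ (from \eqref{assume2} and the definition of $m$), one obtains $\log\nu(I_n) = -c_{\mathrm{loc}}n^{\gamma_m}(1+o(1))$.

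\emph{Upper bound by forbidding the single jump.} Fix a small threshold $\alpha_0\in(0,\epsilon)$ and classify indices by $L := \{i: \phi_k(X_i)/n > \alpha_0\}$. On our event, each $i\in L$ satisfies $\alpha_0<\phi_k(X_i)/n\le a_k-g_2-\epsilon$, and $\sum_i\phi_k(X_i)/n\le a_k+\delta$ forces $|L|\le(a_k+\delta)/\alpha_0=:J_{\max}$. Union-bound over $|L|=J\le J_{\max}$, positions, and a finite grid $\mathcal{G}_J\subset(\alpha_0,a_k-g_2-\epsilon]^J$ of mesh $O(\delta)$:
\[
\mathbb{Q}\bigl(\{M_n<a_k-g_2-\epsilon\}\cap C_n^{\delta}\bigr) \leq \sum_{J=1}^{J_{\max}}\binom{n}{J}\sum_{\vec{\alpha}\in\mathcal{G}_J}\Bigl[\prod_{j=1}^{J}\nu\bigl(\phi_k(X)/n\in \mathcal{C}(\alpha_j)\bigr)\Bigr]R_n(\vec{\alpha}),
\]
where $\mathcal{C}(\alpha_j)$ denotes a mesh cell around $\alpha_j$ and $R_n(\vec\alpha)$ is the conditional probability that the bulk variables realize the remaining constraints. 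The per-cell factor is $\le\exp(-|p_m|(\alpha_j n)^{\gamma_m}(1-o(1)))$ uniformly in $\alpha_j\in[\alpha_0,a_k-g_2-\epsilon]$ by the same density computation as in the spike lower bound. The bulk factor $R_n(\vec\alpha)$ is $\le 1$ when $\sum\alpha_j$ lies within $O(\delta)$ of $a_k-g_2$, and otherwise is $\le e^{-\Theta(n)}$ by classical Cram\'er on bounded bulk variables (using $\mathrm{Var}_\nu(\phi_k)<\infty$, which follows from Assumption \ref{assume0}); the latter case is absorbable into $e^{-c_{\mathrm{noloc}}n^{\gamma_m}}$. After folding the polynomial prefactors $\binom{n}{J}|\mathcal{G}_J|$ into $e^{o(n^{\gamma_m})}$, the exponent is controlled by $-|p_m|n^{\gamma_m}\inf_{\vec\alpha}\sum_j\alpha_j^{\gamma_m}$ subject to $\sum\alpha_j=a_k-g_2$ and $\alpha_j\in(0,a_k-g_2-\epsilon]$. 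By concavity of $y\mapsto y^{\gamma_m}$, the infimum is attained at the extreme point $J=2$, $(\alpha_1,\alpha_2)=(a_k-g_2-\epsilon,\epsilon)$, with value $c_{\mathrm{noloc}}/|p_m|$.

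\emph{Main obstacle.} The delicate step is the uniform two-sided estimate $\log\nu(\phi_k(X)/n\in[\alpha,\alpha+\Delta])=-|p_m|(\alpha n)^{\gamma_m}(1+o(1))$ over $\alpha\in[\alpha_0,a_k-g_2-\epsilon]$ for a suitable mesh $\Delta=\Delta(n)\downarrow 0$, which powers both halves of the argument: it requires a careful Laplace-type analysis of the density of $\nu$ pulled back through $\phi_k^{-1}$, for which the polynomial bound on $\phi_k'$ in (C4) and uniformity in \eqref{assume2} are both essential. Once this and the Cram\'er control on the bulk are secured, the strict subadditivity $(a_k-g_2-\epsilon)^{\gamma_m}+\epsilon^{\gamma_m}>(a_k-g_2)^{\gamma_m}$ closes the ratio bound, giving $\limsup_n n^{-\gamma_m}\log \mathbb{Q}(M_n<a_k-g_2-\epsilon\mid C_n^\delta) \leq -(c_{\mathrm{noloc}}-c_{\mathrm{loc}})+o_\delta(1)<0$, and \eqref{142} follows from \eqref{141} together with the upper tail \eqref{upper}.
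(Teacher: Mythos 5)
Your lower bound (single spike at level $\phi_k(X_1)\approx (a_k-g_2)n$ plus the law of large numbers for the remaining coordinates, with the spike probability computed by a Laplace-type estimate for $\nu\circ\phi_k^{-1}$) is exactly the paper's argument, and your deduction of \eqref{142} from \eqref{141} together with the upper tail \eqref{upper} is also the paper's. The problem is in your upper bound, and it is a genuine gap, not a technicality. You split the coordinates into ``large'' sites ($\phi_k(X_i)/n>\alpha_0$) and a ``bulk,'' and you claim that whenever the large-site masses $\sum_j\alpha_j$ do not account for the excess $a_k-g_2$, the bulk factor $R_n(\vec\alpha)$ is at most $e^{-\Theta(n)}$ ``by classical Cram\'er on bounded bulk variables, using $\operatorname{Var}_\nu(\phi_k)<\infty$.'' This is false: under $\nu=\frac{1}{Z}e^{p_1\phi_1+\cdots+p_m\phi_m}dx$ the observable $\phi_k$ has no positive exponential moments (its tail is $\exp(-|p_m|y^{\gamma_m}(1+o(1)))$ by \eqref{assume2}), so upward deviations of the bulk empirical mean of $\phi_k$ are only stretched-exponentially unlikely, of order $e^{-c\,n^{\gamma_m}}$, not $e^{-\Theta(n)}$; and truncating at the level $\alpha_0 n$ does not rescue a speed-$n$ bound (Hoeffding/Bernstein with range $\alpha_0 n$ is vacuous at that speed). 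Concretely, your union bound misses configurations in which the excess mass $\approx(a_k-g_2)n$ is carried by many \emph{intermediate} sites, each with $\phi_k(X_i)$ large (say of order $n^{0.9}$) but with $\phi_k(X_i)/n\le\alpha_0$, so that $L$ is empty; such configurations have probability of stretched-exponential order and must be shown to cost strictly more than $|p_m|(a_k-g_2)^{\gamma_m}n^{\gamma_m}$, which your proposal never does.

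This missing control of the intermediate scales is precisely the technical heart of the paper's proof: one truncates at the much lower level $\phi_k\le n^{\alpha}$ (so that Hoeffding at speed $n^{2\beta-1-2\alpha}>n^{\gamma_m}$ applies to the truncated sum, event $E^1_n$), bounds the number of sites exceeding the truncation level (event $E^2_n$), and bounds the total $\phi_m$-weight of those sites via a Gamma-tail estimate (event $E^3_n$, using \eqref{303} of Lemma \ref{lemma 3.1}); on the complement, the pigeonhole inequality $\sum_{i\in I}\phi_k(X_i)\le\bigl[\max_{i\in I}\phi_k(X_i)/\phi_m(X_i)\bigr]\sum_{i\in I}\phi_m(X_i)$ combined with $\phi_m\approx\phi_k^{\gamma_m}$ forces a single site with $M_n>a_k-g_2-\epsilon$. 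Your per-cell tail estimate (your self-identified ``main obstacle'') is comparatively easy — it is essentially Lemma \ref{lemma 3.1} — whereas the bulk/intermediate-site control you treat as routine is the part that actually requires work. A proof along your lines could likely be completed (and would even give the sharp constant $(a_k-g_2-\epsilon)^{\gamma_m}+\epsilon^{\gamma_m}$, which the paper does not claim), but only after replacing the ``classical Cram\'er'' step by a quantitative bulk upper-deviation bound at speed $n^{\gamma_m}$, i.e.\ by machinery of the same kind as the paper's $E^1_n,E^2_n,E^3_n$ decomposition; as written, the argument does not close.
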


Since the scaling factor in the lower tail estimate  \eqref{141} grows slowly than $n$, we need a completely different approach from the standard large deviation theory to prove the estimate \eqref{141}. In order to accomplish this, we partially adapt the method used in \cite{cha2}. As mentioned before, following the proof of Theorem \ref{theorem 1.4}, one can check that for a large class of functions $\phi_i$'s satisfying some technical assumptions, the lower tail estimate \eqref{141} with the scaling factor $g(n):=(\phi_m \circ \phi_k^{-1})(n)$ holds as well.

It is important to note that Theorem \ref{theorem 1.3} and Theorem \ref{theorem 1.4}  provide a complete picture of the localization and delocalization phenomena of the microcanonical ensembles with multiple constraints. In fact, let us assume that $(a_1,\cdots,a_{k-1})\in \mathcal{S}_1$, and take the  corresponding reference measure $\mathbb{Q}$ as in Theorem \ref{theorem 1.4}. If $a_k>g_2(a_1,\cdots,a_{k-1})$, then the localization happens in the sense of \eqref{142}, and the delocalization happens at all of the other sites (see Theorem \ref{theorem 3.6} for details). 
On the other hand, if   $a_k\leq g_2(a_1,\cdots,a_{k-1})$, then localization phenomenon does not happen according to Theorem \ref{theorem 1.3} and Remark \ref{remark 3.2}. Note that as mentioned before,  when $(a_1,\cdots,a_{k-1})\in \mathcal{S}_2$, whatever $a_k$ is,  localization phenomenon does not occur (see Theorem \ref{theorem 1.3}).

It is also crucial to note that when the localization happens,   the maximum component $M_n$ behaves  differently in the upper tail and lower tail regime. In fact, the upper tail estimate  is universal in the sense that the estimate \eqref{upper} holds with the scaling factor $n$   for any functions $\phi_i$'s satisfying Assumption \ref{assume0}. On the other hand, the lower tail estimate \eqref{141} is not universal in the sense that the scaling factor heavily relies on the structures of  functions $\phi_i$'s.
In the case when the localization does not happen (condition (i) or (ii) in Theorem \ref{theorem 1.3}), the delocalization estimate \eqref{delocal0} is  universal.

\section{Large deviations  and equivalence of ensembles results} \label{section 3}

In this section, we characterize the limit distribution to which the finite marginal distribution of \eqref{micro} converges. As explained in Section \ref{section 2}, it exhibits a  phase transition phenomenon.  In Section \ref{section 3.1}, we briefly review the theory of large deviations and the classical equivalence of ensembles result. In Section \ref{section 3.2} and \ref{section 3.3}, we prove Theorem \ref{theorem 1.1} using a large deviation theory. In Section \ref{section 3.4}, we precisely characterize the limit distribution $\lambda^*$ in Theorem \ref{theorem 1.1} and conclude the proof of Theorem \ref{theorem 1.0}. Finally, in Section \ref{section 3.5}, we study a structure of the large deviation rate function for  several empirical means.

\subsection{Preliminaries : large deviation principle in statistical mechanics and Gibbs conditioning principle}\label{section 3.1} The theory of large deviations has played an essential role in the equilibrium statistical mechanics. The sequence of probability distributions $\mu_n$ on the Polish space $\mathcal{S}$ are said to satisfy the \it{large deviation principle} (LDP) with the rate function $I$ provided that for all Borel sets $A$,
\begin{align*}
-\inf_{x\in A^{\mathrm{o}}} I(x)\leq \liminf_{n\rightarrow \infty} \frac{1}{n}\log \mu_n(A^{\mathrm{o}}) \leq \limsup_{n\rightarrow \infty} \frac{1}{n}\log \mu_n(\bar{A})\leq -\inf_{x\in \bar{A}} I(x).
\end{align*} 
We say that \it{weak LDP} holds when the  upper bound
\begin{align*}
\limsup_{n\rightarrow \infty} \frac{1}{n}\log \mu_n(\bar{A})\leq -\inf_{x\in \bar{A}} I(x)
\end{align*}
holds only for compact sets $\bar{A}$. We require the rate function $I:\mathcal{S}\rightarrow [0,\infty]$  to be lower semicontinuous. $I$ is said to be a \it{good} rate function if the set $\{x\in \mathcal{S}|I(x)\leq c\}$ is compact for any $c\in \R$.

Let us  consider the classical lattice system on $\mathbb{Z}$. Its configuration space is denoted by $\Omega=\R^{\mathbb{Z}}$, equipped with the product topology and the corresponding Borel field $\mathcal{B}$. In the absence of interactions between the  particles, a thermodynamic behavior of the empirical distributions can be described by the Sanov's large deviation theorem. It states that under the independent and identically distributed (i.i.d.)
 law $P = \lambda^{\mathbb{Z}}$ for some probability measure $\lambda$ on $\R$, the sequence of empirical distributions  $P(\frac{1}{n}( \delta_{X_1}+\cdots+\delta_{X_n})\in \cdot)$ satisfies the LDP with the rate function  given by \it{relative entropy}:
\begin{align*}
H(\mu|\lambda):= &\begin{cases}
\int \frac{d\mu}{d\lambda}\log \frac{d\mu }{d\lambda}d\lambda \quad &\mu \ll \lambda, \\
0 \quad &\text{otherwise}.
\end{cases}
\end{align*}
 This LDP result has been extended to the general Gibbs measures on $d$-dimensional underlying space $\mathbb{Z}^d$ in the presence of bounded and translation-invariant interaction potentials. We refer to \cite{cd,dv,fo,g1,g2,o} for the details and \cite{g1,rs} for a monograph on Gibbs measures.

Once we have a large deviation principle for the sequence of probability distributions, we are able to study  asymptotic behaviors of the conditional distributions. This can be rigorously stated as follows, which is called the  \it{Gibbs conditioning principle}:

\begin{theorem}\cite[Theorem 7.1]{l2}\label{theorem 2.1}
Let $\mathbb{P}_n$ be probability distributions on the Polish space $\mathcal{S}$ satisfying the large deviation principle with a good rate function $I$. Suppose that $F$ and $F_\epsilon$ ($\epsilon>0$) are  closed sets in $\mathcal{S}$ such that \\
(i) $I(F):=\inf_{x\in F} I(x)<\infty$, \\
(ii) $\mathbb{P}_n(F_\epsilon)>0$ for all $n$ and $\epsilon>0$, \\
(iii) $F=\cap_{\epsilon>0} F_\epsilon$,  \\
(iv) $F\subset (F_\epsilon)^{\mathrm{o}}$ for all $\epsilon>0$. \\
Define $M_F$ be a collection of $x\in F$ that minimize $I$ over the set $F$. Then, for any open set $G$ containing $M_F$,
\begin{align*}
\limsup_{\epsilon \rightarrow 0} \limsup_{n\rightarrow \infty} \frac{1}{n}\log \mathbb{P}_n(G^c|F_\epsilon)<0.
\end{align*}
If in addition $M_F=\{x_0\}$ is a singleton, then
\begin{align*}
\lim_{\epsilon \rightarrow 0} \lim_{n\rightarrow \infty} \mathbb{P}_n(\cdot |F_\epsilon) = \delta_{x_0}.
\end{align*}
\end{theorem}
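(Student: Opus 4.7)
The proof is a classical application of the LDP, decoupled into an upper bound on the numerator and a lower bound on the denominator of
\[
\mathbb{P}_n(G^c \mid F_\epsilon) = \frac{\mathbb{P}_n(G^c \cap F_\epsilon)}{\mathbb{P}_n(F_\epsilon)}.
\]
For the denominator I would invoke hypothesis (iv): since $F \subset (F_\epsilon)^{\mathrm{o}}$, the LDP lower bound applied to the open set $(F_\epsilon)^{\mathrm{o}}$ gives
\[
\liminf_{n\to\infty} \frac{1}{n}\log \mathbb{P}_n(F_\epsilon) \geq -\inf_{(F_\epsilon)^{\mathrm{o}}} I \geq -\inf_F I = -I(F),
\]
which is finite thanks to (i). For the numerator, $G^c \cap F_\epsilon$ is closed as the intersection of a closed set with the complement of an open set, so the LDP upper bound yields
\[
\limsup_{n\to\infty} \frac{1}{n}\log \mathbb{P}_n(G^c \cap F_\epsilon) \leq -\inf_{G^c \cap F_\epsilon} I.
\]
Combining these,
\[
\limsup_{n\to\infty} \frac{1}{n}\log \mathbb{P}_n(G^c \mid F_\epsilon) \leq -\inf_{G^c \cap F_\epsilon} I + I(F).
\]

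Next I would send $\epsilon \to 0$, for which the key step is
\[
\liminf_{\epsilon \to 0} \inf_{G^c \cap F_\epsilon} I \;\geq\; \inf_{G^c \cap F} I.
\]
Along any sequence $\epsilon_n \downarrow 0$ I would select almost-minimizers $x_n \in G^c \cap F_{\epsilon_n}$; once the liminf is finite, the $x_n$ lie eventually in a common compact level set of $I$ (goodness is used here), so a subsequence converges to some $x^\ast$. Passing if necessary to the decreasing envelope $\bigcap_{\epsilon' \geq \epsilon} F_{\epsilon'}$ (which still satisfies (ii)--(iv) and shares the intersection $F$), the closedness of the $F_\epsilon$ together with (iii) forces $x^\ast \in G^c \cap F$, and lower semicontinuity of $I$ yields the claim.

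It remains to produce a strict gap $\inf_{G^c \cap F} I > I(F)$, which is where the hypothesis $M_F \subset G$ enters. Suppose, for contradiction, that there were $y_n \in G^c \cap F$ with $I(y_n) \to I(F)$. Eventually the $y_n$ lie in the compact level set $\{I \leq I(F)+1\}$, so along a subsequence they converge to some $y^\ast \in G^c \cap F$; lower semicontinuity gives $I(y^\ast) \leq I(F)$, and minimality forces $I(y^\ast) = I(F)$, i.e.\ $y^\ast \in M_F \subset G$, contradicting $y^\ast \in G^c$. Combining the two previous displays with this strict gap yields the first conclusion of the theorem. When $M_F = \{x_0\}$, applying the first conclusion to an arbitrary open neighborhood $G$ of $x_0$ gives $\mathbb{P}_n(G \mid F_\epsilon) \to 1$ in the iterated limit, which is exactly the assertion $\mathbb{P}_n(\,\cdot\, \mid F_\epsilon) \Rightarrow \delta_{x_0}$.

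The main obstacle is the $\epsilon \to 0$ passage for the infimum of $I$ over $G^c \cap F_\epsilon$: without monotonicity of $\{F_\epsilon\}$ built into the hypotheses, one must exploit goodness of $I$ to extract a convergent subsequence of near-minimizers, and then use the intersection property (iii) to pin its limit in $F$. Everything else is a routine juxtaposition of the LDP upper and lower bounds with lower semicontinuity of $I$.
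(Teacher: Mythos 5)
A preliminary remark: the paper does not prove this statement at all --- it quotes it from L\'eonard \cite[Theorem 7.1]{l2} --- so your argument has to stand on its own. Its skeleton is the standard one and most of it is correct: the LDP lower bound for the denominator through (iv) and (i), the LDP upper bound for the closed set $G^c\cap F_\epsilon$, the strict gap $\inf_{G^c\cap F}I>I(F)$ obtained from goodness, lower semicontinuity and $M_F\subset G$ (with the empty-intersection case trivial), and the Portmanteau-type deduction in the singleton case are all fine.

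The genuine gap is the $\epsilon\to0$ passage. The inequality $\liminf_{\epsilon\to0}\inf_{G^c\cap F_\epsilon}I\geq\inf_{G^c\cap F}I$ does not follow from (i)--(iv): nothing in the hypotheses makes the family $(F_\epsilon)$ monotone, so from $x_n\in G^c\cap F_{\epsilon_n}$ with $x_n\to x^\ast$ you cannot place $x^\ast$ in any single $F_\epsilon$, hence not in $F$; condition (iii) constrains only the full intersection, not limits of points drawn from different members of the family. Your proposed repair via the envelope $\tilde F_\epsilon:=\bigcap_{\epsilon'\geq\epsilon}F_{\epsilon'}$ does not work: since $\tilde F_\epsilon\subset F_\epsilon$, an upper bound on $\mathbb{P}_n(G^c\cap\tilde F_\epsilon)$ gives no control of the numerator $\mathbb{P}_n(G^c\cap F_\epsilon)$ that you actually need (the replacement goes the wrong way), and $\tilde F_\epsilon$ need not inherit (ii) or (iv) (the interior of an infinite intersection can be strictly smaller than the intersection of the interiors, and $\mathbb{P}_n(\tilde F_\epsilon)$ may vanish). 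The gap is not cosmetic: as literally stated, without nestedness the conclusion can fail. Take $\mathbb{P}_n$ the law of $N(0,1/n)$ on $\R$, so $I(x)=x^2/2$ is a good rate function, and let $F=\{1\}$, $F_\epsilon=[1-\epsilon,1+\epsilon]\cup[\epsilon/2,\epsilon]$, $G=(0.9,1.1)$. Then (i)--(iv) hold and $M_F=\{1\}\subset G$, yet for each small fixed $\epsilon$ one has $\inf_{[\epsilon/2,\epsilon]}I=\epsilon^2/8<(1-\epsilon)^2/2$, so $\mathbb{P}_n(G^c\mid F_\epsilon)\to1$ as $n\to\infty$ and $\frac{1}{n}\log\mathbb{P}_n(G^c\mid F_\epsilon)\to0$; the iterated limsup is $0$, not negative. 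What is missing, and what is implicitly present both in the source and in every application in this paper (the conditioning sets, e.g.\ the $F_\delta$ in the proof of Lemma \ref{prop 2.6}, are shrinking rectangles), is monotonicity $F_{\epsilon'}\subset F_\epsilon$ for $\epsilon'\leq\epsilon$. Once the family is nested, your compactness argument closes immediately (the tail of $(x_n)$ lies in each closed $F_\epsilon$, so $x^\ast\in\bigcap_{\epsilon>0}F_\epsilon=F$), or one can simply invoke \cite[Lemma 4.1.6]{dz} for decreasing closed sets and a good rate function, as the paper does in its analogous $\delta\to0$ limits; with that amendment the rest of your proof goes through.
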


As an application of the Gibbs conditioning principle, one can deduce the  following  classical result in the equilibrium statistical mechanics, which is called the principle of \it{equivalence of ensembles}:
\begin{theorem}\cite[Chapter 5]{rs} \label{equivalence}
Let $\lambda\in \mathcal{M}_1(\mathcal{S})$ and $\phi:\mathcal{S}\rightarrow \R$ be a bounded continuous function. Let us define $a:=\lambda$-ess inf $\phi$ and $b:=\lambda$-ess sup $\phi$. For $\beta\in \R$,  denote $\mu_{\beta}$ by the probability measure on $\mathcal{S}$ of the form:
\begin{align*}
d\mu_{\beta} = \frac{1}{Z_\beta}e^{-\beta \phi}d\lambda.
\end{align*} 
Suppose that $\{X_k\}$'s are i.i.d. with marginal $\lambda$.
Then, for $z\in (a,b)$, there exists a unique $\beta$ such that
\begin{align*}
\lim_{\delta\rightarrow 0^+}\lim_{n\rightarrow \infty} \mathbb{P}\bigg(X_1\in \cdot \ \bigg \vert \left|\frac{\phi(X_1)+\cdots+\phi(X_n)}{n}-z\right|\leq \delta\bigg) = \mu_{\beta}.
\end{align*}
Here, the inverse temperature $\beta$ is chosen to satisfy 
\begin{align*}
\int_\mathcal{S} \phi d\mu_\beta = z.
\end{align*}
\end{theorem}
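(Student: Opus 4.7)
The plan is to combine Sanov's theorem with the Gibbs conditioning principle (Theorem~\ref{theorem 2.1}). Under $\mathbb{P} = \lambda^{\otimes \mathbb{N}}$, Sanov's theorem gives an LDP on $\mathcal{M}_1(\mathcal{S})$ for the empirical measures $L_n := \frac{1}{n}\sum_{i=1}^n \delta_{X_i}$ with good rate function $H(\cdot\mid\lambda)$. The conditioning event rewrites as $\{L_n \in F_\delta\}$ with $F_\delta := \{\mu \in \mathcal{M}_1(\mathcal{S}) : |\int \phi\,d\mu - z|\leq \delta\}$, and the boundedness and continuity of $\phi$ make $\mu \mapsto \int \phi\,d\mu$ weakly continuous, so each $F_\delta$ is closed. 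Setting $F := \{\mu : \int \phi\,d\mu = z\}$ one has $F = \bigcap_{\delta>0} F_\delta$ and $F \subset (F_\delta)^{\mathrm{o}}$, verifying hypotheses (iii)--(iv) of Theorem~\ref{theorem 2.1}; hypothesis (i) follows from the existence of the minimizer identified below (whose relative entropy is finite because $\phi$ is bounded), and hypothesis (ii) holds for all $n$ sufficiently large because $z \in (a,b)$ allows the empirical mean of $\phi(X_i)$ to land within $\delta$ of $z$ with positive probability.

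The next step is to identify a unique minimizer of $H(\cdot\mid\lambda)$ over $F$. Strict convexity of relative entropy on the affine set $F$ gives uniqueness, and a formal Lagrange-multiplier extremization produces the candidate $\mu_\beta$ of the theorem. Optimality is confirmed by the one-line identity
\[
H(\mu\mid\lambda) - H(\mu_\beta\mid\lambda) = H(\mu\mid\mu_\beta) + \beta\Bigl(z - \int \phi\,d\mu\Bigr),
\]
which reduces to $H(\mu\mid\mu_\beta) \geq 0$ on $F$. To select the inverse temperature, note that $\Lambda(\beta) := \log Z_\beta$ is real-analytic on $\R$ (boundedness of $\phi$ trivializes the interchange of derivative and integral) with $\Lambda'(\beta) = -\int \phi\,d\mu_\beta$ and $\Lambda''(\beta) = \mathrm{Var}_{\mu_\beta}(\phi) > 0$ as long as $\phi$ is not $\lambda$-almost surely constant, which holds because $a < b$. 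A monotone-convergence argument as $\beta \to \pm\infty$ shows $\int \phi\,d\mu_\beta$ approaches $a$ and $b$ respectively, so $\beta \mapsto \int \phi\,d\mu_\beta$ is a strictly decreasing continuous bijection from $\R$ onto $(a,b)$, yielding the required unique $\beta \in \R$ with $\int \phi\,d\mu_\beta = z$.

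Applying Theorem~\ref{theorem 2.1} to the pushforward $\mathbb{P}_n := \mathbb{P} \circ L_n^{-1}$, whose set of minimizers over $F$ is the singleton $\{\mu_\beta\}$, yields $\lim_{\delta \to 0}\lim_{n\to\infty}\mathbb{P}(L_n \in \cdot \mid F_\delta) = \delta_{\mu_\beta}$ weakly on $\mathcal{M}_1(\mathcal{S})$. To descend from this statement about empirical measures to the law of $X_1$, I would invoke exchangeability of $(X_1,\ldots,X_n)$ under the conditional measure: for any $g \in C_b(\mathcal{S})$,
\[
\mathbb{E}[g(X_1) \mid F_\delta] = \frac{1}{n}\sum_{i=1}^n \mathbb{E}[g(X_i) \mid F_\delta] = \mathbb{E}\Bigl[\int g\,dL_n \,\Big|\, F_\delta\Bigr],
\]
and since $\mu \mapsto \int g\,d\mu$ is bounded continuous on $\mathcal{M}_1(\mathcal{S})$, the weak convergence of the conditional law of $L_n$ to $\delta_{\mu_\beta}$ combined with bounded convergence delivers $\int g\,d\mu_\beta$ in the iterated limit, i.e.\ convergence of the marginal to $\mu_\beta$. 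The only delicate obstacle is verifying the Gibbs-conditioning hypotheses, in particular the open-interior condition $F \subset (F_\delta)^{\mathrm{o}}$ and the positive-probability condition for small $n$; the rest of the argument is routine and exploits the boundedness of $\phi$ in an essential way, which is precisely the restriction that the main body of the paper works to relax.
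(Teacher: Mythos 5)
Your proposal is correct, and it follows exactly the route the paper indicates: the paper states this result without proof, citing \cite[Chapter 5]{rs} and noting it is an application of the Gibbs conditioning principle, and your argument (Sanov's theorem plus Theorem \ref{theorem 2.1}, the relative-entropy identity identifying the tilted measure $\mu_\beta$ as the unique minimizer, monotonicity of $\beta\mapsto\int\phi\,d\mu_\beta$ onto $(a,b)$, and exchangeability to pass from $L_n$ to the law of $X_1$) is precisely this standard derivation, mirroring the paper's own proofs of Theorem \ref{theorem 1.1} and Proposition \ref{prop 5.1}. The only caveat you already flag—that $\mathbb{P}(C_n^\delta)>0$ may fail for small $n$—is harmless since the conclusion concerns only the limit $n\to\infty$, where the LDP lower bound guarantees positivity.
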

It is not hard to check that similar result holds under the several constraints \eqref{several} with bounded and continuous observables $\phi_i$'s.
We refer to  \cite{dsz} for the generalized version of Theorem \ref{equivalence}, where the constraint is given by the bounded continuous interacting potentials. See also \cite{g3} for the case when the constraint is given by  possibly unbounded interactions.

\subsection{Large deviations for the joint law of empirical distributions and several empirical means} \label{section 3.2} In this section, we obtain the large deviation results for the joint law of empirical distributions and several empirical means, which will play a crucial role in proving Theorem \ref{theorem 1.1}.

\begin{theorem} \label{theorem 2.2}
Under the reference measure $\mathbb{P}$, the sequence $(L_n, S^1_n,\cdots,S^k_n)$ in $\mathcal{M}_1(\R^+)\times (\R^+)^k$ satisfies the weak LDP with a rate function $J$ given by
\begin{align*}
J&(\mu,v_1,\cdots,v_k)\\
&=\begin{cases} H(\mu | \lambda) &\text{if} \ \int \phi_1d\mu=v_1,\cdots,\int \phi_{k-1}d\mu=v_{k-1},\int \phi_kd\mu\leq v_k, \\
\infty &\text{otherwise}. \end{cases}
\end{align*}
\end{theorem}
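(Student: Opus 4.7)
Sanov's theorem provides an LDP for $L_n$ under $\mathbb{P}$ with good rate function $\mu \mapsto H(\mu|\lambda)$, and the deterministic identities $S_n^i = \int \phi_i \, dL_n$ make $(L_n, S_n^1, \ldots, S_n^k)$ a functional of $L_n$. If each $\phi_i$ were bounded continuous, the contraction principle would immediately give the joint LDP with rate $H(\mu|\lambda)$ on the set $\{\int \phi_i d\mu = v_i : i=1,\ldots,k\}$. The essential obstruction is that each $\phi_i$ is unbounded, so $\mu \mapsto \int \phi_i d\mu$ is only lower semicontinuous for the weak topology. I would exploit condition (C3), which iterated gives the pointwise bound $\phi_i(x) \leq C\phi_k(x)^{\alpha_i}$ at infinity with $\alpha_i := \kappa^{-(k-i)} < 1$ for $i<k$; this converts weak convergence into convergence of $\int \phi_i d\mu$ whenever $\int \phi_k d\mu$ stays bounded, and is the source of the asymmetry between the equality constraints ($i<k$) and the single inequality constraint ($i=k$) in the rate function.

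\textbf{Upper bound.} For the weak LDP it suffices to prove, at each $(\mu_0, v^0)$, the local bound
\begin{align*}
\limsup_{r\to 0}\limsup_{n\to \infty} \frac{1}{n}\log \mathbb{P}\bigl((L_n, S_n^1, \ldots, S_n^k) \in B((\mu_0, v^0), r)\bigr) \leq -J(\mu_0, v^0);
\end{align*}
a finite cover then gives the upper bound on any compact set. When $J(\mu_0, v^0) < \infty$, this is immediate from Sanov applied to the $L_n$-marginal. When $J(\mu_0, v^0) = \infty$, I handle two sub-cases. If $\int \phi_i d\mu_0 > v_i^0$ for some $i$, then lower semicontinuity of $\mu \mapsto \int \phi_i d\mu$ makes this strict inequality persist on a weak neighborhood of $\mu_0$, emptying the event for small $r$. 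If instead $\int \phi_k d\mu_0 \leq v_k^0$ but $\int \phi_i d\mu_0 < v_i^0$ for some $i<k$, then the constraint $\int \phi_k dL_n \leq v_k^0 + r$ together with (C3) yields a uniform integrability estimate $\int \phi_i \mathbf{1}_{\phi_i > K} dL_n \leq \eta(K)$ with $\eta(K) \to 0$; combined with the continuity of $\mu \mapsto \int (\phi_i \wedge K)d\mu$, this forces $\int \phi_i dL_n$ to be close to $\int \phi_i d\mu_0$ whenever $L_n$ is weakly close to $\mu_0$, again emptying the event.

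\textbf{Lower bound.} Fix $(\mu_0, v^0) \in G$ open with $J(\mu_0, v^0) = H(\mu_0|\lambda) < \infty$. I would first approximate $\mu_0$ by $\mu_\epsilon$ supported in $[\epsilon, \epsilon^{-1}]$ with $H(\mu_\epsilon|\lambda) \to H(\mu_0|\lambda)$ and $\int \phi_i d\mu_\epsilon \to \int \phi_i d\mu_0$ for every $i$; the latter is dominated convergence, justified by $\int \phi_k d\mu_0 < \infty$ and $\phi_i \leq C\phi_k^{\alpha_i}$. In the equality case $\int \phi_k d\mu_0 = v_k^0$, the $\phi_i$'s are bounded continuous on $[\epsilon, \epsilon^{-1}]$, so a standard Sanov lower bound applied to configurations whose empirical measure sits near $\mu_\epsilon$ and is supported in this interval gives probability $\geq e^{-n(H(\mu_\epsilon|\lambda)+o(1))}$. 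In the strict inequality case $\int \phi_k d\mu_0 < v_k^0$, set $c := v_k^0 - \int \phi_k d\mu_0 > 0$ and $x_n := \phi_k^{-1}(nc)$, and consider configurations where $X_1, \ldots, X_{n-1}$ are typical for $\mu_\epsilon$ while $X_n$ lies in a unit window around $x_n$. A single atom is weakly negligible, so $L_n$ remains close to $\mu_\epsilon$; the spike contributes $\phi_k(x_n)/n \approx c$ to $S_n^k$ but only $(nc)^{\alpha_i}/n = o(1)$ to $S_n^i$ for $i<k$ by (C3), and its $\lambda$-cost $e^{-\phi_1(x_n) + O(1)}$ is $e^{-o(n)}$ since $\phi_1(x_n) \leq (nc)^{\alpha_1}$. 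Combined with Sanov for the first $n-1$ coordinates, this yields $\mathbb{P}(G) \geq e^{-n(H(\mu_\epsilon|\lambda)+o(1))}$, and $\epsilon \to 0$ closes the argument.

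\textbf{Main obstacle.} The delicate point is the strict-inequality case of the lower bound: the asymmetry $\int \phi_k d\mu \leq v_k$ in $J$ reflects the fact that a single coordinate can carry macroscopic $\phi_k$-mass at subexponential probability cost, and the single-spike construction is the concrete mechanism realizing this. The three simultaneous estimates it requires---that the spike provides the missing $\phi_k$-mass, perturbs the other $S_n^i$ by $o(1)$, and costs only $e^{o(n)}$ under $\lambda$---all rest on the power-gap condition (C3) applied iteratively. The matching upper bound then rests on the same domination to show that the constraints $\int\phi_i d\mu=v_i$ for $i<k$ are effectively forced once $\int \phi_k dL_n$ is bounded.
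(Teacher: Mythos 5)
Your proposal is correct in its essentials but follows a genuinely different route from the paper. The paper treats $(L_n,S^1_n,\cdots,S^k_n)$ as the empirical mean of the i.i.d.\ vectors $(\delta_{X_i},\phi_1(X_i),\cdots,\phi_k(X_i))$ in $\mathcal{M}(\R^+)\times\R^k$ and invokes the abstract Cram\'er theorem \cite[Theorem 6.1.3]{dz}, so the weak LDP comes for free with the rate function in Legendre-transform form; the identification of that transform with the stated $J$ is then a convex-duality computation using the finiteness domain $D$ from (C2)--(C3) and the Donsker--Varadhan variational formula for $H(\cdot|\lambda)$, and the inequality $\int\phi_k d\mu\leq v_k$ emerges from the constraint $p_k\leq 0$. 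You instead prove the two bounds by hand: Sanov for the $L_n$-marginal plus a pointwise case analysis for the local upper bound (lower semicontinuity of $\mu\mapsto\int\phi_i d\mu$ when a constraint is exceeded, and the uniform-integrability bound $\int\phi_i\1_{\phi_i>K}dL_n\leq K^{1-1/\alpha_i}\int\phi_k dL_n$ from iterated (C3) when it is undershot), and a constructive lower bound via truncation of $\mu_0$ plus a single spike of $\phi_k$-mass $c=v_k^0-\int\phi_k d\mu_0$. Your route is more elementary and exposes the probabilistic mechanism behind the asymmetry in $J$ (a single coordinate carries macroscopic $\phi_k$-mass at cost $e^{o(n)}$), which is exactly the mechanism the paper deploys later in the proofs of Theorem \ref{theorem 3.1} and Theorem \ref{theorem 1.4}; the paper's route is shorter and delivers as a by-product that the contracted rate function $I$ is the Legendre dual of $H(p_1,\cdots,p_k)$, a fact used repeatedly in Section \ref{section 3.4} and Lemma \ref{prop 2.9}.

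Two repairable points in your sketch. First, the spike should not be placed in a unit window around $x_n=\phi_k^{-1}(nc)$: under (C1)--(C4) alone, $\phi_k$ and the density $e^{-\phi_1}$ may vary by far more than a constant factor over an interval of length one, so neither the claimed contribution $\phi_k(X_n)/n\approx c$ nor the cost estimate $e^{-\phi_1(x_n)+O(1)}$ is justified as stated. Put the spike in the window $\{x:\vert\phi_k(x)/n-c\vert<\delta\}$; then $\phi_1\leq(n(c+\delta))^{\alpha_1}=o(n)$ on the window and (C4) gives a polynomial-in-$n$ lower bound on its Lebesgue measure via the change of variables, so $\lambda(\text{window})=e^{-o(n)}$. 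This is precisely the content of \eqref{301} in Lemma \ref{lemma 3.1}; note that your sketch never invokes (C4), although your lower bound needs it at exactly this step (the paper's proof of Theorem \ref{theorem 2.2} does not). Second, in the local upper bound you should also cover the case where the constraints hold but $H(\mu_0|\lambda)=\infty$; the same Sanov bound on the $L_n$-marginal handles it, since $\inf_{\bar B(\mu_0,r)}H(\cdot|\lambda)\to\infty$ as $r\to 0$ by lower semicontinuity.
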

\begin{proof}
We follow the argument in \cite{kr}. We apply   \cite[Theorem 6.1.3]{dz} to obtain the weak LDP for the sequence $(L_n, S^1_n,\cdots, S^k_n)$. This sequence is the empirical mean of the i.i.d random variables $(\delta_{X_i},\phi_1(X_i),\cdots,\phi_k(X_i))$  taking values in $ \mathcal{M}_1(\R^+)\times (\R^+)^k$. Let us denote $\mathcal{X}:= \mathcal{M}(\R^+) \times \R^k$, which is equipped with the product topology of weak topology on the space of measures and the standard topology on $\R^k$, and similarly define $\mathcal{E}:=\mathcal{M}_1(\R^+)\times (\R^+)^k$. It is not hard to check that Assumption 6.1.2 in \cite{dz} is satisfied in this setting (see \cite[Lemma 3.2]{kr} for  explanations in the case of $k=1$).
Thus, applying \cite[Theorem 6.1.3]{dz}, one can conclude that under the reference measure $\mathbb{P}$, the sequence $(L_n, S^1_n,\cdots, S^k_n)$ satisfies the weak LDP with a rate function $J$  given by
\begin{align} \label{221}
J(&\mu,v_1,\cdots,v_k) \nonumber\\
&=\sup_{f\in C_b(\R^+),p_1,\cdots,p_k\in \R} \Big\{\int fd\mu+p_1v_1+\cdots+p_kv_k-\log \int e^{f+p_1 \phi_1+\cdots+p_k \phi_k}d\lambda \Big\}.
\end{align}
It is easy to check that for any function $f\in C_b(\R^+)$, $(p_1,\cdots,p_k)$ satisfies
\begin{align*}
\int e^{f+p_1 \phi_1+\cdots+p_k \phi_k} d\lambda < \infty
\end{align*}
if and only if $(p_1,\cdots,p_k)$ belongs to the set
\begin{multline} \label{D}
D:=\{p_k<0\} \cup \{p_k=0,p_{k-1}<0\} \cup \cdots \cup \{p_k=p_{k-1}=\cdots=p_3=0,p_2<0\} \\
\cup \{p_k=p_{k-1}=\cdots=p_3=p_2=0,p_1<1\}
\end{multline}
thanks to the Assumption \ref{assume0}. Thus, it suffices to take the supremum over the set $D$ in the expression \eqref{221}. For each $(p_1,\cdots,p_k)\in D$, let us define the auxiliary probability measure $\nu^{(p_1,\cdots,p_k)}$ on $(0,\infty)$ whose distribution is given by
\begin{align*}
\frac{1}{Z^{(p_1,\cdots,p_k)}}e^{p_1 \phi_1+\cdots+p_k \phi_k}d\lambda
\end{align*}
($Z^{(p_1,\cdots,p_k)}$ is a normalizing constant).
Using the variation formula for the relative entropy:
\begin{align*}
H(\mu|\nu) = \sup_{f\in C_b} \Big\{\int fd\mu-\log \int e^f d\nu\Big\},
\end{align*}
one can rewrite \eqref{221} as
\begin{align*}
J&(\mu,v_1,\cdots,v_k) \\
&=\sup_{(p_1,\cdots,p_k)\in D} \Big\{p_1v_1+\cdots+p_kv_k-\log Z^{(p_1,\cdots,p_k)}+\sup_{f\in C_b}\big(\int fd\mu-\log \int e^{f} d\nu^{(p_1,\cdots,p_k)}\big) \Big\} \\
&=\sup_{(p_1,\cdots,p_k)\in D} \Big\{p_1v_1+\cdots+p_kv_k-\log Z^{(p_1,\cdots,p_k)}+H(\mu | \nu^{(p_1,\cdots,p_k)})\Big\} \\
&= \sup_{(p_1,\cdots,p_k)\in D} \Big\{p_1v_1+\cdots+p_kv_k+ H(\mu | \lambda)- \int (p_1 \phi_1+\cdots+p_k \phi_k)d\mu \Big\}.
\end{align*}
If we define the set $\mathcal{T}\subset\mathcal{M}_1(\R^+)$ by
\begin{align*}
\mathcal{T}:= \Big\{\mu\in \mathcal{M}_1(\R^+) \Big \vert \int \phi_1d\mu=v_1,\cdots,\int \phi_{k-1}d\mu=v_{k-1},\int \phi_kd\mu\leq v_k\Big\},
\end{align*}
then one can easily check that $J(\mu,v_1,\cdots,v_k)=H(\mu|\lambda)$ when $\mu\in \mathcal{T}$ and $\infty$ otherwise.
\end{proof}

\begin{remark}  \label{remark 2.3}
We present several remarks regarding Theorem \ref{theorem 2.2}.

1. If each function $\phi_i$' is bounded and continuous, then it is obvious that the sequence $(L_n, S^1_n,\cdots,S^k_n)$  satisfies the (full) LDP with a rate function $J^{\text{bounded}}$ defined by
\begin{align*}
J^{\text{bounded}}&(\mu,v_1,\cdots,v_k)\\
&=\begin{cases} H(\mu | \lambda) &\text{if} \ \int \phi_1d\mu=v_1,\cdots, \int \phi_kd\mu= v_k, \\
\infty &\text{otherwise}. \end{cases}
\end{align*}
Theorem \ref{theorem 2.2} implies that when $\phi_i$'s are unbounded functions satisfying Assumption \ref{assume0}, the rate function $J(\mu,v_1,\cdots,v_k)$ may be finite even when $\int  \phi_k d\mu \neq v_k$ since the weak topology induced on the space of probability measures is not strong enough to capture the behavior near the infinity. Note that $J(\mu,v_1,\cdots,v_k)=\infty$ if $\int \phi_i d\mu \neq v_i$ for some  $1\leq i\leq k-1$ since $\phi_k$ controls other functions $\phi_1,\cdots,\phi_{k-1}$.

2. When we consider the pair of  empirical distributions  and a single empirical mean, the large deviation result  Theorem \ref{theorem 2.2} reads as follows (see \cite[Lemma 3.3]{kr} in the case of $\phi(x) = x^p$ under the generalized Gaussian distribution): under the reference measure $(\frac{1}{Z}e^{-\phi}dx)^{\otimes \mathbb{N}}$, the sequence $(L_n,\frac{\phi(X_1)+\cdots+\phi(X_n)}{n})$ satisfies the (full) LDP with a good rate function
\begin{align*}
J(\mu,v)
&=\begin{cases} H(\mu | \lambda)+v-\int \phi d\mu &\text{if} \ \int \phi d\mu\leq v,\\
\infty &\text{otherwise}. \end{cases}
\end{align*}

3. For any fixed positive integer $j$, let us define
\begin{align*}
S^i_{n-j} := \frac{\phi_i(X_{j+1})+\cdots+\phi_i(X_n)}{n-j}
\end{align*}
for each $1\leq i\leq k$. Then, under the reference measure $\mathbb{P}$, the sequence $(L_n,S^1_{n-j},\cdots,S^k_{n-j})$ satisfies the weak LDP with the same rate function $J$ defined in Theorem \ref{theorem 2.2}. Indeed, two sequences  $(L_n,S^1_n,\cdots,S^k_n)$ and $(L_n,S^1_{n-j},\cdots,S^k_{n-j})$ are exponentially equivalent since for any realization,
\begin{align*}
\limsup_{n \rightarrow \infty} d\Big(\frac{1}{n}(\delta_{X_1}+\cdots+\delta_{X_n}), \frac{1}{n-j}(\delta_{X_{j+1}}+\cdots+\delta_{X_n})\Big)\leq  \limsup_{n \rightarrow \infty} \frac{2j}{n} = 0
\end{align*}
($d$ denotes the metric \eqref{metric}).

4. For the probability measure $\mu\ll dx$ satisfying the condition
\begin{align} \label{230}
\int \phi_1d\mu=v_1,\cdots,\int \phi_{k-1}d\mu=v_{k-1},\int \phi_kd\mu\leq v_k,
\end{align}  the rate function $J$ in Theorem \ref{theorem 2.2} can be written in terms of the differential entropy $h(\cdot)$: for some constant $C$,
\begin{align} \label{2300}
H(\mu | \lambda ) &= \int \log(\frac{d\mu }{d\lambda})d\mu = \int \log(\frac{d\mu }{dx})d\mu +\int \log(\frac{dx}{d\lambda})d\mu  \nonumber \\
&= -h(\mu) + \int \phi_1 d\mu + C = -h(\mu) + v_1 + C.
\end{align}
In general, if the reference measure $\lambda$ on $(0,\infty)$ is given by
\begin{align*}
\lambda = \frac{1}{Z}e^{p_1\phi_1+\cdots+p_k\phi_k}dx
\end{align*}
for some $(p_1,\cdots,p_k)$ for which the normalizing constant $Z$ is finite,
then the sequence $(L_n,S^1_n,\cdots,S^k_n)$ under $\lambda^{\otimes \mathbb{N}}$ satisfies the weak LDP with the rate function $J$  given by
\begin{align*}
J&(\mu,v_1,\cdots,v_k)\\
&=\begin{cases} H(\mu | \lambda) -p_k(v_k-\int \phi_k d\mu) &\text{if} \ \int \phi_1d\mu=v_1,\cdots,\int \phi_{k-1}d\mu=v_{k-1},\int \phi_kd\mu\leq v_k, \\
\infty &\text{otherwise}. \end{cases}
\end{align*}
For a probability measure $\mu \ll dx$ satisfying the condition \eqref{230}, the rate function $J$ can be written as
\begin{align*}
H(\mu | \lambda) -p_k(v_k-\int \phi_k d\mu) &= -h(\mu) -p_1\int\phi_1 d\mu - \cdots - p_k \int \phi_k d\mu -p_k(v_k-\int \phi_k d\mu) \\
&=  -h(\mu) -p_1v_1-\cdots- p_kv_k + C.
\end{align*}
\end{remark}

We need the following lemma to ensure the existence and uniqueness of a minimizer of the relative entropy $H(\cdot | \lambda)$ over the set \eqref{11}.
\begin{lemma}  \label{lemma 2.4}
The following set is closed, compact, and convex:
\begin{align*}
\mathcal{T}:=\Big\{\mu\in \mathcal{M}_1(\R^+)  \Big \vert \int \phi_1d\mu=v_1,\cdots,\int \phi_{k-1}d\mu=v_{k-1},\int \phi_kd\mu\leq v_k\Big\}.
\end{align*}

\end{lemma}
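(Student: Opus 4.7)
My plan is to verify the three properties in turn, with the hierarchical condition (C3) doing the main structural work.

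Convexity is immediate: for $\mu,\nu\in\mathcal{T}$ and $t\in[0,1]$, linearity of $\mu\mapsto\int\phi_i\,d\mu$ gives $\int\phi_i\,d(t\mu+(1-t)\nu)=v_i$ for $i\leq k-1$, and the same linearity shows $\int\phi_k\,d(t\mu+(1-t)\nu)\leq v_k$, so $t\mu+(1-t)\nu\in\mathcal{T}$.

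For closedness, I would take $\mu_n\in\mathcal{T}$ with $\mu_n\to\mu$ weakly in $\mathcal{M}_1(\R^+)$. Since $\phi_k$ is continuous and non-negative, the Portmanteau theorem (or Fatou) yields $\int\phi_k\,d\mu\leq\liminf_n\int\phi_k\,d\mu_n\leq v_k$, so the inequality constraint passes to the limit. The equality constraints for $i<k$ require passing to the limit against the unbounded $\phi_i$, for which I would use a uniform-integrability argument based on (C3). Iterating $\phi_i^\kappa<\phi_{i+1}$ along the hierarchy gives $\phi_i(x)^{\alpha_i}<\phi_k(x)$ for all sufficiently large $x$, with $\alpha_i:=\kappa^{k-i}>1$; combining this with the local boundedness of $\phi_i$ on compact subintervals of $(0,\infty)$ yields the uniform bound $\sup_n\int\phi_i^{\alpha_i}\,d\mu_n<\infty$. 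Since $\alpha_i>1$ and this bound holds uniformly, the family $\{\phi_i\}$ is uniformly integrable under $\{\mu_n\}$, so Vitali's convergence theorem gives $\int\phi_i\,d\mu_n\to\int\phi_i\,d\mu=v_i$, placing $\mu$ in $\mathcal{T}$.

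For compactness, by Prokhorov's theorem it suffices to establish tightness of $\mathcal{T}$. Tightness at infinity is a direct Chebyshev-type estimate using the $\phi_k$-constraint: for any $R>0$ and any $\mu\in\mathcal{T}$, since $\phi_k$ is increasing,
\begin{equation*}
\mu(\{x>R\})\leq \frac{1}{\phi_k(R)}\int\phi_k\,d\mu\leq\frac{v_k}{\phi_k(R)},
\end{equation*}
which tends to $0$ uniformly in $\mu\in\mathcal{T}$ as $R\to\infty$. Combining this uniform tail control with the closedness argument (which rules out the weak limit sitting outside $\mathcal{T}$) and Prokhorov gives compactness in the weak topology.

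The step I expect to be the main obstacle is the uniform-integrability argument in the closedness proof. The hierarchy (C3) is only a comparison at infinity, so one must patch it with the local boundedness of the $\phi_i$ on $(0,R]$ to produce a genuine uniform $L^{\alpha_i}$-bound strong enough to invoke Vitali. This is precisely where the hierarchical structure of the observables plays its essential role: without the strict growth gap $\kappa>1$ one would only get $\sup_n\int\phi_i\,d\mu_n<\infty$, which is not by itself sufficient to pass to the limit in the equality constraints.
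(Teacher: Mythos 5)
Your proof is correct and follows essentially the same route as the paper: convexity by linearity, tightness via the Chebyshev bound $\mu(\{x>R\})\leq v_k/\phi_k(R)$ plus Prokhorov, and closedness by using (C3) together with the uniform bound $\int \phi_k\,d\mu_n\leq v_k$ to control the tails of $\phi_i$ uniformly in $n$. Your packaging of that tail control as a de la Vall\'ee Poussin/Vitali uniform-integrability argument is just a rephrasing of the paper's truncation-plus-Portmanteau step, so there is no substantive difference.
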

\begin{proof}
Suppose that $\mu_n\in \mathcal{T}$ and $\mu_n\rightarrow \mu$. We first show the closedness of $\mathcal{T}$ by proving that $\mu\in \mathcal{T}$. According to the Portmanteau theorem, $\int \phi_k d\mu \leq v_k$ is obvious.  Fix any $1\leq i\leq k-1$ and let us show that $\int \phi_i d\mu = v_i$. Using Assumption \ref{assume0} and the fact that $\int \phi_k d\mu_n \leq v_k$, one can conclude that for any $\epsilon>0$, there exists $M>0$ such that for all $n$,
\begin{align*}
 \int \phi_i \1_{[M,\infty)} d\mu_n <\epsilon.
\end{align*}
 This implies that $\int \phi_i \1_{(0,M)} d\mu_n > v_i-\epsilon$. Since $\mu_n\rightarrow \mu$ and $\phi_i \1_{(0,M)}  \in C_b(\R^+)$, we have
\begin{align*}
\lim_n \int \phi_i \1_{(0,M)}  d\mu_n  =  \int  \phi_i  \1_{(0,M)}  d\mu.
\end{align*}  Therefore, we have $\int \phi_i \1_{(0,M)}  d\mu \geq v_i-\epsilon$, and since $\epsilon$ is arbitrary, we obtain $\int \phi_i d\mu \geq v_i$. On the other hand, thanks to the Portmanteau theorem,  $\int \phi_i d\mu \leq v_i$. Thus, $\int \phi_i d\mu = v_i$, which  concludes the closedness of $\mathcal{T}$.

Compactness of $\mathcal{T}$ immediately follows from the Prokhorov's theorem and Assumption \ref{assume0}. Convexity of $\mathcal{T}$ is also obvious.
\end{proof}

According to the Sanov's theorem, the sequence of empirical distributions $L_n$  satisfy the LDP with a rate function $H(\cdot | \lambda)$. Also, due to the generalized version of Cram\'er's theorem (see  \cite[Theorem 6.1.3]{dz}), the sequence $(S^1_n,\cdots,S^k_n)$  satisfies the weak LDP with a rate function $I(v_1,\cdots,v_k)$ which is the Legendre transform of the logarithmic moment generating function:
\begin{align*}
H(p_1,\cdots,p_k) = \log \int e^{p_1\phi_1+\cdots+p_k\phi_k} d\lambda.
\end{align*}
 Since a map $\mu \rightarrow \int \phi_i d\mu$ may not be continuous, the rate function $I$ cannot be directly obtained from the Sanov's theorem as a simple application of the standard contraction principle. However, applying Theorem \ref{theorem 2.2}, one can obtain the non-continuous version of the contraction principle. It reveals the relation between two rate functions $H(\cdot | \lambda)$ and $I$.

\begin{proposition} \label{prop 2.4} Under the reference measure $\mathbb{P}$, the sequence $(S^1_n, \cdots, S^k_n)$ in $(\R^+)^k$ satisfies the weak LDP with a rate function $I(v_1,\cdots,v_k)$ given by
\begin{align} \label{240}
I(v_1,\cdots,v_k)&=\inf_{\mu\in \mathcal{M}_1(\R^+)} \Big\{H(\mu | \lambda)\ \Big \vert \int \phi_1d\mu=v_1,\cdots,\int \phi_{k-1}d\mu=v_{k-1},\int \phi_kd\mu\leq v_k \Big\}.
\end{align}
Also, $I(v_1,\cdots,v_k)$ is the Legendre transform of $H(p_1,\cdots,p_k)$ defined in \eqref{ham}.
\end{proposition}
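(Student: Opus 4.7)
The plan is to derive the two asserted forms of $I$ via two independent routes and then identify them by uniqueness of the weak LDP rate function.

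First, I would obtain the variational formula \eqref{240} through contraction applied to Theorem \ref{theorem 2.2}. The projection $\pi : \mathcal{M}_1(\R^+) \times (\R^+)^k \to (\R^+)^k$ onto the last $k$ coordinates is continuous, so one expects the push-forward $(S^1_n, \ldots, S^k_n) = \pi(L_n, S^1_n, \ldots, S^k_n)$ to satisfy the weak LDP with rate function $\inf_\mu J(\mu, \cdot)$. By the explicit form of $J$ from Theorem \ref{theorem 2.2}, this infimum is exactly the right-hand side of \eqref{240}. The weak LDP lower bound on an open $U \subset (\R^+)^k$ is immediate from the lower bound in Theorem \ref{theorem 2.2} applied to the open set $\pi^{-1}(U) = \mathcal{M}_1(\R^+) \times U$, together with the identity $\inf_{\pi^{-1}(U)} J = \inf_U (\inf_\mu J(\mu, \cdot))$.

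Second, for the Legendre transform characterization, I would apply the abstract Cram\'er theorem in $\R^k$ (Theorem 6.1.3 of Dembo--Zeitouni) directly to $(S^1_n, \ldots, S^k_n)$, viewed as the empirical mean of the i.i.d.\ $\R^k$-valued vectors $Y_i := (\phi_1(X_i), \ldots, \phi_k(X_i))$ under $\mathbb{P}$. The cumulant generating function is $\log \E[e^{\sum p_i \phi_i(X_1)}] = H(p_1, \ldots, p_k)$, and the theorem yields a weak LDP with rate function equal to its Legendre transform. Since weak LDP rate functions are lower semicontinuous and uniquely determined by the LDP (via the negative log asymptotics on small open balls around each point), both routes must produce the same function, which establishes the identification.

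The main obstacle is the upper bound in the contraction step: the fiber $\pi^{-1}(K) = \mathcal{M}_1(\R^+) \times K$ is not compact, so the contraction principle does not transfer upper bounds from compacts of the product space to compact $K \subset (\R^+)^k$ in the na\"ive way. To handle this, I would exploit exponential tightness of $\{L_n\}$ coming from Sanov's theorem: for every $M > 0$ there is a compact $K_M \subset \mathcal{M}_1(\R^+)$ with $\mathbb{P}(L_n \notin K_M) \leq e^{-Mn}$. Splitting
$$\mathbb{P}((S^1_n, \ldots, S^k_n) \in K) \leq \mathbb{P}((L_n, S^1_n, \ldots, S^k_n) \in K_M \times K) + e^{-Mn},$$
applying the weak LDP upper bound of Theorem \ref{theorem 2.2} on the compact set $K_M \times K$, and sending $M \to \infty$ yields the desired bound, once one checks $\lim_M \inf_{K_M \times K} J = \inf_K I$. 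The latter is justified by choosing a near-minimizer $\mu^\ast$ of $\inf_\mu J(\mu, v^\ast)$ for $v^\ast$ near the infimum in $K$ (using lower semicontinuity of $J$ together with the compactness from Lemma \ref{lemma 2.4}) and noting that $\mu^\ast \in K_M$ for $M$ sufficiently large.
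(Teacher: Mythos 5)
Your proposal is essentially correct and its overall architecture matches the paper's: both arguments obtain \eqref{240} by contracting the weak LDP of Theorem \ref{theorem 2.2} along the projection $\pi:(\mu,v_1,\dots,v_k)\mapsto(v_1,\dots,v_k)$, and both obtain the Legendre-transform statement by applying the abstract Cram\'er theorem \cite[Theorem 6.1.3]{dz} directly to $(S^1_n,\dots,S^k_n)$ and then invoking uniqueness of the weak LDP rate function. Where you genuinely differ is in how the upper bound on compacts is rescued despite $J$ not being a good rate function: the paper follows the proof of the contraction principle \cite[Theorem 4.2.1]{dz} and verifies by hand that the sub-level sets of $\inf_\mu J(\mu,\cdot)$ are exactly the $\pi$-images of the sub-level sets of $J$ and that these are closed (using Lemma \ref{lemma 2.4} and the weak compactness of $\{H(\cdot|\lambda)\le c\}$), whereas you exploit exponential tightness of $\{L_n\}$ coming from Sanov's theorem and split $\mathbb{P}((S^1_n,\dots,S^k_n)\in K)$ into the contribution on $K_M\times K$ plus an $e^{-Mn}$ error. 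Your splitting works, and is in fact simpler than you make it: since $K_M\times K\subset\mathcal{M}_1(\R^+)\times K$, the inequality $\inf_{K_M\times K}J\ge\inf_K\inf_\mu J(\mu,\cdot)$ is automatic, so the near-minimizer verification of the full limit identity is unnecessary for the upper bound. (The paper uses the same exponential-tightness observation elsewhere, in the proof of Lemma \ref{prop 2.6}.)

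The one point you leave unaddressed is the lower semicontinuity of $I=\inf_\mu J(\mu,\cdot)$, equivalently the attainment of the infimum in \eqref{240}. This is not cosmetic: a weak-LDP rate function is by definition lower semicontinuous, and your final uniqueness step (small-ball asymptotics) only identifies the Legendre transform of $H$ with the lower semicontinuous regularization of $\inf_\mu J(\mu,\cdot)$; to upgrade this to the exact equality claimed in \eqref{240} you must know that $\inf_\mu J(\mu,\cdot)$ is itself lower semicontinuous. This is precisely what the paper checks: by Lemma \ref{lemma 2.4} the constraint set is closed, and since the sub-level sets of $H(\cdot|\lambda)$ are weakly compact the infimum is attained and the projected level sets are closed. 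You already have these ingredients in hand (you cite Lemma \ref{lemma 2.4} and the lower semicontinuity of $J$ for your near-minimizer step), so the fix is short, but it needs to be stated for the proposition as formulated.
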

\begin{proof}
Let us apply the contraction principle to the projection 
$\pi:(L_n,S^1_n,\cdots,S^k_n)\rightarrow (S^1_n,\cdots,S^k_n)$. Since $J$ is not necessarily a good rate function, in order that contraction principle works, we need to check that for $I(v_1,\cdots,v_k)$ defined in \eqref{240},
 \begin{align} \label{241}
\{(v_1,\cdots,v_k) | I(v_1,\cdots,v_k)\leq c\} = \pi(\{(\mu,v_1,\cdots,v_k) |J(\mu,v_1,\cdots,v_k) \leq c \})
\end{align}
holds, and that this set is a closed set (see the proof of   \cite[Theorem 4.2.1]{dz}). Note that since
\begin{align*}
\mathcal{T}:=\Big\{\mu\in \mathcal{M}_1(\R^+) \Big \vert \int \phi_1d\mu=v_1,\cdots,\int \phi_{k-1}d\mu=v_{k-1},\int \phi_kd\mu\leq v_k\Big\}
\end{align*}  is a closed  set according to Lemma \ref{lemma 2.4}, the infimum of $H(\cdot | \lambda)$ is attained over $\mathcal{T}$ when $I(v_1,\cdots,v_k)<\infty$.  This implies that the equality in \eqref{241} holds. Also, since the sub-level set $\{H(\cdot | \lambda)\leq c\}$ is compact with respect to the weak topology, under the projection $\pi$, the image of $\{(\mu,v_1,\cdots,v_k) | J(\mu,v_1,\cdots,v_k) \leq c \}$  is closed. 
Therefore, the contraction principle is applicable, and the sequence $(S^1_n,\cdots,S^k_n)$ satisfies the weak LDP with a rate function 
\begin{align*}
I(v_1,\cdots,v_k):= \inf_{\mu\in \mathcal{M}_1(\R^+)} J(\mu,v_1,\cdots,v_k),
\end{align*}
which immediately implies \eqref{240}. Also, due to the uniqueness property of the rate function, the second part of proposition is obvious.
\end{proof}
\begin{remark} \label{remark 2.5}
From the definition of the admissible set, for $(a_1,\cdots,a_k)\in \mathcal{A}$,  \begin{align*}
\Big\{ \mu\in \mathcal{M}_1(\R^+) \Big \vert h(\mu)\neq -\infty, \int \phi_1d\mu=a_1,\cdots,\int \phi_{k-1}d\mu=a_{k-1},\int \phi_kd\mu\leq a_k\Big\} 
\end{align*} is a non-empty set. Thus, according to Proposition \ref{prop 2.4},  whenever $(a_1,\cdots,a_k)\in \mathcal{A}$, $I(a_1,\cdots,a_k)<\infty$ (see the identity \eqref{2300}). This implies that for $(a_1,\cdots,a_k)\in \mathcal{A}$, the microcanonical distribution $\mathbb{P}((X_1,\cdots,X_n)\in \cdot | C^\delta_n)$ is well-defined since for each $\delta>0$,
\begin{align*}
\liminf_{n\rightarrow \infty} \frac{1}{n}\log \mathbb{P}(C^\delta_n) \geq -I(a_1,\cdots,a_k) > -\infty.
\end{align*}
On the other hand, when $a_k < g_1(a_1,\cdots,a_{k-1})$, it is obvious that    $I(a_1,\cdots,a_k)=\infty$.
\end{remark}
Now, let us define $\lambda^* = \lambda^*(a_1,\cdots,a_k)$ to be a unique minimizer of the relative entropy $H(\cdot|\lambda)$ over the set
\begin{align*}
\Big\{\mu\in \mathcal{M}_1(\R^+) \Big \vert \int \phi_1d\mu=a_1,\cdots,\int \phi_{k-1}d\mu=a_{k-1},\int \phi_kd\mu\leq a_k\Big\}.
\end{align*} 
The existence and uniqueness of a minimizer follows from Lemma \ref{lemma 2.4} and the lower semicontinuity, compact sublevel sets,  strict convexity properties of the relative entropy functional $H(\cdot | \lambda)$. Note that $\lambda^*$ is also a unique maximizer of the differential entropy $h(\cdot)$ due to the identity \eqref{2300}.

\subsection{Proof of Theorem \ref{theorem 1.1}} \label{section 3.3} In this section, we conclude the proof of Theorem \ref{theorem 1.1}. As an application of the Gibbs conditioning principle, combined with the large deviation result for the sequence $(L_n,S^1_n,\cdots,S^k_n)$ obtained in  Theorem \ref{theorem 2.2}, one can prove the following result:
\begin{lemma} \label{prop 2.6}
For any open set $G$ containing $\lambda^*$,
\begin{align*}
\limsup_{\delta \rightarrow 0} \limsup_{n\rightarrow \infty} \frac{1}{n}\log \mathbb{P}(L_n \notin G | C_n^\delta)<0.
\end{align*}
\end{lemma}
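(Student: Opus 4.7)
The plan is to invoke the Gibbs-conditioning strategy underlying Theorem~\ref{theorem 2.1}, adapted to the weak-LDP setting of Theorem~\ref{theorem 2.2}. I split the conditional probability as
\begin{align*}
\mathbb{P}(L_n \notin G \mid C_n^\delta) = \frac{\mathbb{P}(\{L_n \notin G\}\cap C_n^\delta)}{\mathbb{P}(C_n^\delta)},
\end{align*}
estimate the numerator from above via Theorem~\ref{theorem 2.2} and the denominator from below via Proposition~\ref{prop 2.4}, and then show that the resulting rate gap is strictly positive as $\delta\to 0$.

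\textbf{Compact confinement.} On $C_n^\delta$ we have $\int \phi_k\, dL_n = S_n^k \le a_k+\delta$. Markov's inequality together with $\phi_k(x)\to\infty$ (Assumption~\ref{assume0} (C1)) shows that $K_\delta:=\{\mu\in\mathcal{M}_1(\R^+):\int\phi_k\,d\mu\le a_k+\delta\}$ is uniformly tight, and the lower semicontinuity of $\mu\mapsto\int\phi_k\,d\mu$ makes it closed; hence $K_\delta$ is compact. Consequently $\{L_n\notin G\}\cap C_n^\delta$ is contained in $\{(L_n,S_n^1,\dots,S_n^k)\in F_\delta\}$ where
\begin{align*}
F_\delta := (G^c \cap K_\delta) \times \prod_{i=1}^{k}[a_i-\delta,a_i+\delta],
\end{align*}
and $F_\delta$ is compact.

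\textbf{Upper and lower bounds.} The weak LDP upper bound of Theorem~\ref{theorem 2.2}, applied on the compact set $F_\delta$, yields
\begin{align*}
\limsup_{n\to\infty}\frac{1}{n}\log \mathbb{P}(\{L_n\notin G\}\cap C_n^\delta) \le -\alpha(\delta),\qquad \alpha(\delta):=\inf_{(\mu,v)\in F_\delta} J(\mu,v).
\end{align*}
Since $(a_1,\dots,a_k)$ is an interior point of $\prod_i[a_i-\delta,a_i+\delta]$, the weak LDP lower bound combined with Proposition~\ref{prop 2.4} gives
\begin{align*}
\liminf_{n\to\infty}\frac{1}{n}\log \mathbb{P}(C_n^\delta) \ge -I(a_1,\dots,a_k)=-H(\lambda^*\mid \lambda).
\end{align*}
Combining, it suffices to prove $\liminf_{\delta\to 0}\alpha(\delta) > H(\lambda^*\mid \lambda)$.

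\textbf{Strict inequality via uniqueness of $\lambda^*$.} Suppose for contradiction that the liminf equals $H(\lambda^*\mid\lambda)$. Then there exist $\delta_n\downarrow 0$, $\mu_n\in G^c \cap K_{\delta_n}$, and $v_n\in\prod_i[a_i-\delta_n,a_i+\delta_n]$ with $J(\mu_n,v_n) \le H(\lambda^*\mid \lambda)+o(1)$. All $\mu_n$ lie in the compact set $K_1$, so along a subsequence $\mu_n\to\mu^*\in G^c$ (since $G^c$ is closed), and $v_n\to(a_1,\dots,a_k)$. Lower semicontinuity of the rate function $J$ forces $J(\mu^*,a_1,\dots,a_k)\le H(\lambda^*\mid\lambda)<\infty$, which by the explicit form of $J$ in Theorem~\ref{theorem 2.2} means $\int\phi_i\,d\mu^*=a_i$ for $i<k$, $\int\phi_k\,d\mu^*\le a_k$, and $H(\mu^*\mid\lambda)\le H(\lambda^*\mid\lambda)$. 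Uniqueness of the minimizer $\lambda^*$ over this constraint set forces $\mu^*=\lambda^*\in G$, contradicting $\mu^*\in G^c$.

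\textbf{Main obstacle.} The delicate point is the limit argument in the final step: because $\phi_i$ are unbounded and the topology on $\mathcal{M}_1(\R^+)$ is weak, mass of $\mu_n$ could a priori escape to infinity and a weak limit would only give $\int\phi_i\,d\mu^* \le a_i$. The compact confinement $\mu_n\in K_{\delta_n}\subset K_1$ from Step~1 is what closes this gap: the tightness forced by the constraint $\int\phi_k\,dL_n\le a_k+\delta$ guarantees that the weak limit $\mu^*$ actually realizes the constraints with equality for $i<k$, so the uniqueness of $\lambda^*$ applies.
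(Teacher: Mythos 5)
Your proof is correct, and it reaches the conclusion by essentially re-deriving the Gibbs conditioning estimate by hand rather than citing it. The paper's proof is shorter: it defines the closed sets $F=\cap_{\delta}F_\delta$ built from the constraints $|S^i_n-a_i|\le\delta$, notes that the infimum of $J$ over the exact constraint is uniquely attained at $(\lambda^*,a_1,\dots,a_k)$, and then invokes the Gibbs conditioning principle (Theorem \ref{theorem 2.1}), justifying its applicability despite $J$ not being a good rate function by the compactness of $F_\delta$ in the $(\R^+)^k$-variable together with exponential tightness of $\mathbb{P}(L_n\in\cdot)$. You instead split $\mathbb{P}(L_n\notin G\mid C_n^\delta)$ into numerator and denominator, bound the denominator below by $-I(a_1,\dots,a_k)=-H(\lambda^*\mid\lambda)$ via Proposition \ref{prop 2.4}, and bound the numerator above by restricting to the compact set $F_\delta=(G^c\cap K_\delta)\times\prod_i[a_i-\delta,a_i+\delta]$, where the compactness of $K_\delta=\{\int\phi_k\,d\mu\le a_k+\delta\}$ is extracted directly from the $k$-th constraint; the strict rate gap then follows from a compactness--lower-semicontinuity--uniqueness argument. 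This buys a self-contained argument that makes transparent both why the weak LDP upper bound suffices (the constraint itself confines $L_n$ to a compact set) and where strictness comes from (uniqueness of the minimizer $\lambda^*$), at the cost of redoing work that Theorem \ref{theorem 2.1} packages. One small point of precision: in your closing remark, tightness alone would only give $\int\phi_i\,d\mu^*\le a_i$; what actually forces equality for $i<k$ is the uniform integrability of $\phi_i$ under the bound $\int\phi_k\,d\mu_n\le a_k+\delta_n$ coming from condition (C3) — this is exactly the content of Lemma \ref{lemma 2.4}, and it is also what underlies the lower semicontinuity of $J$ that you invoke, so your argument stands as written.
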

\begin{proof}
For each $0<\delta<\min \{a_1,\cdots,a_k\}$, define closed sets $F,F_\delta\subset \mathcal{M}_1(\R^+) \times (\R^+)^k$ by 
\begin{align*}
F=\{(L_n,S^1_n,\cdots,S^k_n)|S^1_n=a_1, \cdots,S^k_n=a_k\},
\end{align*}
\begin{align*}
F_\delta=\{(L_n,S^1_n,\cdots,S^k_n)|S^k_n\in [a_1-\delta,a_1+\delta],\cdots, S^k_n\in[a_k-\delta,a_k+\delta]\}.
\end{align*}
It is obvious that $F=\cap_{\delta>0} F_\delta$ and $F\subset  (F_\delta)^{\mathrm{o}}$. Since the infimum of $J(\mu,v_1,\cdots,v_k)$ over the constraint $v_1=a_1,\cdots,v_k=a_k$ is attained at $(\mu,v_1,\cdots,v_k)=(\lambda^*,a_1,\cdots,a_k)$, and $G\times (\R^+)^k $ is an open neighborhood of $(\lambda^*,a_1,\cdots,a_k)$,
according to the Gibbs conditioning principle Theorem \ref{theorem 2.1},
\begin{align*}
\limsup_{\delta \rightarrow 0} \limsup_{n\rightarrow \infty} &\frac{1}{n}\log \mathbb{P}(L_n \notin G | C_n^\delta)\\
&=\limsup_{\delta \rightarrow 0} \limsup_{n\rightarrow \infty} \frac{1}{n}\log \mathbb{P}((L_n,S^1_n,\cdots,S^k_n) \in G^c\times \R^{k} | C_n^\delta)<0.
\end{align*}
Note that even though the rate function $J$ is not necessarily a good rate function, Theorem \ref{theorem 2.1} is applicable since each $F_\delta$ is compact in $(\R^+)^k$-variable and $P(L_n\in \cdot)$ is exponentially tight.
\end{proof}

As a corollary of the previous lemma, one can finish the proof of Theorem \ref{theorem 1.1}.
\begin{proof}[Proof of Theorem \ref{theorem 1.1} ]
Recall that a unique maximizer of the differential entropy $h(\cdot)$ over the set \eqref{11} coincides with a unique minimizer of the relative entropy $H(\cdot | \lambda)$ over the same set \eqref{11} (see the identity \eqref{2300}). As a consequence of Lemma \ref{prop 2.6}, we have
\begin{align*}
\lim_{\delta \rightarrow 0} \lim_{n\rightarrow \infty} \mathbb{P}(L_n\in \cdot | C_n^\delta) = \delta_{\lambda^*}.
\end{align*}
According to \cite[Proposition 2.2]{sz}, this implies that  for any fixed positive integer $j$,
\begin{align*}
\lim_{\delta \rightarrow 0} \lim_{n\rightarrow \infty} \mathbb{P}((X_1,\cdots,X_j)\in \cdot | C^\delta_n) \to (\lambda^*)^{\otimes j}.
\end{align*}
\end{proof}

\begin{remark} \label{uniform}
Note that according to \eqref{321},  Lemma \ref{prop 2.6} also holds under the uniform distribution on the constraint $C^\delta_n$, which is of our main interest. Thus, the result in Theorem \ref{theorem 1.1} holds under the uniform distribution as well.
\end{remark}

\subsection{Characterization of the maximizer in Theorem \ref{theorem 1.1}} \label{section 3.4}
In this section, we characterize the (unique) maximizer of the differential entropy $h(\cdot)$ over the set \eqref{11}. Interestingly, it turns out that the maximizers have different forms in the case of $(a_1,\cdots,a_{k-1})\in \mathcal{S}_1$ and $(a_1,\cdots,a_{k-1})\in \mathcal{S}_2$. We first analyze the sets $\mathcal{S}_1$, $\mathcal{S}_2$ and the function $g_2$ defined in \eqref{def1} in a more detailed way.

\begin{remark} \label{remark 2.9}
For $(v_1,\cdots,v_{k-1})\in \mathcal{S}_1$, there exist  unique 
$p_1,\cdots,p_{k-1}$ satisfying \eqref{a1}. This can be verified  using the following facts: \\
(i) if $(v_1,\cdots,v_{k-1},z)\in \partial H(p_1,\cdots,p_{k-1},0)$ for some $p_1,\cdots,p_{k-1}$, then for all $z<w$, \\
 $(v_1,\cdots,v_{k-1},w)\in \partial H(p_1,\cdots,p_{k-1},0)$, \\
(ii) the rate function $I$ is differentiable on $\mathcal{A}$. \\
Since $H(p_1,\cdots,p_{k-1},p_k) = \infty$ for $p_k>0$, (i) follows from the definition of the subdifferential of convex functions. (ii) immediately follows from the essentially strictly convexity of $H$ and the fact that $\mathcal{A} \subset \text{dom}(I)$. In fact, the essentially strictly convexity of $H$ implies the essentially smoothness of $I$ (see   \cite[Theorem 26.3]{convex}). Since $\mathcal{A} \subset \text{dom}(I)$ and $\mathcal{A}$ is open, the essentially smoothness of $I$ implies that $I$ is differentiable on $\mathcal{A}$.

Suppose that there exist $(p_1,\cdots,p_{k-1})$ and $(p'_1,\cdots,p'_{k-1})$ satisfying \eqref{a1}. Using the fact (i), there exists $v_k$ such that $(v_1,\cdots,v_k)\in \mathcal{A}$ and
\begin{align*}
(v_1,\cdots,v_{k-1},v_k)\in \partial H(p_1,\cdots,p_{k-1},0),\quad (v_1,\cdots,v_{k-1},v_k)\in \partial H(p'_1,\cdots,p'_{k-1},0).
\end{align*} Since $H$ is convex and lower semicontinuous, using the duality of $H$ and $I$, we have
\begin{align*}
(p_1,\cdots,p_{k-1},0),(p'_1,\cdots,p'_{k-1},0)\in \partial I(v_1,\cdots,v_{k-1},v_k).
\end{align*}
Since $I$ is differentiable on $\mathcal{A}$, $(p_1,\cdots,p_{k-1})$ satisfying \eqref{a1} is unique. 
\end{remark}

The following lemma reveals useful properties of the sets $\mathcal{S}_1$, $\mathcal{S}_2$, and provides a formula for the function $g_2$.
\begin{lemma} \label{prop 2.9}
Suppose that $(v_1,\cdots,v_{k-1})\in \mathcal{S}_1$. Then, for $p_1,\cdots,p_{k-1}$ satisfying \eqref{a1},
\begin{align} \label{291}
v_i = \frac{1}{Z}\int \phi_i e^{p_1\phi_1+\cdots+p_{k-1}\phi_{k-1}}d\lambda
\end{align}
($Z$ is a normalizing constant $Z=\int  e^{p_1\phi_1+\cdots+p_{k-1}\phi_{k-1}}d\lambda$) for $1\leq i\leq k-1$ and
\begin{align} \label{292}
g_2(v_1,\cdots,v_{k-1}) = \frac{1}{Z}\int \phi_k e^{p_1\phi_1+\cdots+p_{k-1}\phi_{k-1}}d\lambda.
\end{align} 
Suppose that $(v_1,\cdots,v_{k-1})\in \mathcal{S}_2$. Then, for any $v_k$ such that $(v_1,\cdots,v_k)\in \mathcal{A}$, there exist $p_1,\cdots,p_k$ such that $p_k<0$ and
\begin{align} \label{294}
v_i = \frac{1}{Z}\int \phi_i e^{p_1\phi_1+\cdots+p_{k}\phi_{k}}d\lambda
\end{align}
($Z$ is a normalizing constant $Z=\int  e^{p_1\phi_1+\cdots+p_{k}\phi_{k}}d\lambda$) for $1\leq i\leq k$.
\end{lemma}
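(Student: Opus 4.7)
The plan is to reduce both claims to convex analysis on the Fenchel conjugate pair $(H, I)$ from Proposition \ref{prop 2.4}, exploiting the elementary fact that on the open stratum $\{p_k < 0\}$ of the domain $D$ (see \eqref{D}) the function $H$ is $C^\infty$ with gradient given by the tilted mean of $(\phi_1, \ldots, \phi_k)$, while on the boundary face $\{p_k = 0\}$ the subdifferential in the $p_k$-direction becomes a one-sided half-line.

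For \eqref{291} and \eqref{292}, fix $(v_1, \ldots, v_{k-1}) \in \mathcal{S}_1$ and let $(p_1, \ldots, p_{k-1})$ be the (unique, by Remark \ref{remark 2.9}) tuple such that $(v_1, \ldots, v_{k-1}) \in \pi_1(\partial H(p_1, \ldots, p_{k-1}, 0))$; pick any $v_k$ with $(v_1, \ldots, v_k) \in \partial H(p_1, \ldots, p_{k-1}, 0)$. Restricting the supporting hyperplane inequality to test points of the form $(q_1, \ldots, q_{k-1}, 0)$ shows $(v_1, \ldots, v_{k-1}) \in \partial \widetilde{H}(p_1, \ldots, p_{k-1})$, where $\widetilde{H}(q_1, \ldots, q_{k-1}) := H(q_1, \ldots, q_{k-1}, 0)$. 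Differentiation under the integral, controlled via Assumption \ref{assume0}(C3) (which bounds each $\phi_i$ by a fractional power of $\phi_{k-1}$), shows $\widetilde{H}$ is $C^1$ in a relative neighborhood of $(p_1, \ldots, p_{k-1})$ inside $\{p_k = 0\}$, with
\[\partial_i \widetilde{H}(p_1, \ldots, p_{k-1}) = \frac{1}{Z}\int \phi_i \exp\Big(\sum_{j < k} p_j \phi_j\Big)\, d\lambda,\]
which, combined with uniqueness of the subgradient of a differentiable convex function, proves \eqref{291}. For \eqref{292}, let $g(t) := H(p_1, \ldots, p_{k-1}, t)$; then $g(t) = +\infty$ for $t > 0$, and $g$ is convex and finite on $(-\infty, 0]$, so one-sided convex calculus gives $\pi_2(\partial H(p_1, \ldots, p_{k-1}, 0)) = [g'(0^-), \infty)$. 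A monotone convergence argument as $t \uparrow 0$ (using $\phi_k > 0$ and $e^{t\phi_k} \uparrow 1$) identifies $g'(0^-) = \frac{1}{Z}\int \phi_k \exp(\sum_{j<k} p_j \phi_j)\, d\lambda$, yielding \eqref{292} and the infimum description $g_2 = g'(0^-)$.

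For \eqref{294}, let $v = (v_1, \ldots, v_k) \in \mathcal{A}$ with $(v_1, \ldots, v_{k-1}) \in \mathcal{S}_2$. By Remark \ref{remark 2.9}, $I$ is differentiable on $\mathcal{A}$, so $\partial I(v) = \{p\}$ is a singleton $p = (p_1, \ldots, p_k)$, and Fenchel--Young duality gives $v \in \partial H(p)$. If $p_k = 0$, then $(v_1, \ldots, v_{k-1}) \in \pi_1(\partial H(p_1, \ldots, p_{k-1}, 0)) \subseteq \mathcal{S}_1$, contradicting $(v_1, \ldots, v_{k-1}) \in \mathcal{S}_2 = \mathcal{A}_1 \cap \mathcal{S}_1^c$. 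Hence $p_k < 0$, placing $p$ in the open stratum where $H$ is smooth, and the interior gradient formula for the log-moment generating function yields \eqref{294}. The main obstacle is the interchange of derivative and integral on the boundary face $\{p_k = 0\}$: one needs a uniform integrable dominator for the tilted integrand under small perturbations of $(p_1, \ldots, p_{k-1})$ within $\{p_k = 0\}$, which is precisely where the hierarchy $\phi_i^\kappa < \phi_{i+1}$ in Assumption \ref{assume0}(C3) does the real work, the rest being pure convex analysis.
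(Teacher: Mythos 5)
Your treatment of the $\mathcal{S}_2$ case \eqref{294} is essentially the paper's argument (differentiability of $I$ on $\mathcal{A}$, Fenchel--Young, exclusion of $p_k=0$ by the definition of $\mathcal{S}_1$, then the gradient formula in the open stratum $\{p_k<0\}$), and that part is fine. The gap is in your proof of \eqref{291}. You restrict the subgradient information to the face $\{p_k=0\}$ and claim that $\widetilde H(q_1,\dots,q_{k-1}):=H(q_1,\dots,q_{k-1},0)$ is $C^1$ in a relative neighborhood of $(p_1,\dots,p_{k-1})$, so that the subgradient of $\widetilde H$ there is unique. But the domain of $\widetilde H$ has the same nested boundary structure as $D$ in \eqref{D} one dimension down: $\widetilde H=+\infty$ as soon as $q_{k-1}>0$ (and so on down the hierarchy). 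The tuple $(p_1,\dots,p_{k-1})$ attached to a point of $\mathcal{S}_1$ can perfectly well have $p_{k-1}=0$ (and further trailing zeros) --- this is not a pathological case: it occurs for the boundary piece $v_2=2v_1^2$ in Section \ref{section 5.3}, where $p_2=0$, and the paper's localization analysis (the index $m$ in Theorem \ref{theorem 1.4}) is set up precisely to allow the largest nonzero index to be $<k-1$. At such a point $(p_1,\dots,p_{k-1})$ lies on the boundary of $\operatorname{dom}(\widetilde H)$, no relative neighborhood inside $\{p_k=0\}$ is contained in the domain, $\widetilde H$ is not differentiable there, and $\partial\widetilde H$ in the offending coordinates is a half-line; the membership $(v_1,\dots,v_{k-1})\in\partial\widetilde H(p_1,\dots,p_{k-1})$ then only yields one-sided inequalities (e.g.\ $v_{k-1}\geq$ the tilted mean of $\phi_{k-1}$) and cannot identify $v_i$ with the tilted means. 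The same issue undercuts the unproved ``$\supseteq$'' half of your identity $\pi_2(\partial H(p_1,\dots,p_{k-1},0))=[g'(0^-),\infty)$, which you need to know that the infimum defining $g_2$ in \eqref{def1} is actually $g'(0^-)$.

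The paper's proof avoids exactly this trap: instead of perturbing inside the face, it tests the subgradient inequality at $(p_1,\dots,p_i+\epsilon c,\dots,p_{k-1},-\epsilon)$, which lies in $D$ for \emph{every} sign of $c$ because the last coordinate is strictly negative; a dominated convergence argument (using that $\phi_k$ eventually dominates $c\phi_i$, plus the Fatou step \eqref{296} giving finiteness of the tilted means) identifies the one-sided directional derivative as $cw_i-w_k$, so $cw_i-w_k\geq cv_i-v_k$ for all $c\in\R$ and hence $w_i=v_i$; convexity then gives $(w_1,\dots,w_k)\in\partial H(p_1,\dots,p_{k-1},0)$, which settles \eqref{292}. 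Your monotone-convergence identification of $g'(0^-)$ is fine, but to repair the proposal you need to replace the in-face $C^1$ claim by some version of this coupled perturbation (or another argument valid at boundary points of $\operatorname{dom}(\widetilde H)$); as written, \eqref{291} is only established when all of $p_{k-1},\dots$ happen to place $(p_1,\dots,p_{k-1})$ in the interior of $\operatorname{dom}(\widetilde H)$.
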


\begin{proof}
Let us consider the first case $(v_1,\cdots,v_{k-1})\in \mathcal{S}_1$. By the definition of the set $\mathcal{S}_1$, there exist $v_k$ and (unique) $p_1,\cdots,p_{k-1}$ satisfying
\begin{align} \label{295}
(v_1,\cdots,v_{k-1},v_k)\in \partial H(p_1,\cdots,p_{k-1},0).
\end{align}
This implies that for any $\epsilon>0$,
\begin{align*}
H(p_1,\cdots,p_{k-1},-\epsilon) - H(p_1,\cdots,p_{k-1},0) \geq -\epsilon v_k.
\end{align*}
Dividing this by $-\epsilon$ and then sending $\epsilon \rightarrow 0^+$, using Fatou's lemma, we obtain 
\begin{align} \label{296}
\frac{1}{Z}\int  \phi_k e^{p_1\phi_1+\cdots+p_{k-1}\phi_{k-1}}d\lambda \leq v_k.
\end{align}
Here, $Z$ is a normalizing constant $Z=\int  e^{p_1\phi_1+\cdots+p_{k-1}\phi_{k-1}}d\lambda$. This obviously implies that for any $1\leq i\leq k$,
\begin{align} \label{297}
w_i:=\frac{1}{Z}\int  \phi_i e^{p_1\phi_1+\cdots+p_{k-1}\phi_{k-1}}d\lambda < \infty.
\end{align}
Using \eqref{295} again, for each $1\leq i\leq k-1$ and for any $\epsilon>0$, $c\in \R$, we have
\begin{align*}
H(p_1,\cdots,p_i+\epsilon c,\cdots,p_{k-1},-\epsilon)-H(p_1,\cdots,p_i,\cdots,p_{k-1},0) \geq \epsilon c v_i - \epsilon v_k.
\end{align*}
This implies that
\begin{align} \label{ldct}
\lim_{\epsilon \rightarrow 0^+} \frac{1}{\epsilon}\Big[H(p_1,\cdots,p_i+\epsilon c,\cdots,p_{k-1},-\epsilon)-H(p_1,\cdots,p_i,\cdots,p_{k-1},0) \Big] \geq cv_i-v_k.
\end{align}
Using  dominated convergence theorem, let us check that left hand side of  \eqref{ldct} is equal to $cw_i-w_k$.
Indeed, if we denote $A,A_\epsilon$ ($\epsilon>0$) by
\begin{align*}
A:= e^{p_1\phi_1+\cdots+p_i\phi_i+\cdots+p_{k-1}\phi_{k-1}}, \ A_\epsilon:=  e^{p_1\phi_1+\cdots+(p_i+\epsilon c)\phi_i+\cdots +p_{k-1}\phi_{k-1} - \epsilon \phi_k},
\end{align*} then the left hand side of  \eqref{ldct} can be written as
\begin{align} \label{ldct3}
\lim_{\epsilon \rightarrow 0^+} \Big[\frac{\log \int A_\epsilon d\lambda- \log \int A d\lambda}{\int A_\epsilon d\lambda-\int A d\lambda}\cdot \frac{\int A_\epsilon d\lambda-\int A d\lambda}{\epsilon}\Big].
\end{align}
Note that
\begin{align*}
\lim_{\epsilon \rightarrow 0^+}\frac{ A_\epsilon - A }{\epsilon} =  \lim_{\epsilon \rightarrow 0^+} A\cdot \frac{e^{\epsilon(c\phi_i-\phi_k)}-1}{\epsilon} = c \phi_i A - \phi_k A.
\end{align*}
If we choose $M>0$ such that $x\geq M \Rightarrow c\phi_i(x)<\phi_k(x)$, then for $x\geq M$ and $\epsilon>0$,
\begin{align*}
\Big \vert A\cdot \frac{e^{\epsilon(c\phi_i-\phi_k)}-1}{\epsilon}\Big \vert \leq A (\phi_k-c\phi_i).
\end{align*}
Also, if we denote $N:= \sup_{0<x\leq M} |c\phi_i-\phi_k| <\infty$, then  for $x\in (0,M)$ and $0<\epsilon<1$,
\begin{align*}
\Big \vert A\cdot \frac{e^{\epsilon(c\phi_i-\phi_k)}-1}{\epsilon}\Big \vert\leq A (e^N-1).
\end{align*}
 Note that 
 $ A (\phi_k-c\phi_i) \in L^1(d\lambda)$ due to \eqref{297}, and $A\in L^1(d\lambda)$ since $(p_1,\cdots,p_{k-1},0)\in \text{dom}(H)$. Therefore, applying the dominated convergence theorem,
\begin{align} \label{ldct1}
\lim_{\epsilon \rightarrow 0^+}  \int \frac{ A_\epsilon - A }{\epsilon} d\lambda = \int (c\phi_i-\phi_k) e^{p_1\phi_1+\cdots+p_{k-1}\phi_{k-1}}d\lambda.
\end{align}
Also, since $\sup_{x\in (0,\infty)} (c\phi_i-\phi_k)<\infty$ and $A\in L^1(d\lambda)$, as an application of the dominated convergence theorem, one can deduce that $\lim_{\epsilon \rightarrow 0^+} \int A_\epsilon d\lambda = \int A d\lambda$. Thus,
\begin{align} \label{ldct2}
\lim_{\epsilon \rightarrow 0^+} \frac{\log \int A_\epsilon d\lambda- \log \int A d\lambda}{\int A_\epsilon d\lambda-\int A d\lambda} = \Big[\int e^{p_1\phi_1+\cdots+p_{k-1}\phi_{k-1}}d\lambda\Big]^{-1}.
\end{align}
Using \eqref{ldct3}, \eqref{ldct1} and \eqref{ldct2}, one can deduce that the left hand side of  \eqref{ldct} is equal to $cw_i-w_k$, and thus
 we have $cw_i-w_k \geq cv_i-v_k$. Since $c$ is arbitrary, we obtain  $w_i=v_i$, which implies \eqref{291}.
Also, using the convexity of $H$, it is easy to check that $(w_1,\cdots,w_{k-1},w_k)\in \partial H(p_1,\cdots,p_{k-1},0)$. Since any $v_k$ for which \eqref{295} holds satisfies \eqref{296},   we obtain \eqref{292}.

Finally, let us consider the case when $(v_1,\cdots,v_{k-1})\in \mathcal{S}_2$  and $(v_1,\cdots,v_{k-1},v_k)\in \mathcal{A}$.  Since $(v_1,\cdots,v_k)\in \mathcal{A}\subset \text{int}(\text{dom}(I))$ and $I$ is essentially smooth, $I$ is differentiable at $(v_1,\cdots,v_k)$. If we choose $(p_1,\cdots,p_k)\in \partial I(v_1,\cdots,v_k)$, then by the Legendre duality, we have $(v_1,\cdots,v_k)\in \partial H(p_1,\cdots,p_k)$. Since $(v_1,\cdots,v_{k-1})\in \mathcal{S}_2$, $p_k\neq  0$. This in turn implies that $p_k<0$ since $(p_1,\cdots,p_k)\in \text{dom}(\partial H) \subset \text{dom} (H) = D$. Thus, $H$ is differentiable at $(p_1,\cdots,p_k)$, and we immediately obtain \eqref{294}.
\end{proof}
Using Lemma \ref{prop 2.9}, one can characterize a (unique) maximizer of the differential entropy $h(\cdot)$ over the set \eqref{11}:

\begin{proposition} \label{theorem 2.10}
Assume that $\lambda^*$ is a unique maximizer of $h(\cdot)$ over the set \eqref{11}. In the case of $(a_1,\cdots,a_{k-1})\in \mathcal{S}_1$ and $a_k \geq g_2(a_1,\cdots,a_{k-1})$, 
\begin{align} \label{21000}
\lambda^*=\frac{1}{Z}e^{p_1\phi_1+\cdots+p_{k-1}\phi_{k-1}} dx
\end{align}
for $p_1,\cdots,p_{k-1}$ satisfying $\int \phi_i d\lambda^* = a_i$ for $1\leq i\leq k-1$.

On the other hand, either in the case of \\
(i) $(a_1,\cdots,a_{k-1})\in \mathcal{S}_2$ or \\
(ii) $(a_1,\cdots,a_{k-1})\in \mathcal{S}_1$ and $a_k < g_2(a_1,\cdots,a_{k-1})$,\\
\begin{align} \label{21001}
\lambda^* = \frac{1}{Z}e^{p_1\phi_1+\cdots+p_k\phi_k} dx
\end{align}
for $p_1,\cdots,p_k$ satisfying  $p_k<0$ and $\int \phi_i d\lambda^* = a_i$ for $1\leq i\leq k$. In all cases, $Z$ denotes the normalizing constant.
\end{proposition}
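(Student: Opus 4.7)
The plan is to characterize $\lambda^*$ by constructing an explicit Gibbs candidate in each regime and verifying its optimality via the standard Gibbs variational argument. Existence and uniqueness of $\lambda^*$ as the maximizer of $h$ (equivalently, the minimizer of $H(\cdot\mid\lambda)$) over the feasible set \eqref{11} are already in hand thanks to Lemma \ref{lemma 2.4} and the strict convexity, lower semicontinuity, and compact sublevel sets of $H(\cdot\mid\lambda)$. It thus suffices to exhibit, in each regime, a feasible Gibbs measure of the claimed form that attains the maximum.

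For Case~1, with $(a_1,\ldots,a_{k-1})\in\mathcal{S}_1$ and $a_k\ge g_2(a_1,\ldots,a_{k-1})$, I select the unique multipliers $(p_1,\ldots,p_{k-1})$ satisfying \eqref{a1} (cf.\ Remark \ref{remark 2.9}) and define $\nu=\frac{1}{Z}e^{p_1\phi_1+\cdots+p_{k-1}\phi_{k-1}}dx$. By formulas \eqref{291}--\eqref{292} of Lemma \ref{prop 2.9}, $\int \phi_i\,d\nu=a_i$ for $1\le i\le k-1$ and $\int \phi_k\,d\nu=g_2(a_1,\ldots,a_{k-1})\le a_k$, so $\nu$ is feasible. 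For any feasible $\mu$ (where we may assume $\mu\ll dx$, since otherwise $-h(\mu)=+\infty$), the identity
\[
H(\mu\mid\nu)=-h(\mu)-\sum_{i=1}^{k-1}p_i\!\int\phi_i\,d\mu+\log Z\ \ge\ 0
\]
together with $\int\phi_i\,d\mu=a_i$ yields $-h(\mu)\ge\sum_{i=1}^{k-1}p_ia_i-\log Z=-h(\nu)$, with equality at $\mu=\nu$. Uniqueness then forces $\nu=\lambda^*$, giving \eqref{21000}.

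For Case~2, I must produce $(p_1,\ldots,p_k)$ with $p_k<0$ such that $\mu_\star=\frac{1}{Z}e^{p_1\phi_1+\cdots+p_k\phi_k}dx$ satisfies $\int\phi_i\,d\mu_\star=a_i$ for all $1\le i\le k$. When $(a_1,\ldots,a_{k-1})\in\mathcal{S}_2$ this is precisely \eqref{294} of Lemma \ref{prop 2.9}. When $(a_1,\ldots,a_{k-1})\in\mathcal{S}_1$ and $a_k<g_2(a_1,\ldots,a_{k-1})$, I invoke the differentiability of $I$ on $\mathcal{A}$ from Remark \ref{remark 2.9}(ii); Legendre duality gives $(a_1,\ldots,a_k)\in\partial H(p_1,\ldots,p_k)$ for $(p_1,\ldots,p_k)=\nabla I(a_1,\ldots,a_k)$. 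The possibility $p_k=0$ is excluded because by Remark \ref{remark 2.9}(i) it would force $a_k\ge g_2(a_1,\ldots,a_{k-1})$, contradicting our hypothesis; since $(p_1,\ldots,p_k)\in\mathrm{dom}(H)=D$ from \eqref{D}, we conclude $p_k<0$. Feasibility of $\mu_\star$ is then immediate, and the Gibbs variational identity
\[
H(\mu\mid\mu_\star)=-h(\mu)-\sum_{i=1}^{k}p_i\!\int\phi_i\,d\mu+\log Z\ \ge\ 0,
\]
combined with $p_k<0$ and $\int\phi_k\,d\mu\le a_k$ (which gives $p_k\int\phi_k\,d\mu\ge p_ka_k$), yields $-h(\mu)\ge\sum_{i=1}^{k}p_ia_i-\log Z=-h(\mu_\star)$ for every feasible $\mu$. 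Hence $\mu_\star=\lambda^*$, giving \eqref{21001}.

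The main obstacle is the subcase $(a_1,\ldots,a_{k-1})\in\mathcal{S}_1$ with $a_k<g_2(a_1,\ldots,a_{k-1})$, since Lemma \ref{prop 2.9} does not directly hand us the required Gibbs parameters. The essential point is to combine essential smoothness of $H$ (through differentiability of $I$ on $\mathcal{A}$) with the monotonicity property of $\partial H(\cdot,0)$ from Remark \ref{remark 2.9}(i), which together rule out $p_k=0$ and, via the domain description \eqref{D}, force $p_k<0$.
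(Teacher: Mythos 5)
Your proposal is correct and follows essentially the same route as the paper: existence of the Gibbs candidate via Lemma \ref{prop 2.9} (resp.\ Legendre duality between $I$ and $H$ together with the definition of $g_2$ to force $p_k<0$ in the subcase $(a_1,\cdots,a_{k-1})\in\mathcal{S}_1$, $a_k<g_2(a_1,\cdots,a_{k-1})$), followed by the relative-entropy (Gibbs variational) inequality to identify the unique maximizer. Your explicit justification that $p_k=0$ would place $a_k$ in $\pi_2(\partial H(p_1,\cdots,p_{k-1},0))$ and hence above $g_2(a_1,\cdots,a_{k-1})$ merely spells out a step the paper leaves implicit.
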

\begin{proof}
Let us first consider the case $(a_1,\cdots,a_{k-1})\in \mathcal{S}_1$ and $a_k \geq g_2(a_1,\cdots,a_{k-1})$. According to  Lemma \ref{prop 2.9}, there exists a probability measure $\nu$ of the form \eqref{21000} satisfying $\int \phi_i d\nu = a_i$ for $1\leq i\leq k-1$ and $\int \phi_k d\nu =  g_2(a_1,\cdots,a_{k-1}) \leq a_k$ (recall that $d\lambda$ is given by \eqref{basic}).   It is easy to check that $\nu$ is the maximizer of $h(\cdot)$ over the set \eqref{11}. In fact, for any probability measure $\mu \ll dx$,
\begin{align*}
-h(\mu) &= H(\mu | \nu) + p_1\int \phi_1 d\mu + \cdots + p_{k-1} \int \phi_{k-1}d\mu + C \\ 
&\geq p_1a_1+\cdots+p_{k-1}a_{k-1} + C,
\end{align*}
and the equality is attained if and only if $\mu = \nu$.

Let us now consider the other cases, (i) and (ii). In each case, we first show the existence of a probability measure $\nu$ of the form \eqref{21001} satisfying $\int \phi_i d\nu= a_i$ for $1\leq i\leq k$. In the case of (i), it is already proved in Lemma \ref{prop 2.9}, so we consider the case (ii). For $(p_1,\cdots,p_k)\in \partial I(a_1,\cdots,a_k)$,  we have $(a_1,\cdots,a_k)\in \partial H(p_1,\cdots,p_k)$  by the Legendre duality. Since $a_k<g_2(a_1,\cdots,a_{k-1})$, we have $p_k \neq 0$, which in turn implies $p_k<0$. This implies that $H$ is differentiable at $(p_1,\cdots,p_k)$, and for $1\leq i\leq k$,
\begin{align*}
a_i = \frac{1}{Z}\int \phi_i e^{p_1\phi_1+\cdots+p_k\phi_k} d\lambda.
\end{align*}
Now, as before, one can check that $\nu$ is the maximizer of $h(\cdot)$ over the set \eqref{11}. In fact, since $p_k<0$, for any probability measure $\mu \ll dx$,
\begin{align*}
-h(\mu) &= H(\mu | \nu) + p_1\int \phi_1 d\mu + \cdots + p_{k} \int \phi_{k}d\mu + C \\ 
&\geq p_1a_1+\cdots+p_{k}a_{k} + C,
\end{align*}
and the equality is attained if and only if $\mu = \nu$.
\end{proof}

\begin{proof}[Proof of Theorem \ref{theorem 1.0}]
 Theorem \ref{theorem 1.1} and Proposition \ref{theorem 2.10} immediately conclude the proof.
\end{proof}

\subsection{Structure of the rate function $I$} \label{section 3.5}
In this section, we establish  useful  properties of the rate function $I$. Recall that $I$ is the weak LDP rate function for the sequence $(S^1_n,\cdots,S^k_n)$ under the reference measure $\mathbb{P}$ (see Proposition \ref{prop 2.4}). It turns out that $I(v_1,\cdots,v_k)$ behaves differently when $(v_1,\cdots,v_{k-1})\in \mathcal{S}_1$ and $(v_1,\cdots,v_{k-1})\in \mathcal{S}_2$.

\begin{proposition} \label{prop 2.7}
For each $(v_1,\cdots,v_{k-1})\in (\R^+)^{k-1}$, the rate function $I(v_1,\cdots,v_{k-1},\cdot)$ is non-increasing. In the case of $(v_1,\cdots,v_{k-1})\in \mathcal{S}_1$,
\begin{align} \label{2100}
 I(v_1,\cdots,v_{k-1},z) > I(v_1,\cdots,v_{k-1},w)
\end{align}
for all $z,w\in \R^+$ satisfying $z<w\leq g_2(v_1,\cdots,v_{k-1})$ and $ (v_1,\cdots,v_{k-1},w)\in \mathcal{A}$. Also, $I(v_1,\cdots,v_{k-1},\cdot)$ is constant on the interval $[g_2(v_1,\cdots,v_{k-1}),\infty)$.

On the other hand, in the case of $(v_1,\cdots,v_{k-1})\in \mathcal{S}_2$, 
\begin{align} \label{2101}
I(v_1,\cdots,v_{k-1},z) > I(v_1,\cdots,v_{k-1},w)
\end{align}  
for all  $z,w\in \R^+$ satisfying $ z<w$ and $ (v_1,\cdots,v_{k-1},w)\in \mathcal{A}$.
\end{proposition}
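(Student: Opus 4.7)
The plan is to combine the variational formula from Proposition~\ref{prop 2.4}, which expresses $I$ as an infimum of $H(\cdot|\lambda)$ subject to the inequality constraint $\int\phi_k\,d\mu\le v_k$, with the explicit description of the unique minimizer provided by Proposition~\ref{theorem 2.10} and Lemma~\ref{prop 2.9}. Monotonicity of $v_k\mapsto I(v_1,\ldots,v_{k-1},v_k)$ is immediate, since enlarging $v_k$ only enlarges the feasible set, so the infimum can only shrink.

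For constancy on $[g_2(v_1,\ldots,v_{k-1}),\infty)$ in the $\mathcal{S}_1$ case, I would introduce the measure $\nu:=\frac{1}{Z}\exp(p_1\phi_1+\cdots+p_{k-1}\phi_{k-1})\,dx$ associated with the unique $(p_1,\ldots,p_{k-1})$ from Definition~\ref{def}. By Lemma~\ref{prop 2.9}, $\int\phi_i\,d\nu=v_i$ for $i\le k-1$ and $\int\phi_k\,d\nu=g_2(v_1,\ldots,v_{k-1})$, so $\nu$ is feasible for every $v_k\ge g_2$, and by the first case of Proposition~\ref{theorem 2.10} it is the unique minimizer. Hence $I(v_1,\ldots,v_{k-1},v_k)=H(\nu|\lambda)$, independently of $v_k$ on $[g_2,\infty)$.

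For the strict decrease statements \eqref{2100} and \eqref{2101}, I would argue by contradiction. Suppose $I(v_1,\ldots,v_{k-1},z)=I(v_1,\ldots,v_{k-1},w)$ with $z<w$ in the asserted range. If $I(z)=\infty$, the strict inequality holds automatically since $I(w)<\infty$ by Remark~\ref{remark 2.5}. Otherwise, the compactness and closedness established in Lemma~\ref{lemma 2.4} together with lower semicontinuity of $H(\cdot|\lambda)$ yield a minimizer $\mu^{*}$ for $v_k=z$. Then $\mu^{*}$ is also feasible for $v_k=w$ and attains the common value, so it is a minimizer there too. Strict convexity of $H(\cdot|\lambda)$ forces $\mu^{*}=\lambda^{*}_w$, the unique minimizer identified by Proposition~\ref{theorem 2.10}.

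The closing step, which is the main technical point rather than a genuine obstacle, is to verify that $\lambda^{*}_w$ saturates the $\phi_k$-constraint, i.e.\ $\int\phi_k\,d\lambda^{*}_w=w$; this would then contradict $\int\phi_k\,d\mu^{*}\le z<w$. In the $\mathcal{S}_2$ case, saturation is \eqref{294} of Lemma~\ref{prop 2.9} (the parameter $p_k<0$ is active). In the $\mathcal{S}_1$ case with $w<g_2$, case (ii) of Proposition~\ref{theorem 2.10} produces $p_k<0$ and the same saturation; at the boundary $w=g_2$, the first case of Proposition~\ref{theorem 2.10} together with Lemma~\ref{prop 2.9} gives $\int\phi_k\,d\lambda^{*}_w=g_2=w$. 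The delicate input is precisely this binding behavior of the optimizer---without it the inequality constraint in the infimum defining $I$ would leave room for $I$ to be locally constant in $v_k$, and both strict-decrease claims would fail.
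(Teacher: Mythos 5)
Your proposal is correct, but it runs along a different track from the paper's proof, which argues entirely on the dual side: there, non-increasingness comes from the Legendre-transform formula $I(v_1,\ldots,v_k)=\sup_{(p_1,\ldots,p_k)\in D}(p_1v_1+\cdots+p_kv_k-H(p_1,\ldots,p_k))$ with $p_k\le 0$ on $D$; the strict-decrease claims \eqref{2100} and \eqref{2101} are obtained by noting that constancy of $I(v_1,\ldots,v_{k-1},\cdot)$ on an interval would force a subgradient of $I$ with vanishing last component, hence by Legendre duality a point $(v_1,\ldots,v_{k-1},y)\in\partial H(p_1,\ldots,p_{k-1},0)$ with $y<g_2(v_1,\ldots,v_{k-1})$ (contradicting \eqref{def1}) or with $(v_1,\ldots,v_{k-1})\in\mathcal{S}_2$ (contradicting the definition of $\mathcal{S}_2$); and constancy on $[g_2,\infty)$ is a Fenchel-equality computation using fact (i) of Remark \ref{remark 2.9}. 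You instead work on the primal side, via the entropy-minimization representation \eqref{240} of Proposition \ref{prop 2.4}: monotonicity from monotone feasible sets, constancy on $[g_2,\infty)$ because the single measure $\nu$ of Lemma \ref{prop 2.9} (with $\int\phi_k\,d\nu=g_2$ by \eqref{292}) is feasible and, by the first case of Proposition \ref{theorem 2.10}, optimal for every level $v_k\ge g_2$, and strict decrease by transporting the minimizer at level $z$ (which exists by Lemma \ref{lemma 2.4} and lower semicontinuity once $I(z)<\infty$; the case $I(z)=\infty$ is handled by Remark \ref{remark 2.5}) to level $w$ and contradicting the saturation $\int\phi_k\,d\lambda^*_w=w$ furnished by \eqref{294} and case (ii) (or the boundary case $w=g_2$) of Proposition \ref{theorem 2.10}. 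This is legitimate and non-circular, since Lemma \ref{prop 2.9} and Proposition \ref{theorem 2.10} are proved before Proposition \ref{prop 2.7} and do not use it; the only presentational caveat is that Proposition \ref{theorem 2.10} is stated for the fixed admissible point $(a_1,\ldots,a_k)$, so you should note explicitly that its proof applies verbatim at any admissible $(v_1,\ldots,v_{k-1},w)$, which is exactly where the hypothesis $(v_1,\ldots,v_{k-1},w)\in\mathcal{A}$ in \eqref{2100}--\eqref{2101} is used (it guarantees the subgradient of $I$ needed in that proof). What each route buys: yours recycles the equivalence-of-ensembles characterization and avoids subdifferential calculus, making the "binding constraint" mechanism behind the phase transition explicit; the paper's dual argument is self-contained at the level of $I$ and $H$, exploits the definitions of $g_2$ and $\mathcal{S}_2$ directly, and proves the constancy on $[g_2,\infty)$ without invoking the minimizer description at all.
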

\begin{proof} Proof consists of three steps.

Step 1. Non-increasing property on $(0,\infty)$: recall the variational formula:
\begin{align*}
I(v_1,\cdots,v_k) = \sup_{(p_1,\cdots,p_k)\in D} (p_1v_1+\cdots+p_kv_k-H(p_1,\cdots,p_k)),
\end{align*}
with the domain $D$ defined in \eqref{D}.
For each $(p_1,\cdots,p_k)\in D$, whenever $z<w$, 
\begin{align*}
p_1v_1+\cdots+p_{k-1}v_{k-1}&+p_kz - H(p_1,\cdots,p_k)\\
&\geq p_1v_1+\cdots+p_{k-1}v_{k-1}+p_kw  - H(p_1,\cdots,p_k).
\end{align*}
Thus,  $I(v_1,\cdots,v_{k-1},z) \geq I(v_1,\cdots,v_{k-1},w)$ when $z<w$. 

Step 2. Case $(v_1,\cdots,v_{k-1})\in \mathcal{S}_1$: if $z<g_1(v_1,\cdots,v_{k-1})$, then \eqref{2100} is obvious since $I(v_1,\cdots,v_{k-1},z)=\infty$ (see Remark \ref{remark 2.5}). Now, assume that for some $g_1(v_1,\cdots,v_{k-1})\leq z<w\leq g_2(v_1,\cdots,v_{k-1})$,
\begin{align*}
I(v_1,\cdots,v_{k-1},z)=I(v_1,\cdots,v_{k-1},w)<\infty.
\end{align*}
Since $I(v_1,\cdots,v_{k-1},\cdot)$ is non-increasing, $I(v_1,\cdots,v_{k-1},\cdot)$ is constant on the interval $[z,w]$. Thus, for any $y\in (z,w)$, the subgradient $(p_1,\cdots,p_k)$ of $I$ at $(v_1,\cdots,v_{k-1},y)$ should satisfy $p_k=0$. Since  $H$ and $I$ are conjugate to each other,
\begin{align*}
(v_1,\cdots,v_{k-1},y) \in \partial H(p_1,\cdots,p_{k-1},0).
\end{align*}
This contradicts the definition of $g_2(v_1,\cdots,v_{k-1})$ since $y<g_2(v_1,\cdots,v_{k-1})$. Thus, \eqref{2100} holds for $z,w$ satisfying $z<w\leq g_2(v_1,\cdots,v_{k-1})$ and $ (v_1,\cdots,v_{k-1},w)\in \mathcal{A}$.

Now, let us prove that $I(v_1,\cdots,v_{k-1},\cdot)$ is constant on the interval $[g_2(v_1,\cdots,v_{k-1}),\infty)$. Due to the definition of $g_2$ and the fact (i) in  Remark \ref{remark 2.9}, for arbitrary $\epsilon>0$, we have $(v_1,\cdots,v_{k-1},g_2(v_1,\cdots,v_{k-1})+\epsilon)\in \partial H(p_1,\cdots,p_{k-1},0)$. This implies that
\begin{multline} \label{21011}
I(v_1,\cdots,v_{k-1},g_2(v_1,\cdots,v_{k-1})+\epsilon) +H(p_1,\cdots,p_{k-1},0) \\
= (v_1,\cdots,v_{k-1},g_2(v_1,\cdots,v_{k-1})+\epsilon) \cdot  (p_1,\cdots,p_{k-1},0).
\end{multline}
Therefore, for any $x>0$, using \eqref{21011},
\begin{align*}
I(v_1,\cdots,&v_{k-1},g_2(v_1,\cdots,v_{k-1})+x) \\
&\geq  (v_1,\cdots,v_{k-1},g_2(v_1,\cdots,v_{k-1})+x)\cdot  (p_1,\cdots,p_{k-1},0) -H(p_1,\cdots,p_{k-1},0)  \\
&= (v_1,\cdots,v_{k-1},g_2(v_1,\cdots,v_{k-1})+\epsilon)\cdot  (p_1,\cdots,p_{k-1},0) -H(p_1,\cdots,p_{k-1},0)  \\
&= I(v_1,\cdots,v_{k-1},g_2(v_1,\cdots,v_{k-1})+\epsilon).
\end{align*}
 Since $x>0$ is arbitrary and $I(v_1,\cdots,v_{k-1}, \cdot)$ is non-increasing, it follows from the above inequality that  $I(v_1,\cdots,v_{k-1}, \cdot)$ is constant on the interval $[g_2(v_1,\cdots,v_{k-1})+\epsilon,\infty)$. Since $\epsilon>0$ is arbitrary and $I$ is lower semicontinuous, $I(v_1,\cdots,v_{k-1}, \cdot)$ is constant  on the interval $[g_2(v_1,\cdots,v_{k-1}),\infty)$.

Step 3. Case $(v_1,\cdots,v_{k-1})\in \mathcal{S}_2$: if $z<g_1(v_1,\cdots,v_{k-1})$, then \eqref{2101} is obvious since $I(v_1,\cdots,v_{k-1},z)=\infty$. Let us assume that for some $g_1(v_1,\cdots,v_{k-1})\leq z<w$,
\begin{align*}
I(v_1,\cdots,v_{k-1},z)=I(v_1,\cdots,v_{k-1},w)<\infty.
\end{align*}
Then, for any $y\in (z,w)$, the subgradient $(p_1,\cdots,p_k)$ of $I$ at $(v_1,\cdots,v_{k-1},y)$ should satisfy $p_k=0$. Thus, by the Legendre duality, we have
\begin{align*}
(v_1,\cdots,v_{k-1},y) \in \partial H(p_1,\cdots,p_{k-1},0),
\end{align*}
and this contradicts the definition of $\mathcal{S}_2$. Since we already proved the non-increasing property of $I(v_1,\cdots,v_{k-1},\cdot)$, proof  is concluded.
\end{proof}

Proposition \ref{prop 2.7} will play a crucial role in analyzing the localization and delocalization phenomena of the microcanonical ensembles in  Section \ref{section 4}.

\section{Localization and delocalization of microcanonical ensembles} \label{section 4}
 When the microcanonical ensemble is given by a single constraint, localization phenomenon does not happen in general (see Section \ref{section 5.1} and Proposition \ref{prop 5.2} for details).  However, when the microcanonical ensemble is given by multiple constraints, complicated localization  behaviors can happen, as  explained in Section \ref{section 2}. 
In this section, we systematically study the  localization and delocalization phenomena of such ensembles using the theory of  large deviations.

\subsection{Large deviations for the joint law of empirical distributions and the maximum component} \label{section 4.1}
Let us define the maximum component $M_n$ by
\begin{align*}
M_n:=\frac{\max_{1\leq i\leq n} \phi_k(X_i)}{n}.
\end{align*}
The key ingredient that reveals the localization behavior is the large deviation result for the maximum component $M_n$. In order to capture  the finer behavior of the microcanonical ensembles, we obtain a large deviation result for the sequence of the joint law $(L_n,M_n)$:

\begin{theorem} \label{theorem 3.1} 
For any Borel set $A$ in $ \mathcal{M}_1(\R^+) \times \R^+$, 
\begin{multline*}
 -\inf_{(\mu,z)\in A^{\mathrm{o}}} J^{max}_1(\mu, z)\leq  \liminf_{\delta \rightarrow 0} \liminf_{n\rightarrow \infty} \frac{1}{n}\log \mathbb{P}((L_n, M_n)\in A^{\mathrm{o}}|C_n^\delta) \\ \leq  \limsup_{\delta \rightarrow 0} \limsup_{n\rightarrow \infty} \frac{1}{n}\log \mathbb{P}((L_n, M_n)\in \bar{A}|C_n^\delta) \leq -\inf_{(\mu,z)\in \bar{A}} J^{max}_1(\mu, z),
\end{multline*}
with the rate function $J^{max}_1$ given by
\begin{align*}
J^{max}_1(\mu, z):=J(\mu, a_1,\cdots,a_{k-1},a_k-z)-I(a_1,\cdots,a_k).
\end{align*}
\end{theorem}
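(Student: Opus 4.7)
The plan is to reduce the theorem to the joint weak LDP of Theorem \ref{theorem 2.2} by isolating the maximum coordinate. Split
\begin{equation*}
\mathbb{P}\bigl((L_n, M_n) \in A \mid C_n^\delta\bigr) = \frac{\mathbb{P}\bigl((L_n, M_n) \in A, C_n^\delta\bigr)}{\mathbb{P}(C_n^\delta)}.
\end{equation*}
By Proposition \ref{prop 2.4}, the denominator has rate $I(a_1, \ldots, a_k)$ after $n \to \infty$ and then $\delta \to 0$, so it remains to show the numerator has rate $\inf_{(\mu, z) \in A} J(\mu, a_1, \ldots, a_{k-1}, a_k - z)$; taking differences yields $J^{max}_1$.

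The heuristic driving the numerator estimate is the following: if $M_n = z > 0$ then a single coordinate $X_{i^*}$ has $\phi_k(X_{i^*}) = nz$, which contributes $z$ to $S^k_n$ but, by Assumption \ref{assume0}(C3), only $\phi_j(X_{i^*}) = O(\phi_k(X_{i^*})^{1/\kappa}) = o(n)$ to $S^j_n$ for $j < k$, and negligibly to $L_n$ in the weak topology. Hence the remaining $n - 1$ coordinates must satisfy $\int \phi_j\, d\mu = a_j$ for $j < k$ and $\int \phi_k\, d\mu \leq a_k - z$, whose rate is precisely $J(\mu, a_1, \ldots, a_{k-1}, a_k - z)$ by Theorem \ref{theorem 2.2}. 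Rigorously, by permutation symmetry and the $\lambda^{\otimes n}$-almost-sure absence of ties in $\phi_k(X_i)$,
\begin{equation*}
\mathbb{P}\bigl((L_n, M_n) \in A, C_n^\delta\bigr) = n \int_0^\infty \mathbb{P}\bigl((L_n, M_n) \in A, C_n^\delta, \phi_k(X_i) \leq \phi_k(y) \text{ for } i \geq 2 \bigm| X_1 = y\bigr)\, d\lambda(y).
\end{equation*}
Setting $z := \phi_k(y)/n$ and letting $L'_{n-1}, S^{\prime j}_{n-1}$ denote the empirical quantities of $X_2, \ldots, X_n$, the identities $L_n = \tfrac{1}{n}\delta_y + \tfrac{n-1}{n}L'_{n-1}$ and $S^j_n = \tfrac{1}{n}\phi_j(y) + \tfrac{n-1}{n}S^{\prime j}_{n-1}$, combined with $\phi_j(y)/n = o(1)$ for $j < k$, translate the event $\{(L_n, M_n) \in A\} \cap C_n^\delta$, up to $o(1)$ errors that vanish after $n \to \infty$ then $\delta \to 0$, into a condition on $(L'_{n-1}, S^{\prime 1}_{n-1}, \ldots, S^{\prime k}_{n-1}, z)$ concentrating near $(\mu, a_1, \ldots, a_{k-1}, a_k - z, z)$. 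For the upper bound one discretizes $z$ over $[0, a_k + \delta]$ (valid since $M_n \leq S^k_n \leq a_k + \delta$ on $C_n^\delta$), applies the Theorem \ref{theorem 2.2} upper bound slice-by-slice, and integrates over $y$ using Assumption \ref{assume0}(C3)--(C4) together with $\phi_1 \leq \phi_k^{1/\kappa^{k-1}}$ to check $\int d\lambda(y) = e^{o(n)}$ in the relevant range. The lower bound proceeds by choosing $(\mu^*, z^*) \in A^{\mathrm{o}}$ near-optimal, fixing $y$ with $\phi_k(y)/n \approx z^*$, and applying the Theorem \ref{theorem 2.2} lower bound to a small open neighborhood of $(\mu^*, a_1, \ldots, a_{k-1}, a_k - z^*)$.

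The main obstacle is the upper bound for non-compact closed sets $\bar A$, because Theorem \ref{theorem 2.2} supplies only a weak LDP. This is handled by the observation that on $C_n^\delta$ the vector $(S^1_n, \ldots, S^k_n)$ is confined to a compact set and $M_n$ is bounded, so after the reduction the weak LDP suffices in the finite-dimensional variables; the remaining non-compactness in $\mathcal{M}_1(\R^+)$ is absorbed by exponential tightness of $L'_{n-1}$ under $\mathbb{P}$ coming from Sanov's theorem. A secondary technicality is the restriction $\phi_k(X_i) \leq \phi_k(y)$ for $i \geq 2$ introduced by the symmetrization: it only tightens probabilities for the upper bound and is therefore harmless, while for the lower bound one truncates an optimal $\mu^*$ to the support $\{\phi_k \leq n^{1/2} z^*\}$, which perturbs $J$ by $o(1)$ and makes the restriction automatic for large $n$.
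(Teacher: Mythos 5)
Your upper bound is essentially the paper's argument: symmetrize over the index of the maximum, absorb $\phi_i(X_1)/n$ for $i<k$ using (C3), apply the weak LDP of Theorem \ref{theorem 2.2} to the remaining coordinates on compact boxes, and divide by $\mathbb{P}(C_n^\delta)$, so that part is fine. The genuine gap is in your lower bound. You plant a single spike $\phi_k(y)\approx nz^*$ and put the other $n-1$ coordinates in a small neighborhood of $(\mu^*,a_1,\dots,a_{k-1},a_k-z^*)$, but this event is \emph{not contained} in $\{M_n\in(z^*-\epsilon,z^*+\epsilon)\}$, so the LDP lower bound for it does not transfer to the target event. The rate function only requires $\int\phi_k\,d\mu^*\leq a_k-z^*$, and when the inequality is strict the constraint $S^k_{n-1}\approx a_k-z^*$ together with $L_{n-1}\approx\mu^*$ (weak topology) is typically realized by additional atypically large coordinates among $X_2,\dots,X_n$ carrying the excess $(a_k-z^*)-\int\phi_k\,d\mu^*$, which may well exceed $z^*$; then $M_n>z^*+\epsilon$ and the target event fails. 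Your proposed fix, truncating $\mu^*$ to $\{\phi_k\leq n^{1/2}z^*\}$, does not address this: it constrains where the bulk $\mu^*$ sits, not the excess mass that the empirical-mean constraint forces into a few huge coordinates, so the restriction $\phi_k(X_i)\leq\phi_k(y)$ is not ``automatic.''

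The paper's proof shows what is actually needed. When $a_k-jz<\int\phi_k\,d\mu\leq a_k-(j-1)z$ it plants $j$ spikes, namely $j-1$ coordinates with $\phi_k/n\approx z$ and one with $\phi_k/n\approx w<z$ (event $E^1_{n,\delta}$), so that the excess is absorbed by spikes none of which exceeds $z$, each costing only $e^{o(n)}$ by \eqref{301} of Lemma \ref{lemma 3.1}; and it must then rule out a further large coordinate among the rest, which is done by the separate estimate \eqref{31000}: on the event that some additional coordinate has $\phi_k/n\geq z$, the remaining empirical mean of $\phi_k$ would have to fall strictly below $\int\phi_k\,d\mu$ while the empirical measure stays near $\mu$, an event of rate $-\infty$, so subtracting it does not spoil the lower bound (see \eqref{3100}). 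Without some version of this multi-spike construction plus the super-exponential exclusion of a second large coordinate, your lower bound only covers pairs $(\mu^*,z^*)$ with $a_k-z^*-\int\phi_k\,d\mu^*$ small, not the general case the theorem asserts.
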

\begin{proof}
Throughout this proof, we use the notations 
\begin{align*}
S^i_{n-j} := \frac{\phi_i(X_{j+1})+\cdots+\phi_i(X_n)}{n-j}, \ L_{n-j}:= \frac{1}{n-j}(\delta_{X_{j+1}}+\cdots+\delta_{X_n}).
\end{align*}
for any fixed index $j$ and $1\leq i\leq k$. Also, for $r>0$ and $\mu\in \mathcal{M}_1(\R^+)$, define $B(\mu,r)$ and $\bar{B}(\mu,r)$ by
\begin{align*}
B(\mu,r):= \{\nu\in \mathcal{M}_1(\R^+) | d(\nu,\mu) < r\}, \quad \bar{B}(\mu,r):= \{\nu\in \mathcal{M}_1(\R^+) | d(\nu,\mu) \leq r\}.
\end{align*}
Recall that $d$ is a metric defined in \eqref{metric} that induces the weak convergence of probability measures. 

Step 1. Upper bound large deviations: it is obvious that
\begin{align} \label{310}
\mathbb{P}((L_n, M_n) \in \bar{A} | C_n^\delta) \leq n \mathbb{P}\Big(\big(L_n,\frac{\phi_k(X_1)}{n}\big) \in \bar{A}| C_n^\delta\Big).
\end{align}
According to the LDP for the sequence $(S^1_n,\cdots,S^k_n)$, for each $\delta>0$, we have
\begin{align} \label{311}
\liminf_{n\rightarrow \infty} \frac{1}{n}\log \mathbb{P}(C_n^\delta) \geq -\inf_{v_i\in (a_i-\delta,a_i+\delta)} I(v_1,\cdots,v_k) \geq -I(a_1,\cdots,a_k).
\end{align}
Let us define $A^\delta$ by a collection of $(\mu,y)\in \mathcal{M}_1(\R^+) \times \R^+$ for which there exists $x\in \R^+$ satisfying $(\mu,x)\in \bar{A}$ and $|y-(a_k-x)|<\delta$.
Then, using the condition (C3) in Assumption \ref{assume0},  for sufficiently large $n$,
 \begin{align*}
 \Big\{\big(L_n,\frac{\phi_k(X_1)}{n}\big)\in \bar{A}\Big\} \cap C_n^\delta \Rightarrow  B^\delta_n:= \cap_{i=1}^{k-1} \{|S^i_{n-1} - a_i|<2\delta \} \cap \{(L_n,S^k_{n-1}) \in A^\delta\}.
 \end{align*}
  According to the LDP result Theorem \ref{theorem 2.2} and Remark \ref{remark 2.3},
\begin{align} \label{312}
&\limsup_{n\rightarrow \infty} \frac{1}{n}\log \mathbb{P}((L_n,S^1_{n-1}\cdots,S^k_{n-1})\in B_n^\delta)  \nonumber \\
&\leq -\inf_{(\mu,v_k)\in A^\delta,  v_1\in [a_1-2\delta,a_1+2\delta],\cdots,v_{k-1}\in [a_{k-1}-2\delta,a_{k-1}+2\delta]} J(\mu,v_1,\cdots,v_k)
\end{align}
Note that since the sequence $\{L_n\}$ under $\mathbb{P}$ is exponentially tight and $\prod_{i=1}^{k-1} [a_i-2\delta,a_i+2\delta] \times [0,a_k+\delta]$ is compact, the weak LDP result Theorem \ref{theorem 2.2} is applicable.
Sending $\delta \rightarrow 0$, using   \cite[Lemma 4.1.6]{dz},
\begin{align} \label{313}
\lim_{\delta \rightarrow 0}& \inf_{(\mu,v_k)\in A^\delta,  v_1\in [a_1-2\delta,a_1+2\delta],\cdots,v_{k-1}\in [a_{k-1}-2\delta,a_{k-1}+2\delta]} J(\mu,v_1,\cdots,v_k) \nonumber \\
& = \inf_{(\mu,v_k)\in \bar{A}}J(\mu,a_1,\cdots,a_{k-1},a_k-z).
\end{align}
Note that although $J$ is not necessarily a good rate function,  \cite[Lemma 4.1.6]{dz} is applicable since intervals $[a_i-2\delta,a_i+2\delta]$ and $[0,a_k+\delta]$ are compact and the relative entropy has compact sub-level sets. Therefore, using \eqref{311}, \eqref{312}, and \eqref{313},
\begin{align*}
 &\limsup_{\delta \rightarrow 0}\limsup_{n\rightarrow \infty}\frac{1}{n}\log \mathbb{P}\Big(\big(L_n,\frac{\phi_k(X_1)}{n}\big)\in \bar{A}|C_n^\delta\Big) \\
&\leq  \limsup_{\delta \rightarrow 0}\limsup_{n\rightarrow \infty} \frac{1}{n}\log \mathbb{P}\Big(\Big\{\big(L_n,\frac{\phi_k(X_1)}{n}\big)\in \bar{A}\Big\} \cap C_n^\delta\Big) - \liminf_{\delta \rightarrow 0}\liminf_{n\rightarrow \infty} \frac{1}{n}\log \mathbb{P}(C_n^\delta) \\
&\leq -\inf_{(\mu,z)\in \bar{A}} J^{max}_1(\mu,z).
\end{align*}
This and \eqref{310} conclude the proof of upper bound large deviation.

Step 2. Lower bound large deviations: it suffices to show that  for any $z,\epsilon>0$ and open set $U$ containing arbitrary $\mu\in \mathcal{M}_1(\R^+)$,
\begin{align} \label{lower ldp}
-J^{max}_1(\mu,z)\leq  \liminf_{\delta \rightarrow 0} \liminf_{n\rightarrow \infty} \frac{1}{n}\log \mathbb{P}((L_n,M_n)\in U\times (z-\epsilon,z+\epsilon)|C_n^\delta).
\end{align}
If $\int \phi_k d\mu > a_k-z$ or $\int \phi_i d\mu \neq a_i$ for some $1\leq i\leq k-1$, then \eqref{lower ldp} is obvious since $J^{max}_1(\mu,z)=\infty$. Thus, throughout the proof we assume that $\int \phi_k d\mu \leq a_k-z$ and $\int \phi_i d\mu = a_i$ for $1\leq i\leq k-1$. Since $\phi_k$ is bounded from below and continuous, according to the   Portmanteau theorem, there exists $r_0>0$ such that
\begin{align} \label{cond}
d(\nu,\mu)<r_0 \Rightarrow \int \phi_k d\nu > \int \phi_kd\mu - z.
\end{align} 
Take a positive integer $j\geq 2$ such that
\begin{align*}
a_k-jz<\int \phi_k d\mu \leq a_k-(j-1)z,
\end{align*}
and denote $0\leq w:=a_k-(j-1)z-\int \phi_kd\mu<z$. Also, define two events $E^1_{n,\delta}$ and $E^2_{n}$ by
\begin{gather*}
E^1_{n,\delta} :=\bigcap_{i=1}^{j-1} \Big\{\frac{\phi_k(X_i)}{n}\in (z-\frac{\delta}{4(j-1)},z+\frac{\delta}{4(j-1)})\Big\}\bigcap \Big\{ \frac{\phi_k(X_{j})}{n} \in (w-\frac{\delta}{4}, w+\frac{\delta}{4})\Big\}, \\
E^2_{n}:=\bigcap_{i=j+1}^n \Big\{\frac{\phi_k(X_i)}{n}<z\Big\}.
\end{gather*}
It is obvious that for sufficiently small $\delta>0$,
\begin{align*}
E^1_{n,\delta}\cap E^2_{n} \Rightarrow M_n\in (z-\epsilon,z+\epsilon).
\end{align*}  Therefore, for the open set $U = B(\mu,r)$ with $r<\frac{r_0}{2}$, for  sufficiently small $\delta>0$,
\begin{align} \label{314}
&\liminf_{n\rightarrow \infty} \frac{1}{n}\log \mathbb{P}((L_n,M_n)\in U\times (z-\epsilon,z+\epsilon)|C_n^\delta) \nonumber \\
 &\geq \liminf_{n\rightarrow \infty} \frac{1}{n}\log \mathbb{P}(\{L_n\in U\}\cap E^1_{n,\delta}\cap E^2_{n} | C_n^\delta) \nonumber \\
&\geq \liminf_{n\rightarrow \infty} \frac{1}{n}\log \mathbb{P}(\{L_n\in U\}\cap E^1_{n,\delta}\cap E^2_{n} \cap C_n^\delta) - \limsup_{n\rightarrow \infty} \frac{1}{n}\log \mathbb{P}(C_n^\delta).
\end{align}
According to the LDP for the sequence $(S^1_n,\cdots,S^k_n)$ and   \cite[Lemma 4.1.6]{dz},
\begin{align} \label{315}
&\limsup_{\delta \rightarrow 0} \limsup_{n\rightarrow \infty} \frac{1}{n}\log \mathbb{P}(C_n^\delta) \nonumber \\
&\leq \limsup_{\delta \rightarrow 0} \Big[-\inf_{v_1\in [a_1-\delta,a_1+\delta],\cdots,v_k\in [a_k-\delta,a_k+\delta]}I(v_1,\cdots,v_k)\Big] = -I(a_1,\cdots,a_k).
\end{align}
Also, it is obvious that 
\begin{align} \label{3100}
&\mathbb{P}(\{L_n\in U\}\cap E^1_{n,\delta}\cap E^2_{n}\cap C_n^\delta) \nonumber \\
&= \mathbb{P}(\{L_n\in U\}\cap E^1_{n,\delta}\cap  C_n^\delta) - \mathbb{P}(\{L_n\in U\}\cap E^1_{n,\delta}\cap (E^2_{n})^c\cap C_n^\delta) \nonumber \\
&\geq \mathbb{P}(\{L_n\in U\}\cap E^1_{n,\delta}\cap  C_n^\delta)
- (n-j)\mathbb{P}\Big(\{L_n\in U\}\cap E^1_{n,\delta}\cap \big\{\frac{\phi_k(X_{j+1})}{n}\geq z\big\}\cap C_n^\delta\Big).
\end{align}
Let us first estimate the following quantity:
\begin{align*}
\liminf_{\delta \rightarrow 0} \liminf_{n\rightarrow \infty} \frac{1}{n}\log \mathbb{P}(\{L_n\in B(\mu,r)\} \cap E^1_{n,\delta} \cap  C_n^\delta).
\end{align*} For sufficiently small $\delta>0$, one can take  open sets $D^\delta_n$ in $(\R^+)^k$ such that for  sufficiently large $n$,
\begin{align*}
\prod_{i=1}^{k-1} \Big(a_i-\frac{\delta}{2},a_i+\frac{\delta}{2}\Big)\times \Big(\int \phi_kd\mu-\frac{\delta}{2},\int \phi_kd\mu+\frac{\delta}{2}\Big)\subset  D_n^\delta,
\end{align*}
\begin{align*}
E^1_{n,\delta}\cap
\{ (S^1_{n-j},\cdots,S^{k-1}_{n-j},S^k_{n-j})\in D^\delta_n \} \Rightarrow  E^1_{n,\delta}\cap C^\delta_n,
\end{align*}
thanks to the condition (C3) in Assumption \ref{assume0}.
Therefore, according to the LDP result Theorem \ref{theorem 2.2}, for sufficiently small $\delta>0$,
\begin{align} \label{316}
&\liminf_{n\rightarrow \infty} \frac{1}{n}\log \mathbb{P}(\{L_n\in B(\mu,r)\} \cap E^1_{n,\delta} \cap  C_n^\delta)  \nonumber\\
&\geq \liminf_{n\rightarrow \infty} \frac{1}{n}\log \mathbb{P}(\{L_{n-j}\in B(\mu, \frac{r}{2}) \}  \cap E^1_{n,\delta}\cap \{ (S^1_{n-j},\cdots,S^k_{n-j})\in D^\delta_n \}) \nonumber \\
&\geq \liminf_{n\rightarrow \infty}\frac{1}{n}\log \mathbb{P}( E^1_{n,\delta})+\liminf_{n\rightarrow \infty} \frac{1}{n}\log \mathbb{P}((L_{n-j}, S^1_{n-j},\cdots,S^k_{n-j})\in B(\mu, \frac{r}{2}) \times D_n^\delta) \nonumber \\
&\geq -\inf_{\nu\in B(\mu, \frac{r}{2}),(v_1,\cdots,v_k)\in D_n^\delta} J(\nu,v_1,\cdots,v_k) \nonumber \\
&\geq -J(\mu,a_1,\cdots,a_{k-1},\int \phi_kd\mu) = -H(\mu|\lambda) = -J(\mu,a_1,\cdots,a_{k-1},a_k-z).
\end{align}
Note that in the fourth line, we used \eqref{301} in Lemma \ref{lemma 3.1}.

Now, let us show that
\begin{align} \label{31000}
\limsup_{\delta \rightarrow 0}\limsup_{n\rightarrow \infty} &\frac{1}{n}\log \mathbb{P}\Big(\{L_n\in B(\mu,r)\}\cap E^1_{n,\delta}\cap \big\{\frac{\phi_k(X_{j+1})}{n}\geq z\big\}\cap C_n^\delta\Big)=-\infty.
\end{align}
Note that under $E^1_{n,\delta}\cap \{\frac{\phi_k(X_{j+1})}{n}\geq z\}\cap C_n^\delta$,
\begin{align*}
a_k+\delta \geq \frac{\phi_k(X_1)+\cdots+\phi_k(X_{j+1})}{n} > (j-1)z+w-\frac{\delta}{2}+z = a_k-\int \phi_k d\mu + z-\frac{\delta}{2},
\end{align*} 
which implies that $\int \phi_k d\mu > z-\frac{3\delta}{2}$. Thus, if $\int \phi_k d\mu<z$, then $E^1_{n,\delta}\cap \{\frac{\phi_k(X_{j+1})}{n}\geq z\}\cap C_n^\delta$ is an empty set for sufficiently small $\delta>0$, which  implies \eqref{31000}. Thus,  from now on we assume that $\int \phi_k d\mu \geq z$. One can take closed sets $F^\delta_n$ in $(\R^+)^k$ such that for  sufficiently large $n$,
\begin{align*}
F^\delta_n \subset \prod_{i=1}^{k-1} [a_i-2\delta,a_i +2\delta]\times [0,\int \phi_k d\mu-z+2\delta],
\end{align*}
\begin{align*}
E^1_{n,\delta}\cap \big\{\frac{\phi_k(X_{j+1})}{n}\geq z\big\}\cap C_n^\delta \Rightarrow  (S^1_{n-j-1},\cdots,S^k_{n-j-1})\in F^\delta_n.
\end{align*}
 Applying  the LDP result Theorem \ref{theorem 2.2} and Remark \ref{remark 2.3}, we have
\begin{align} \label{317}
\limsup_{n\rightarrow \infty} &\frac{1}{n}\log \mathbb{P}\Big(\{L_n\in B(\mu,r)\}\cap E^1_{n,\delta}\cap \big\{\frac{\phi_k(X_{j+1})}{n}\geq z\big\}\cap C_n^\delta\Big) \nonumber \\
&\leq \limsup_{n\rightarrow \infty} \frac{1}{n}\log \mathbb{P}((L_n, S^1_{n-j-1},\cdots,S^k_{n-j-1})\in \bar{B}(\mu,r)\times F^\delta_n) \nonumber \\
&\leq -\inf_{\nu\in \bar{B}(\mu,r), (v_1,\cdots,v_k)\in F^\delta_n }J(\nu, v_1,\cdots,v_k).
\end{align}
Taking a limit $\delta \rightarrow 0$, using \cite[Lemma 4.1.6]{dz},
\begin{align*}
\lim_{\delta \rightarrow 0} \inf_{\nu\in\bar{B}(\mu,r), (v_1,\cdots,v_k)\in F^\delta_n}J(\nu, v_1,\cdots,v_k) = \inf_{\nu\in \bar{B}(\mu,r),v_k\in [0, \int \phi_kd\mu-z]}J(\nu, a_1,\cdots,a_{k-1},v_k).
\end{align*}
Since we chose $r_0$ satisfying \eqref{cond} and $r<\frac{r_0}{2}$,
\begin{align*}
\inf_{\nu\in \bar{B}(\mu,r),v_k\in [0, \int \phi_kd\mu-z]}J(\nu, a_1,\cdots,a_{k-1},v_k)  = -\infty.
\end{align*}
Therefore, sending $\delta \rightarrow 0$ in \eqref{317}, one can deduce \eqref{31000}. 
Applying \eqref{31000} and \eqref{316} to \eqref{3100}, we obtain
\begin{align} \label{318}
\liminf_{\delta \rightarrow 0} \liminf_{n\rightarrow \infty} \frac{1}{n}\log \mathbb{P}(\{L_n\in B(\mu,r)\}\cap E^1_{n,\delta}\cap E^2_{n,\delta} \cap C_n^\delta)  \geq  -J(\mu,a_1,\cdots,a_k-z).
\end{align}
Thus, using \eqref{314}, \eqref{315}, and \eqref{318}, we finally obtain \eqref{lower ldp} since
\begin{align*}
\liminf_{\delta \rightarrow 0} &\liminf_{n\rightarrow \infty} \frac{1}{n}\log \mathbb{P}((L_n,M_n)\in B(\mu,r)\times (z-\epsilon,z+\epsilon)|C_n^\delta) \\
&\geq -J(\mu,a_1,\cdots,a_{k-1},a_k-z)+I(a_1,\cdots,a_k)  \\
&= -J^{max}_1(\mu,z).
\end{align*}
\end{proof}

\begin{proof}[Proof of Theorem \ref{theorem 1.2}]
Recall that when $\mu \ll dx$, $H(\mu | \lambda) = -h(\mu) + \int \phi_1 d\mu + C$ for some constant $C$. Thus, using Proposition \ref{prop 2.4} and the rate function formula for $J$ in Theorem \ref{theorem 2.2}, one can conclude that
\begin{align*}
&J^{max}_1(\mu,z) \\
=&\begin{cases} -h(\mu) - K(a_1,\cdots,a_k) &\text{if}\ \int \phi_1d\mu=a_1, \cdots, \int \phi_{k-1}d\mu=a_{k-1}, \int \phi_kd\mu \leq a_k-z ,\\
\infty &\text{otherwise}, \end{cases}
\end{align*}
for 
\begin{align*}
K(a_1,\cdots,a_k) = \inf_{\mu\in \mathcal{M}_1(\R^+)} \Big\{-h(\mu)\ \Big \vert \int \phi_1d\mu=a_1,\cdots,\int \phi_{k-1}d\mu=a_{k-1},\int \phi_kd\mu\leq a_k \Big\}.
\end{align*}
This concludes the proof of Theorem \ref{theorem 1.2}.
\end{proof}

\subsection{Localization and delocalization} \label{section 4.2}
In this section, we study the localization and delocalization phenomena using the large deviation result Theorem \ref{theorem 1.2}. First, we prove Theorem \ref{theorem 1.3}, which is about the delocalization result.

\begin{proof}[Proof of Theorem \ref{theorem 1.3}]

First, let us consider the case when $(a_1,\cdots,a_{k-1})\in \mathcal{S}_1$ and $a_k>g_2(a_1,\cdots,a_{k-1})$. Applying the LDP result Theorem \ref{theorem 3.1} and Proposition \ref{prop 2.4},
\begin{align*} 
\limsup_{\delta \rightarrow 0} &\limsup_{n\rightarrow \infty} \frac{1}{n}\log \mathbb{P}(M_n \in [a_k-g_2(a_1,\cdots,a_{k-1})+\epsilon,a_k] | C^\delta_n) \\
&\leq -\inf_{z\in [a_k-g_2(a_1,\cdots,a_{k-1})+\epsilon,a_k] }I(a_1,\cdots,a_{k-1},a_k-z) + I(a_1,\cdots,a_k)  \\
&= -\inf_{w\in [0,g_2(a_1,\cdots,a_{k-1})-\epsilon] }I(a_1,\cdots,a_{k-1},w) + I(a_1,\cdots,a_k)<0.
\end{align*}
The last inequality follows from  Proposition \ref{prop 2.7}.

Now, suppose that (i) or (ii) holds. Applying the LDP result Theorem \ref{theorem 3.1} and Proposition \ref{prop 2.4} again, we have
\begin{align*}
\limsup_{\delta \rightarrow 0} &\limsup_{n\rightarrow \infty} \frac{1}{n}\log \mathbb{P}(M_n \in [\epsilon,a_k] | C^\delta_n) \\
&\leq -\inf_{z\in [\epsilon,a_k]}I(a_1,\cdots,a_{k-1},a_k-z) + I(a_1,\cdots,a_k)\\
& =  -\inf_{w\in [0,a_k-\epsilon]}I(a_1,\cdots,a_{k-1},w) + I(a_1,\cdots,a_k)< 0.
\end{align*}
The last inequality follows from  Proposition \ref{prop 2.7}.

\end{proof}

We have shown that when $(a_1,\cdots,a_k)$ satisfies (i) or (ii) in Theorem \ref{theorem 1.3}, localization does not happen. We now consider the case when $(a_1,\cdots,a_{k-1})\in \mathcal{S}_1$ and $a_k>g_2(a_1,\cdots,a_{k-1})$.  As  explained in Section \ref{section 2}, unlike the upper tail estimate \eqref{upper} for the maximum component $M_n$,  the lower tail estimate:
\begin{align} \label{330}
\limsup_{\delta \rightarrow 0} &\limsup_{n\rightarrow \infty} \frac{1}{n}\log \mathbb{P}(M_n \leq a_k-g_2(a_1,\cdots,a_{k-1})-\epsilon | C^\delta_n) < 0
\end{align}
does not hold. In fact, according to the large deviation result Theorem \ref{theorem 3.1} and Proposition \ref{prop 2.7}, we have
\begin{align*}
\liminf_{\delta \rightarrow 0} &\liminf_{n\rightarrow \infty} \frac{1}{n}\log \mathbb{P}(M_n < a_k-g_2(a_1,\cdots,a_{k-1})-\epsilon | C^\delta_n) \\ 
&\geq -\inf_{z\in [0,a_k-g_2(a_1,\cdots,a_{k-1})-\epsilon)} I(a_1,\cdots,a_{k-1},a_k-z) + I(a_1,\cdots,a_{k-1},a_k) = 0.
\end{align*} 
As mentioned in Section \ref{section 2}, unlike the upper tail estimate \eqref{upper}, the correct scaling factor in the lower tail estimate of type \eqref{330} highly depends on the structures of  functions $\phi_i$'s. We now prove Theorem \ref{theorem 1.4}, which is about the lower tail estimate and the localization result. Since the correct scaling factor grows slowly than $n$, the proof is completely different from the standard large deviation arguments we have used so far, and we partially adapt the  idea  in \cite{cha2}.

\begin{proof}[Proof of Theorem \ref{theorem 1.4}] 
We partially follow the argument in \cite{cha2}. Recall that $1\leq m\leq k-1$ is the largest index such that $p_m\neq 0$, and it is obvious that $p_m<0$.  Throughout the proof, we define $s:=a_k-g_2(a_1,\cdots,a_{k-1})$ and choose a sufficiently small $\theta>0$ such that $p_m+3\theta<0$. In order to alleviate the notation, we define $\gamma:=\gamma_m$. Choose two numbers $0<\alpha,\beta<1$ satisfying
\begin{align} \label{340}
\frac{1}{2}(1+\gamma+2\alpha) < \beta < 1.
\end{align}
 We first compute  the lower bound of
\begin{align*}
\liminf_{n\rightarrow \infty} \frac{1}{n^{\gamma}}\log \mathbb{Q}(C_n^{\delta}).
\end{align*}
It is obvious that
\begin{align*}
\bigcap_{i=1}^{k-1}\big\{|S^i_{n-1}-a_i|< \frac{\delta}{2}\big\}\bigcap \big\{|S^k_{n-1} - g_2(a_1,\cdots,a_{k-1})| < \frac{\delta}{2}\big\} \bigcap  \big\{|\frac{\phi_k(X_1)}{n}-s|<\frac{\delta}{2}\big\} \Rightarrow C_n^{\delta}.
\end{align*}
Since  $\int \phi_i d\nu = a_i$ for $1\leq i\leq k-1$ and $\int \phi_k d\nu = g_2(a_1,\cdots,a_{k-1})$ (see Lemma \ref{prop 2.9}),  according to the law of large numbers, 
\begin{align} \label{3400}
\lim_{n\rightarrow \infty} \mathbb{Q}\Big(\bigcap_{i=1}^{k-1}\big\{|S^i_{n-1}-a_i|< \frac{\delta}{2}\big\}\bigcap \big\{|S^k_{n-1} - g_2(a_1,\cdots,a_{k-1})| < \frac{\delta}{2}\big\}\Big)=1.
\end{align}
Thus, combining \eqref{302} in Lemma \ref{lemma 3.1} with \eqref{3400}, we obtain
\begin{align} \label{341}
\liminf_{\delta \rightarrow 0}\liminf_{n\rightarrow \infty} \frac{1}{n^{\gamma}}\log \mathbb{Q}(C_n^{\delta}) \geq p_ms^{\gamma}.
\end{align}
Now, let us compute the upper bound of 
\begin{align*}
\limsup_{\delta \rightarrow 0} \limsup_{n\rightarrow \infty} \frac{1}{n^{\gamma}}\log \mathbb{Q}(\{M_n<s-\epsilon\} \cap C_n^\delta).
\end{align*}
For $n\in \mathbb{N}$, let us choose $f(n)$ satisfying $\phi_k(f(n))=n^\alpha$, and define   $u_n:=\E^\nu\big[ \phi_k(X_i) \1_{X_i\leq f(n)}\big]$. 
Note that $f$ is increasing and $\lim_{n\rightarrow \infty} f(n)=\infty$. Using Assumption \ref{assume0}, the condition \eqref{assume2}, and the change of variables,  for  sufficiently large $n$,
\begin{align}\label{342}
g_2(a_1,\cdots,a_{k-1})- u_n=\int_{f(n)}^{\infty} \phi_k d\nu \leq \int_{f(n)}^{\infty} \phi_k e^{(p_m+\theta)\phi_m} dx \nonumber \\
\leq \int_{n^\alpha}^{\infty} ye^{(p_m+2\theta)y^{\gamma}}y^M dy \leq C\exp((p_m+3\theta)n^{\alpha \gamma}).
\end{align}
 Define the event $E^1_n$ by
\begin{align*}
E^1_{n} := \{|\sum_{i=1}^n (\phi_k(X_i) \1_{X_i\leq f(n)}-u_n)|>n^{\beta}\}.
\end{align*}
Since $0\leq \phi_k(X_i) \1_{X_i\leq f(n)} \leq n^\alpha$, according to the Hoeffding's inequality \cite{h},
\begin{align}\label{343}
\mathbb{Q}(E^1_{n} ) \leq 2\exp(-2n^{2\beta-1-2\alpha }).
\end{align}
Now, define $E^2_{n}$ to be the event for which there exists the set of indices $I$ satisfying $|I| = h(n):= [n^{\gamma-\frac{\alpha \gamma}{2}}]$ such that $X_i>f(n)$ for all $i\in I$. Then, using Assumption \ref{assume0} and the change of variables, for  sufficiently large $n$,
\begin{align} \label{344}
\mathbb{Q}(E^2_{n} ) &<  {n\choose h(n)}\Big[\int_{f(n)}^\infty e^{(p_m+\theta)\phi_m} dx\Big]^{h(n)} \nonumber \\
& < Cn^{h(n)} \Big[\int_{n^{\alpha }} ^\infty e^{(p_m+2\theta)y^\gamma} y^M dy\Big]^{h(n)} <C\exp \big[C(p_m+3\theta)n^{\gamma +\frac{\alpha  \gamma}{2}}\big].
\end{align}
Finally, let us fix a constant $\eta>0$  satisfying
\begin{align} \label{eta}
s-\epsilon <\big(\frac{s}{(s+\eta)^{\gamma}}\big)^{\frac{1}{1-\gamma}},
\end{align}
and then define $E^3_{n}$ to be the event for which $\sum_{i\in I} \phi_m(X_i) >(s+\eta)^{\gamma}n^{\gamma}$ for some $I$ satisfying $|I|<h(n)$. 
Using the result \eqref{303} in Lemma \ref{lemma 3.1}, for sufficiently large $n$,
\begin{align} \label{345}
\mathbb{Q}(E^3_{n} ) &<  C{n\choose h(n)}\big[(s+\eta)^{\gamma}n^{\gamma}-Ch(n)\big]^{h(n)-1} \exp\big[(p_m+\theta)((s+\eta)^{\gamma}n^{\gamma}-Ch(n))\big]\nonumber  \\
&<C\exp \big[(p_m+2\theta)((s+\eta)^{\gamma}n^{\gamma}-Ch(n))\big].
\end{align}
Now, let us check that
\begin{align} \label{346}
(E^1_{n})^c \cap (E^2_{n})^c \cap (E^3_{n})^c \cap C^{\delta}_n \Rightarrow \big\{M_n >  (\frac{s-2\delta}{(s+\eta)^{\gamma}})^{\frac{1}{1-\gamma}} \big\}\cap  C^{\delta}_n.
\end{align}
If we define $I:= \{1\leq i\leq n | X_i>f(n)\}$, then $(E^2_{n})^c \cap (E^3_{n})^c$ imply $|I|<h(n)$ and 
\begin{align} \label{3488}
\sum_{i\in I} \phi_m(X_i) \leq (s+\eta)^{\gamma}n^{\gamma}.
\end{align}
Under the event $(E^1_{n})^c \cap C^{\delta}_n$,
\begin{align*} 
|\sum_{i\in I} \phi_k(X_i) - (a_k-u_n)n| < \delta n+ n^\beta.
\end{align*}
Combining this with \eqref{342}, we obtain
\begin{align} \label{349}
|\sum_{i\in I} \phi_k(X_i) - sn| &= |\sum_{i\in I} \phi_k(X_i) - (a_k-g_2(a_1,\cdots,a_{k-1}))n| \nonumber  \\
 &< \delta n+ n^\beta+C\exp(C(p_m+3\theta)n^{\alpha \gamma})=:r(n).
\end{align}
Thus, using \eqref{3488}, \eqref{349}, we have
\begin{align*}
sn-r(n)<\sum_{i\in I} \phi_k(X_i) &\leq   \Big[\max_{i\in I} \frac{\phi_k(X_i)}{\phi_m(X_i)}\Big] \cdot \sum_{i\in I} \phi_m(X_i) \leq  \Big[\max_{i\in I} \frac{\phi_k(X_i)}{\phi_m(X_i)}\Big]\cdot(s+\eta)^{\gamma}n^{\gamma},
\end{align*}
which implies that for some index $i$,
\begin{align*}
\frac{\phi_k(X_i)}{\phi_m(X_i)} \geq \frac{sn-r(n)}{(s+\eta)^\gamma n^\gamma}.
\end{align*}
Thus, combining this with the condition \eqref{assume2}, for sufficiently large $n$, 
\begin{align*}
M_n^{1-\gamma} \geq \frac{s-2\delta}{(s+\eta)^{\gamma}},
\end{align*}
since $\lim_{n\rightarrow \infty} \frac{r(n)}{n}=\delta$ (recall that $p_m+3\theta<0$).
This concludes the proof of \eqref{346}.

Therefore, using \eqref{343}, \eqref{344}, \eqref{345}, and \eqref{346}, for each $\delta>0$,
\begin{align} \label{347}
\limsup_{n\rightarrow \infty} &\frac{1}{n^{\gamma}}\log \mathbb{Q}\Big(\big\{M_n<(\frac{s-2\delta}{(s+\eta)^{\gamma}})^{\frac{1}{1-\gamma}}\big\} \cap C_n^\delta\Big) \nonumber \\
&\leq \limsup_{n\rightarrow \infty} \frac{1}{n^{\gamma}}\log \mathbb{Q}(E^1_n \cup E^2_n \cup E^3_n) \leq (p_m+2\theta)(s+\eta)^{\gamma}
\end{align} 
(recall that due to the condition \eqref{340}, $2\beta-1-2\alpha > \gamma$).
Note that due to the condition \eqref{eta}, for sufficiently small $\delta>0$,
\begin{align*}
s-\epsilon < \big(\frac{s-2\delta}{(s+\eta)^{\gamma}}\big)^{\frac{1}{1-\gamma}}.
\end{align*}
Thus, using \eqref{341} and \eqref{347}, for such $\eta>0$,
\begin{align}\label{348}
&\limsup_{\delta \rightarrow 0}\limsup_{n\rightarrow \infty} \frac{1}{n^{\gamma }}\log \mathbb{Q}(M_n<s-\epsilon |C_n^{\delta})  \nonumber\\
&\leq
\limsup_{\delta \rightarrow 0}\limsup_{n\rightarrow \infty} \frac{1}{n^{\gamma}}\log \mathbb{Q}\Big(\big\{M_n<\big(\frac{s-2\delta}{(s+\eta)^{\gamma}}\big)^{\frac{1}{1-\gamma}}\big\} \cap C_n^{\delta}\Big) - \liminf_{\delta \rightarrow 0}\liminf_{n\rightarrow \infty} \frac{1}{n^{\gamma}}\log \mathbb{Q}(C_n^{\delta}) \nonumber \\
&\leq (p_m+2\theta)(s+\eta)^{\gamma}-p_m s^{\gamma}.
\end{align}
Since for sufficiently small $\theta>0
$, $(p_m+2\theta)(s+\eta)^{\gamma}-p_ms^{\gamma}<0$ (recall that $p_m+2\theta<0$), proof of \eqref{141} is concluded.

Now, let us prove \eqref{142}. Recall that we have the upper tail estimate:
\begin{align} \label{upper2}
\limsup_{\delta \rightarrow 0} &\limsup_{n\rightarrow \infty} \frac{1}{n}\log \mathbb{Q}(M_n \geq a_k-g_2(a_1,\cdots,a_{k-1})+\epsilon | C^\delta_n) < 0
\end{align} 
according to Theorem  \ref{theorem 1.3}. Indeed, changing the reference measure from $\mathbb{P}$ to $\mathbb{Q}$ does not affect the estimate \eqref{upper2} due to the observation Remark \ref{remark 3.2}.
Combining \eqref{upper2} with \eqref{141}, \eqref{142} immediately follows. 

\end{proof}

Theorem \ref{theorem 1.4} claims that when $(a_1,\cdots,a_{k-1})\in \mathcal{S}_1$ and $a_k>g_2(a_1,\cdots,a_{k-1})$, localization happens in the sense that \eqref{142} holds.
One can also show that localization only happens at the single site. Let us denote $N_n$ by the second largest component among $\frac{\phi_k(X_i)}{n}$'s, and  prove that $N_n$ gets closer to zero in the following sense:

\begin{theorem}\label{theorem 3.6}
Under the same condition as in Theorem \ref{theorem 1.4}, for any $\epsilon>0$,
\begin{align} \label{360}
\lim_{\delta \rightarrow 0}\lim_{n\rightarrow \infty} \mathbb{Q}(\{|M_n-(a_k-g_2(a_1,\cdots,a_{k-1}))|<\epsilon\} \cap \{|N_n|<\epsilon\}|C_n^{\delta})=1.
\end{align}
\end{theorem}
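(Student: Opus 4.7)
The plan is to bootstrap from Theorem \ref{theorem 1.4}. Writing $s:=a_k-g_2(a_1,\ldots,a_{k-1})$, the estimate \eqref{142} already yields $\mathbb{Q}(|M_n-s|<\epsilon\,|\,C_n^\delta)\to 1$, so by a union bound it is enough to prove
\begin{align*}
\lim_{\delta\to 0}\lim_{n\to\infty}\mathbb{Q}(N_n\geq \epsilon\,|\,C_n^\delta)=0.
\end{align*}
I would split $\{N_n\geq \epsilon\}\cap C_n^\delta$ into the pieces $\{M_n<s-\epsilon/2\}\cap C_n^\delta$ and $\{N_n\geq \epsilon,\,M_n\geq s-\epsilon/2\}\cap C_n^\delta$. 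The first piece has vanishing conditional probability directly from the lower tail estimate \eqref{141} of Theorem \ref{theorem 1.4}.

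For the second piece I would use exchangeability of the $X_i$'s under $\mathbb{Q}=\nu^{\otimes\mathbb{N}}$ and sum over the ordered pair of indices carrying the top two values of $\phi_k$:
\begin{align*}
\mathbb{Q}(\{N_n\geq \epsilon,\,M_n\geq s-\epsilon/2\}\cap C_n^\delta)\leq n(n-1)\,\mathbb{Q}(F_n),
\end{align*}
where $F_n:=\{\phi_k(X_1)\geq(s-\epsilon/2)n\}\cap\{\phi_k(X_2)\geq \epsilon n\}\cap C_n^\delta$. On $F_n$, the bound $\sum_{i=1}^n\phi_k(X_i)\leq(a_k+\delta)n$ coming from $C_n^\delta$ together with $\phi_k(X_1)+\phi_k(X_2)\geq(s+\epsilon/2)n$ forces, for all small $\delta$ and large $n$,
\begin{align*}
\frac{1}{n-2}\sum_{i\geq 3}\phi_k(X_i)\leq g_2(a_1,\ldots,a_{k-1})-\epsilon/5.
\end{align*}
By Lemma \ref{prop 2.9}, $\int\phi_k\,d\nu=g_2(a_1,\ldots,a_{k-1})$, and $t\mapsto \log\int e^{t\phi_k}\,d\nu$ is finite on $(-\infty,0]$ (since $\nu$ is a probability measure and $\phi_k\geq 0$). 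A one-sided Chernoff bound applied to the i.i.d.\ sequence $(\phi_k(X_i))_{i\geq 3}$ under $\mathbb{Q}$ then yields a constant $c=c(\epsilon)>0$, independent of $(X_1,X_2)$, with $\mathbb{Q}(F_n)\leq e^{-c(n-2)}$ after conditioning on $(X_1,X_2)$ and integrating out.

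To finish, I would invoke the lower bound \eqref{341} obtained inside the proof of Theorem \ref{theorem 1.4}, which provides $\mathbb{Q}(C_n^\delta)\geq\exp((p_ms^{\gamma_m}+o(1))n^{\gamma_m})$ for small $\delta$ and large $n$. Since $\gamma_m<1$,
\begin{align*}
\frac{\mathbb{Q}(\{N_n\geq \epsilon,\,M_n\geq s-\epsilon/2\}\cap C_n^\delta)}{\mathbb{Q}(C_n^\delta)}\leq \frac{n(n-1)\,e^{-c(n-2)}}{\exp((p_ms^{\gamma_m}+o(1))n^{\gamma_m})}
\end{align*}
decays to zero; combined with the treatment of $\{M_n<s-\epsilon/2\}$, this yields \eqref{360}.

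The main obstacle I foresee is ensuring uniformity of the Chernoff estimate over the conditioning in $(X_1,X_2)$, together with strict positivity of the rate at the chosen deviation target. Uniformity is free because the induced constraint on $\sum_{i\geq 3}\phi_k(X_i)$ depends on $(X_1,X_2)$ only through $\phi_k(X_1)+\phi_k(X_2)$ and is monotone in it, so the worst-case target $g_2(a_1,\ldots,a_{k-1})-\epsilon/5$ is attained uniformly on $\{\phi_k(X_1)+\phi_k(X_2)\geq(s+\epsilon/2)n\}$; strict positivity of the Chernoff rate at this fixed target below the mean follows from essential strict convexity of $\log\int e^{t\phi_k}\,d\nu$ at $t=0$ ($\phi_k$ is non-constant under $\nu$). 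The final comparison is valid precisely because $\mathbb{Q}(C_n^\delta)$ decays only at the sub-linear scale $n^{\gamma_m}$ while the numerator decays exponentially at scale $n$.
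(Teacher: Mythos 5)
Your argument is correct, but it follows a genuinely different route from the paper's. Both proofs use Theorem \ref{theorem 1.4} to dispose of the maximum and then a union bound of order $n^2$ over the pair of sites carrying the two largest values of $\phi_k$, and both exploit the same arithmetic: on the bad event the remaining sites must have $\frac{1}{n-2}\sum_{i\geq 3}\phi_k(X_i)$ bounded by roughly $g_2(a_1,\ldots,a_{k-1})-c\epsilon$, strictly below $g_2$. The divergence is in how this deficit is penalized. The paper transfers the estimate to the reference measure $\mathbb{P}$ via Remark \ref{remark 3.2} (legitimate because the bound is at scale $n$), applies the weak LDP for $(S^1_{n-2},\ldots,S^k_{n-2})$ from Theorem \ref{theorem 2.2}/Proposition \ref{prop 2.4}, and uses the strict monotonicity of the rate function $I$ below $g_2$ (Proposition \ref{prop 2.7}) to compare against $\mathbb{P}(C_n^\delta)\geq e^{-nI(a_1,\ldots,a_k)+o(n)}$, yielding the statement \eqref{366} that the conditional probability decays exponentially at scale $n$. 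You instead stay with $\mathbb{Q}=\nu^{\otimes\mathbb{N}}$, use the fact that $\int\phi_k\,d\nu=g_2$ (Lemma \ref{prop 2.9}) and a one-dimensional Cram\'er--Chernoff lower-tail bound for the i.i.d.\ sum $\sum_{i\geq 3}\phi_k(X_i)$ (the mgf is finite on $(-\infty,0]$ and its left derivative at $0$ is $g_2$, so the rate at $g_2-\epsilon/5$ is strictly positive), and then divide by the stretched-exponential lower bound \eqref{341} on $\mathbb{Q}(C_n^\delta)$. Since the numerator is $e^{-cn}$ and the denominator only costs $e^{O(n^{\gamma_m})}$ with $\gamma_m<1$, you recover the same exponential-at-scale-$n$ decay of the conditional probability, so no strength is lost. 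Your route is more elementary — it needs neither Proposition \ref{prop 2.7}, nor the multivariate LDP machinery, nor the change of reference measure — at the price of being tailored to $\mathbb{Q}$ (it uses $\int\phi_k\,d\nu=g_2$ and \eqref{341}), whereas the paper's estimate is proved under $\mathbb{P}$ and transported to any measure of the form \eqref{320}. Two cosmetic points: the "uniformity over the conditioning in $(X_1,X_2)$" you worry about is automatic, since under the product measure the event $\{\frac{1}{n-2}\sum_{i\geq3}\phi_k(X_i)\leq g_2-\epsilon/5\}$ is independent of $(X_1,X_2)$ and you may simply drop the other factors; and the positivity of the Chernoff rate is best justified by left-differentiability of $t\mapsto\log\int e^{t\phi_k}d\nu$ at $0$ with derivative $g_2$ (monotone convergence), rather than by "essential strict convexity."
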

\begin{proof} Throughout the proof, we use the notation $s:=a_k-g_2(a_1,\cdots,a_{k-1})$ and
\begin{align*}
S^i_{n-2} := \frac{\phi_i(X_{3})+\cdots+\phi_i(X_n)}{n-2}
\end{align*}
for each $1\leq i\leq k$. In order to prove \eqref{360}, it suffices to prove that for any $\epsilon>0$,
\begin{align*}
\lim_{\delta \rightarrow 0}\lim_{n\rightarrow \infty} \mathbb{Q}(\{|M_n-s|<\epsilon\} \cap \{|N_n|\leq 2\epsilon\}|C_n^{\delta})=1.
\end{align*}
Thanks to Theorem \ref{theorem 1.4}, it reduces to show that
\begin{align*}
\lim_{\delta \rightarrow 0} \lim_{n\rightarrow \infty} \mathbb{Q}(\{|M_n-s|<\epsilon\} \cap \{|N_n|>2\epsilon\}|C_n^{\delta})=0.
\end{align*}
Thus, proof is concluded once we show the stronger statement:
\begin{align} \label{366}
\limsup_{\delta \rightarrow 0}\limsup_{n\rightarrow \infty} \frac{1}{n}\log \mathbb{Q}(\{|M_n-s|<\epsilon\} \cap \{|N_n|>2\epsilon\}|C_n^{\delta})<0.
\end{align}
According to Remark \ref{remark 3.2}, it suffices to prove the estimate \eqref{366} under the reference measure $\mathbb{P}$ instead of $\mathbb{Q}$.
One can take closed sets $F_n^\delta$ such that for sufficiently large $n$,
\begin{align*}
F_n^\delta \subset \prod_{i=1}^{k-1} [a_i-2\delta,a_i+2\delta]\times [0,a_k-s-\epsilon+2\delta],
\end{align*}
\begin{align*}
\Big\{\Big \vert \frac{\phi_k(X_1)}{n}-s \Big \vert <\epsilon\Big\} \bigcap \Big\{\frac{\phi_k(X_2)}{n} > 2\epsilon\Big\} \bigcap C^\delta_n \Rightarrow (S^1_{n-2},\cdots,S^k_{n-2})\in F_n^\delta.
\end{align*}
Since the sequence of empirical means $(S^1_n,\cdots,S^k_n)$ satisfy the weak LDP with a rate function $I$, we have
\begin{align} \label{361}
\liminf_{n\rightarrow \infty} \frac{1}{n}\log \mathbb{P}(C_n^\delta) \geq -\inf_{v_i\in (a_i-\frac{\delta}{2},a_i+\frac{\delta}{2})} I(v_1,\cdots,v_k) \geq -I(a_1,\cdots,a_k),
\end{align}
\begin{align} \label{362}
\limsup_{n\rightarrow \infty} &\frac{1}{n}\log \mathbb{P}((S^1_{n-2},\cdots,S^k_{n-2})\in F_n^\delta) \nonumber \\
&\leq -\inf_{v_1\in [a_1-2\delta,a_1+2\delta],\cdots, v_{k-1}\in [a_{k-1}-2\delta,a_{k-1}+2\delta],v_k \in [0,a_k-s-\epsilon+2\delta]} I(v_1,\cdots,v_k).
\end{align}
Sending $\delta\rightarrow 0$, using  \cite[Lemma 4.1.6]{dz} and Proposition \ref{prop 2.7},
\begin{align} \label{363}
\lim_{\delta \rightarrow 0}&\inf_{v_1\in [a_1-2\delta,a_1+2\delta],\cdots, v_{k-1}\in [a_{k-1}-2\delta,a_{k-1}+2\delta],v_k \in [0,a_k-s-\epsilon+2\delta]} I(v_1,\cdots,v_k) \nonumber \\
& = \inf_{v_k\in [0,a_k-s-\epsilon]}I(a_1,\cdots,a_{k-1},v_k)  \nonumber \\
&= I(a_1,\cdots,a_{k-1},g_2(a_1,\cdots,a_{k-1})-\epsilon) > I(a_1,\cdots,a_{k-1},a_k).
\end{align}
Therefore, using  \eqref{361}, \eqref{362}, and \eqref{363}, 
\begin{align} \label{364}
&\limsup_{\delta \rightarrow 0} \limsup_{n\rightarrow \infty} \frac{1}{n}\log \mathbb{P}\Big(\big\{ \big \vert\frac{\phi_k(X_1)}{n}-s \big \vert <\epsilon\big\} \cap \big\{\frac{\phi_k(X_2)}{n} > 2\epsilon\big\}\Big \vert C_n^\delta\Big) \nonumber \\
&\leq \limsup_{\delta \rightarrow 0} \limsup_{n\rightarrow \infty} \frac{1}{n}\log \mathbb{P}((S^1_{n-2},\cdots,S^k_{n-2})\in F_n^\delta) - \liminf_{\delta \rightarrow 0} \liminf_{n\rightarrow \infty} \frac{1}{n}\log \mathbb{P}(C_n^\delta)  < 0.
\end{align}
It is also obvious that
\begin{align*}
\mathbb{P}(\{|M_n-s|<\epsilon\} \cap \{|N_n|>2\epsilon\}|C_n^{\delta}) \leq n^2 \mathbb{P}\Big(\big\{\big \vert\frac{\phi_k(X_1)}{n}-s \big \vert <\epsilon\big\} \cap \big\{\big \vert \frac{\phi_k(X_2)}{n} \big \vert >2\epsilon\big\}\Big \vert C_n^{\delta}\Big).
\end{align*}
Thus, this and \eqref{364} conclude the proof of \eqref{366}.
\end{proof}

\section{Examples} \label{section 5}
In this section, we present some concrete examples of the microcanonical distributions for which the aforementioned theories can be applied. In particular, we establish the principle of equivalence of ensembles, and study the localization and delocalization phenomena.
\subsection{Single constraint} \label{section 5.1}
We first consider the microcanonical ensemble given by a single constraint with an unbounded macroscopic observable. We refer to   \cite[Section 7.3]{dz} for the equivalence of ensembles result for this case. In this section, using the large deviation results obtained in Section \ref{section 3}, we derive the equivalence of ensembles result in a different way. We also prove that localization cannot happen.

Suppose that a function $\phi:(0,\infty)\rightarrow (0,\infty)$ satisfies the conditions (C1) and (C2) in  Assumption \ref{assume0}.
Define $\lambda$ to be a probability measure on $(0,\infty)$ whose  distribution is given by $\frac{1}{Z}e^{-\phi}dx$. The reference measure on the configuration space $(0,\infty)^{\mathbb{N}}$ is given by $\mathbb{P}=\lambda^{\otimes \mathbb{N}}$, and let us   denote  $X_i:\Omega \rightarrow (0,\infty)$ by the projection onto the $i$-th coordinate.
Let us consider the microcanonical ensemble
\begin{align*}
\mathbb{P}((X_1,\cdots,X_n)\in \cdot \ |  \ C_n^\delta),
\end{align*}
 where the constraint is given by
\begin{align} \label{40}
C^\delta_n := \Big\{ \big \vert \frac{\phi(X_1)+\cdots+\phi(X_n)}{n}-a\big \vert \leq \delta\Big\}.
\end{align}
We define $S_n:= \frac{\phi(X_1)+\cdots+\phi(X_n)}{n}$ and $H(p):=\log \int e^{p\phi} d\lambda$. Note that $H(p)<\infty$ if and only if $p<1$, and $H$ is differentiable on the interval $(-\infty,1)$. Thanks to the Cram\'er's theorem, the sequence $S_n$ under the reference measure $\mathbb{P}$ satisfies the (full) LDP with a good rate function $I$ which is the Legendre transform of $H$.

Throughout this section, we assume that $a$ belongs to the image of $(-\infty,1)$ under the map $H'$  in order that the conditional distribution is well-defined. In fact, if $a=H'(p)$ for some $p\in (-\infty,1)$, then $p\in \partial I(a)$, which implies that $I(a)<\infty$. We first derive the equivalence of ensembles result.

\begin{proposition} \label{prop 5.1}
For any fixed positive integer $j$,
\begin{align}\label{411}
\lim_{\delta \rightarrow 0} \lim_{n\rightarrow \infty} \mathbb{P}((X_1,\cdots,X_j)\in \cdot   |  C^\delta_n) = (\lambda^*)^{\otimes j}.
\end{align}
Here $\lambda^*$ is a probability measure on $(0,\infty)$ whose distribution is given by $\frac{1}{Z}e^{p\phi}d\lambda$ for $p\in (-\infty,1)$ satisfying
$H'(p)=a$, or equivalently $\int \phi d\lambda^* = a$. 
\end{proposition}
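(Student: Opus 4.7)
The plan is to specialize the framework of Section \ref{section 3} to the single-constraint setting. First I would invoke Cram\'er's theorem to conclude that $S_n$ under $\mathbb{P}$ satisfies the full LDP with good rate function $I$ equal to the Legendre transform of $H$, and then, following Theorem \ref{theorem 2.2} together with Remark \ref{remark 2.3}(2), obtain the joint LDP for $(L_n, S_n)$ with rate function
\[
J(\mu,v) = \begin{cases} H(\mu|\lambda) + v - \int \phi d\mu & \text{if } \int \phi d\mu \leq v, \\ \infty & \text{otherwise}. \end{cases}
\]
The assumption that $a$ lies in $H'((-\infty,1))$ together with the strict convexity of $H$ on its effective domain (which follows from the essential smoothness of $H$) produces a unique $p \in (-\infty,1)$ with $H'(p) = a$. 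Setting $\lambda^* := \frac{1}{Z}e^{p\phi}d\lambda$, direct computation gives $\int \phi d\lambda^* = H'(p) = a$ and $H(\lambda^* | \lambda) = pa - H(p) = I(a)$.

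Next I would show that $\lambda^*$ is the unique minimizer of $J(\cdot, a)$. For any $\mu$ with $b := \int \phi d\mu \leq a$, Legendre duality gives $H(\mu|\lambda) \geq I(b) \geq pb - H(p)$, so
\[
J(\mu,a) = H(\mu|\lambda) + a - b \geq pb - H(p) + a - b = I(a) + (1-p)(a-b) \geq I(a),
\]
since $p<1$. Equality forces $b=a$ and then $\mu = \lambda^*$ by strict convexity of $H(\cdot | \lambda)$ on the affine slice $\{\int \phi d\cdot = a\}$. With the unique minimizer secured, the Gibbs conditioning argument of Lemma \ref{prop 2.6} carries over verbatim with $F = \{S_n = a\}$ and $F_\delta = \{|S_n - a| \leq \delta\}$, yielding
\[
\lim_{\delta \to 0} \lim_{n \to \infty} \mathbb{P}(L_n \in \cdot \,|\, C_n^\delta) = \delta_{\lambda^*}.
\]
The conclusion \eqref{411} then follows exactly as at the end of the proof of Theorem \ref{theorem 1.1}, by invoking \cite[Proposition 2.2]{sz} to pass from convergence of empirical measures to a deterministic limit to convergence of the $j$-marginals to $(\lambda^*)^{\otimes j}$.

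The only genuinely substantive step, as compared with the multi-constraint Theorem \ref{theorem 1.0}, is the conclusion $b=a$ at the optimum: this is what rules out the phase transition phenomenon characteristic of the multiple-constraint case. Here it is immediate from the strict inequality $p<1$ together with one-dimensional convex duality, whereas in the multi-constraint setting the analogous computation can admit a genuine gap $a_k > \int \phi_k d\lambda^*$, which drives the localization phenomenon of Theorem \ref{theorem 1.4}. The remaining steps are routine: exponential tightness of $\{L_n\}$ under $\mathbb{P}$ is standard by Sanov, and the weak LDP for the pair $(L_n, S_n)$ in fact upgrades to a full LDP here because $S_n$ itself satisfies a full LDP by Cram\'er.
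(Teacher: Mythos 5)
Your proposal is correct and takes essentially the same route as the paper: the single-constraint joint LDP of Remark \ref{remark 2.3}, the Gibbs conditioning argument of Lemma \ref{prop 2.6}, identification of $\lambda^*$ as the unique minimizer of $J(\cdot,a)$ via the bound $H(\mu|\lambda)\geq p\int\phi\,d\mu-H(p)$ (the paper writes this as the decomposition $H(\mu|\lambda)=H(\mu|\lambda^*)+p\int\phi\,d\mu-H(p)$, which yields uniqueness directly from $H(\mu|\lambda^*)=0\Rightarrow\mu=\lambda^*$ without your extra appeal to strict convexity of $H(\cdot|\lambda)$ on the slice $\{\int\phi\,d\mu=a\}$), and the passage to $j$-marginals via \cite{sz}. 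One minor inaccuracy: strict convexity of $H$ on $(-\infty,1)$ does not follow from essential smoothness, but rather from $H''(q)=\mathrm{Var}_{\nu_q}(\phi)>0$ for the tilted measures $\nu_q$; this does not affect the argument, since only the existence and uniqueness of $p$ with $H'(p)=a$ is used.
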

\begin{proof}
Uniqueness of $p\in (-\infty,1)$ satisfying $H'(p)=a$ is obvious since $H$ is strictly convex on $(-\infty,1)$.  According to the LDP result for the single constraint  case (see Remark \ref{remark 2.3}) and the Gibbs conditioning principle, \eqref{411} holds for $\lambda^*$ which is a unique minimizer of
\begin{align} \label{412}
\mu \mapsto H(\mu | \lambda) + a-\int \phi d\mu
\end{align} 
over the constraint $\int \phi d\mu \leq a$.  For any $\mu\ll dx$ with $\int \phi d\mu \leq a$,
\begin{align} \label{413}
 H(\mu | \lambda)+ a-\int \phi d\mu &= H(\mu | \lambda^*) +p\int \phi d\mu +a-\int \phi d\mu +C  \nonumber \\
 &\geq a-(1-p)a+C
\end{align}
 for some universal constant $C$.
Also, equality holds if and only if $\mu = \lambda^*$ since $\int \phi d\lambda^* = a$. Thus, the infimum of \eqref{412} is uniquely obtained at $\mu = \lambda^*$. This concludes the proof.

Note that in the view of \eqref{413}, since 
\begin{align*}
H(\mu | \lambda) + a-\int \phi d\mu = -h(\mu) +a,
\end{align*}  one can also check that
\begin{align} \label{single1}
\lambda^* =  \argmax_{\int \phi d\mu \leq a} h(\mu) = \argmax_{\int \phi d\mu = a} h(\mu) .
\end{align}
\end{proof}

As in Remark \ref{uniform}, Proposition \eqref{prop 5.1}   holds under the uniform distribution on the constraint $C^\delta_n$ as well. Proposition \eqref{prop 5.1} claims that in the equivalence of ensembles viewpoint, when we consider the uniform distribution on the   single constraint \eqref{40} with an unbounded  function $\phi$, it behaves similarly to the case when $\phi$ is bounded (see Theorem \ref{equivalence} for bounded $\phi$). This is a striking difference from the multiple constraints case we have discussed so far.

Now, we show that localization cannot happen when the  microcanonical ensemble is given by a  single constraint \eqref{40}.
\begin{proposition}  \label{prop 5.2}
For any $\epsilon>0$,
\begin{align*}
\limsup_{\delta \rightarrow 0} \limsup_{n\rightarrow \infty} \frac{1}{n}\log \mathbb{P}(M_n \geq \epsilon | C_n^\delta) <0.
\end{align*}
In particular, localization does not happen in the sense that
\begin{align*}
\lim_{\delta \rightarrow 0} \lim_{n\rightarrow \infty} \mathbb{P}(M_n < \epsilon | C_n^\delta) = 1.
\end{align*}
\end{proposition}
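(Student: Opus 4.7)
The approach will be a union bound combined with an asymptotic analysis of the single-coordinate tail $\mathbb{P}(\phi(X_1) \geq n\epsilon \mid C_n^\delta)$, in the spirit of the proof of the delocalization estimate in Theorem \ref{theorem 1.3} but exploiting the smoothness of the Cram\'er rate function in the single-constraint case. First, $\mathbb{P}(M_n \geq \epsilon \mid C_n^\delta) \leq n\, \mathbb{P}(\phi(X_1) \geq n\epsilon \mid C_n^\delta)$, so it suffices to show the right-hand side decays exponentially. Since $H(p) = \log \int e^{p\phi}\, d\lambda$ is finite on $(-\infty, 1)$ (by (C2)) and essentially smooth and strictly convex there, Cram\'er's theorem gives the full LDP for $S_n$ with good rate function $I$ (the Legendre dual of $H$), differentiable on the interior of its effective domain. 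In particular, $\liminf_{n\to\infty} \tfrac{1}{n}\log \mathbb{P}(C_n^\delta) \geq -I(a)$ for every $\delta > 0$.

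For the numerator, I would condition on $X_1$ and use independence of $X_1$ from $(X_2,\ldots,X_n)$ under $\mathbb{P} = \lambda^{\otimes \mathbb{N}}$. Setting $T_n := \tfrac{\phi(X_2) + \cdots + \phi(X_n)}{n - 1}$ and changing variables $u = \phi(x_1) = nw$, the Cram\'er upper bound applied to $T_n$ yields
\[
\mathbb{P}(\{\phi(X_1) \geq n\epsilon\} \cap C_n^\delta) \leq \frac{n}{Z} \int_\epsilon^{a + \delta} \frac{\exp\!\bigl(-n[w + I^\delta(a - w)]\bigr)}{\phi'(\phi^{-1}(nw))}\, dw,
\]
where $I^\delta(v) := \inf_{v' \in [v - \delta, v + \delta]} I(v')$. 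The Jacobian factor is subexponential in $n$: if $\phi'(\phi^{-1}(u)) \geq e^{cu}$ along a sequence $u \to \infty$ for some $c > 0$, then $1/\phi'(\phi^{-1}(\cdot))$ would be integrable on $(0, \infty)$, contradicting $\int_0^\infty du/\phi'(\phi^{-1}(u)) = \int_0^\infty dx = \infty$ via the change of variables. A routine Laplace-type argument (taking $\delta \to 0$ and using continuity of $I$ on the interior of its effective domain) therefore gives
\[
\limsup_{\delta \to 0}\limsup_{n \to \infty} \tfrac{1}{n}\log \mathbb{P}(\{\phi(X_1) \geq n\epsilon\} \cap C_n^\delta) \leq -\inf_{w \in [\epsilon, a]}\bigl(w + I(a - w)\bigr).
\]

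The remaining, and in my view essential, step is to verify the strict inequality $\inf_{w \in [\epsilon, a]}(w + I(a - w)) > I(a)$. Since $\operatorname{dom}(H) = (-\infty, 1)$, one has $I'(v) = (H')^{-1}(v) < 1$ for every $v$ in the range of $H'$. Therefore $\tfrac{d}{dw}[w + I(a - w)] = 1 - I'(a - w) > 0$ on $(0, a)$, so the map is strictly increasing on $[0, a]$ and attains its infimum on $[\epsilon, a]$ at $w = \epsilon$. Consequently,
\[
\inf_{w \in [\epsilon, a]}\bigl(w + I(a - w)\bigr) - I(a) = \int_{a - \epsilon}^a \bigl(1 - I'(v)\bigr)\, dv \geq \epsilon\,\bigl(1 - (H')^{-1}(a)\bigr) > 0.
\]
Combining with the denominator bound yields $\limsup_{\delta \to 0}\limsup_{n \to \infty} \tfrac{1}{n} \log \mathbb{P}(M_n \geq \epsilon \mid C_n^\delta) \leq -\epsilon\,(1 - (H')^{-1}(a)) < 0$, and the ``in particular'' statement follows immediately since $\mathbb{P}(M_n < \epsilon \mid C_n^\delta) = 1 - \mathbb{P}(M_n \geq \epsilon \mid C_n^\delta)$.
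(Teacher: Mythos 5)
Your overall strategy (keep the reference measure $\mathbb{P}$, condition on the value $\phi(X_1)=nw$, and play the explicit cost $w$ of the big coordinate against the gain $I(a)-I(a-w)$ in the rate function, using $I'=(H')^{-1}<1$) is a legitimate and genuinely different route from the paper's, and the final strict-inequality computation is correct, including the boundary cases where $a-w$ falls below the essential infimum of $\phi$ (there $I=+\infty$, which only helps). However, there is a genuine gap in the step where you pass to the density of $\phi(X_1)$: the change of variables introduces the factor $1/\phi'(\phi^{-1}(nw))$, and for the \emph{upper} bound you need this factor to be at most $e^{o(n)}$, i.e.\ you must rule out $\phi'(\phi^{-1}(u))$ being exponentially \emph{small} in $u$. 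Your justification rules out the opposite (and harmless) possibility that $\phi'(\phi^{-1}(u))$ is exponentially large; moreover, as a logical matter a bound $\phi'(\phi^{-1}(u))\geq e^{cu}$ along a sequence of $u$'s would not imply integrability of $1/\phi'(\phi^{-1}(\cdot))$, so the contradiction does not parse even in the direction it aims at. The needed direction is simply not available here: Proposition \ref{prop 5.2} is stated in Section \ref{section 5.1} under (C1)--(C2) only, and (C4) -- exactly the hypothesis Lemma \ref{lemma 3.1} uses to control $\phi'$ by powers of $\phi$ -- is not assumed. Under (C1)--(C2) alone one can let $\phi'$ dip below, say, $e^{-\phi(x)^2}$ on tiny intervals without affecting monotonicity or $\int e^{-c\phi}\,dx<\infty$, so the density of $\phi(X_1)$ has arbitrarily tall spikes and your pointwise integrand bound fails.

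The gap is repairable without changing your strategy: avoid the pointwise density altogether and discretize the $u$-integral into windows, using $\mathbb{P}\big(\phi(X_1)\in[u,u']\big)\leq \tfrac{1}{Z}e^{-u}\,\phi^{-1}(u')$ together with the consequence of (C2) that $\phi^{-1}(v)\leq C_c e^{cv}$ for every $c>0$ (since $\phi^{-1}(v)e^{-cv}\leq \int e^{-c\phi}\,dx<\infty$); this makes the prefactor subexponential in an averaged sense and the Laplace argument, with the Chernoff form of Cram\'er's upper bound to get uniformity in $w$, then goes through. For comparison, the paper sidesteps densities entirely: it tilts the reference measure to $\nu=\tfrac1Z e^{c\phi}\,d\lambda$ with $\int\phi\,d\nu>a$ (legitimate at exponential scale by Remark \ref{remark 3.2}), discards the first coordinate, and uses that the tilted rate function $\bar I$ is non-increasing and strictly decreasing below its mean, so $\inf_{v\le a-\epsilon}\bar I(v)>\bar I(a)$. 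Your (repaired) argument is more quantitative -- it produces the explicit exponential rate $-\epsilon\,(1-(H')^{-1}(a))$ -- while the paper's tilting argument is softer and avoids any regularity issues for $\phi'$.
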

\begin{proof} 
Let us choose a reference measure $\nu = \frac{1}{Z}e^{c\phi}d\lambda$ for $c<1$ such that
\begin{align*}
\int \phi d\nu > a.
\end{align*}
In fact, such $c$ exists since $\lim_{p\rightarrow 1^-}H(p)=\infty$ and $H$ is strictly convex.
According to the Cram\'er's theorem, the sequence $S_n$ under the new reference measure $\mathbb{Q}:=\nu^{\otimes \mathbb{N}}$ satisfies the (full) LDP with a good rate function $\bar{I}(v)$ which is the  Legendre transform of $\bar{H}(p) = \log \int e^{px}d\nu(x)$. Thus, for each $\delta>0$,
\begin{align}\label{421}
\liminf_{n\rightarrow \infty} \frac{1}{n}\log \mathbb{Q}(C^\delta_n) \geq -\inf_{v\in (a-\delta,a+\delta)}\bar{I}(v) \geq -\bar{I}(a).
\end{align}
For sufficiently large $n$, we have
\begin{align*}
\Big\{\frac{\phi(X_1)}{n} \in [\epsilon,a]\Big\} \cap C^\delta_n \Rightarrow S_{n-1}:=\frac{\phi(X_2)+\cdots+\phi(X_n)}{n-1} \in [0,a-\epsilon+2\delta].
\end{align*}
Using the fact that
\begin{align*}
\mathbb{Q} (M_n\in [\epsilon,a])\leq n \mathbb{Q}\Big(\frac{\phi(X_1)}{n}\in [\epsilon,a]\Big),
\end{align*}
we have
\begin{align}\label{422}
\limsup_{n\rightarrow \infty} \frac{1}{n}\log \mathbb{Q}(\{M_n \in [\epsilon,a]\} \cap C^\delta_n) &\leq \limsup_{n\rightarrow \infty} \frac{1}{n}\log \mathbb{Q}\Big(\Big\{\frac{\phi(X_1)}{n} \in [\epsilon,a]\Big\} \cap C^\delta_n\Big) \nonumber \\
&\leq \limsup_{n\rightarrow \infty} \frac{1}{n}\log \mathbb{Q}( S_{n-1}\in [0,a-\epsilon+2\delta]) \nonumber \\
&\leq -\inf_{v\in [0,a-\epsilon+2\delta]}\bar{I}(v).
\end{align}
Sending $\delta \rightarrow 0$, applying  \cite[Lemma 4.1.6]{dz}, we have
\begin{align} \label{423}
\lim_{\delta \rightarrow 0} \inf_{v\in [0,a-\epsilon+2\delta]}\bar{I}(v) = \inf_{v\in [0,a-\epsilon]}\bar{I}(v).
\end{align}
Now, let us prove that 
\begin{align} \label{424}
 \inf_{v\in [0,a-\epsilon]}\bar{I}(v) > \bar{I}(a).
\end{align}
According to   \cite[Lemma 2.2.5]{dz}, $\bar{I}$ is non-increasing on the interval $(0,\int \phi d\nu)$. Since $\int \phi d\nu >a$, this implies that $\inf_{v\in [0,a-\epsilon]}\bar{I}(v) = \bar{I}(a-\epsilon)$. If $\bar{I}(a-\epsilon) = \bar{I}(a)$, then $\bar{I}(v)=\bar{I}(a)$ for all $v\in (a-\epsilon,a)$, which means that $\bar{I}'(v)=0$. Thus, $v\in \partial \bar{H}(0)$ for all $v\in (a-\epsilon,a)$, which leads to the contradiction since $\bar{H}$ is differentiable at 0. Thus,  $\bar{I}(a-\epsilon) \neq \bar{I}(a)$, and since $\bar{I}$ is non-increasing on the interval $(0,\int \phi d\nu)$, \eqref{424} is proved.

  Therefore, using \eqref{421}, \eqref{422}, \eqref{423}, and \eqref{424},
\begin{align*}
\limsup_{\delta \rightarrow 0} &\limsup_{n\rightarrow \infty} \frac{1}{n}\log \mathbb{Q}(M_n \in [\epsilon,a] | C^\delta_n) \\
&\leq \limsup_{\delta \rightarrow 0}\limsup_{n\rightarrow \infty} \frac{1}{n}\log \mathbb{Q}(\{M_n \in [\epsilon,a]\} \cap C^\delta_n) - \liminf_{\delta \rightarrow 0}\liminf_{n\rightarrow \infty} \frac{1}{n}\log \mathbb{Q}(C^\delta_n)  \\
& \leq -\inf_{v\in [0,a-\epsilon]}\bar{I}(v) + \bar{I}(a) <0.
\end{align*}
Note that according to \eqref{321},
\begin{align*}
\limsup_{\delta \rightarrow 0} \limsup_{n\rightarrow \infty} \frac{1}{n}\log \mathbb{P}(A  | C^\delta_n) = \limsup_{\delta \rightarrow 0} \limsup_{n\rightarrow \infty} \frac{1}{n}\log \mathbb{Q}(A  | C^\delta_n)
\end{align*}
for any Borel set $A$. Therefore, the proof is concluded.
\end{proof}

\subsection{Two constraints: $l^p$ spheres} \label{section 5.2} In this section, we consider the microcanonical distribution given by two $l^p$-constraints. In particular, we consider the case $\phi_1(x)=x$ and $\phi_2(x)=x^2$.
This type of the microcanonical ensemble was previously studied by Chatterjee \cite{cha2}. He established the convergence of finite marginal distributions and the localization phenomenon. However, the approach  used in \cite{cha2} is ad hoc and only adapted to the special case, so in this section we obtain the result using the unifying theory developed throughout this paper.

It is obvious that $\phi_1(x)=x$ and $\phi_2(x)=x^2$ satisfy Assumption \ref{assume0}. Note that the reference measure $\mathbb{P}$ on the configuration space $(0,\infty)^{\mathbb{N}}$ is given by  $\mathbb{P}=\text{exp}(1)^{\otimes \mathbb{N}}$. Since $\int x^2 d\mu \geq (\int x d\mu)^2$ for any $\mu\in \mathcal{M}_1(\R^+)$ and 
\begin{align*}
\Big\{\mu\in \mathcal{M}_1(\R^+) \Big \vert \mu \ll dx, \int xd\mu = v_1, \int x^2 d\mu \leq v_2\Big\}
\end{align*} 
is a non-empty set whenever $v_2> v_1^2$,  we have $g_1(v_1)=v_1^2$. This means that $\mathcal{A}_1 = \R^+$, and the admissible set is defined by 
\begin{align*}
\mathcal{A}=\{(v_1,v_2)\in (0,\infty)^2 | v_2 > v_1^2\}.
\end{align*} 
We have $v_1\in \pi_1(\partial H(p_1,0))$ for $p_1$ satisfying
\begin{align*}
\frac{\int xe^{p_1x}d\lambda}{\int e^{p_1x}d\lambda}=v_1
\end{align*} 
(see Definition \ref{def} for the meaning of projection $\pi_1$). For such $p_1$ ($p_1=1-\frac{1}{v_1}$), one can check that $\partial H(p_1,0) = \{(v_1,v_2)\in (0,\infty)^2 | v_2\geq 2v_1^2\}$ using the fact that
\begin{align*}
\frac{\int x^2e^{p_1x}d\lambda}{\int e^{p_1x}d\lambda} = 2v_1^2.
\end{align*} 
Thus, $g_2$ can be chosen as $g_2(v_1)=2v_1^2$. Obviously, $\mathcal{S}_1 = \R^+$ and $\mathcal{S}_2$ is an empty set. Also,   according to Proposition \ref{prop 2.7}, a weak LDP rate function $I$ for the sequence $(S^1_n,S^2_n)$ satisfies that for any $c>0$,
\begin{align} \label{dim2}
I(v_1,2v_1^2) = I(v_1,2v_1^2+c).
\end{align}
 For $r^2< s< 2r^2$, define $G_{r,s}$ by a probability measure on $(0,\infty)$ whose  distribution is of the form $\frac{1}{Z_{r,s}}e^{\alpha x+\beta x^2}dx$ and  satisfying
\begin{align} \label{430}
\int x dG_{r,s} = r, \quad \int x^2dG_{r,s}=s.
\end{align}
The existence of such measure can be deduced from Proposition \ref{theorem 2.10}. We first derive the following equivalence of ensembles result as an application of Theorem \ref{theorem 1.0}.
\begin{proposition}
Fix any positive integer $j$.
In the case of $a_1^2<a_2<2a_1^2$, 
\begin{align*}
\lim_{\delta \rightarrow 0} \lim_{n\rightarrow \infty} \mathbb{P}((X_1,\cdots,X_j)\in \cdot  | C^\delta_n) = G_{a_1,a_2}^{\otimes j}.
\end{align*}
On the other hand, in the case of $a_2 \geq 2a_1^2$, 
\begin{align*}
\lim_{\delta \rightarrow 0} \lim_{n\rightarrow \infty} \mathbb{P}((X_1,\cdots,X_j)\in \cdot  | C^\delta_n) =  \text{exp} (a_1)^{\otimes j}.
\end{align*}
\end{proposition}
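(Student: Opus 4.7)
The plan is to apply Theorem \ref{theorem 1.0} directly, using the structural data for $\mathcal{S}_1$, $\mathcal{S}_2$, and $g_2$ that was already worked out in the paragraph preceding the proposition: namely $\mathcal{S}_1=\R^+$, $\mathcal{S}_2=\emptyset$, and $g_2(v_1)=2v_1^2$. Since $\mathcal{S}_2$ is empty, the trichotomy in Theorem \ref{theorem 1.0} collapses to a dichotomy based on whether $a_2\geq g_2(a_1)=2a_1^2$ or $a_1^2<a_2<2a_1^2$; these are exactly the two regimes in the proposition statement. Note that the admissibility condition $(a_1,a_2)\in\mathcal{A}$ reduces to $a_2>a_1^2=g_1(a_1)$ so the conditional distribution is well-defined throughout both regimes.

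First I would handle the case $a_2\geq 2a_1^2$. Theorem \ref{theorem 1.0} gives $\lambda^*=\frac{1}{Z}e^{p_1 x}\,dx$ with $p_1$ chosen so that $\int x\,d\lambda^*=a_1$. Integrability on $(0,\infty)$ forces $p_1<0$; solving the moment equation with the explicit normalizing constant $Z=\int_0^\infty e^{p_1 x}\,dx=-1/p_1$ gives $p_1=-1/a_1$, hence $\lambda^*=(1/a_1)e^{-x/a_1}\,dx=\mathrm{exp}(a_1)$. Theorem \ref{theorem 1.0} then yields the desired convergence $\mathbb{P}((X_1,\dots,X_j)\in\cdot\,|\,C_n^\delta)\to \mathrm{exp}(a_1)^{\otimes j}$.

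For the remaining case $a_1^2<a_2<2a_1^2$, the hypothesis places us in case (ii) of Theorem \ref{theorem 1.0}, which produces $\lambda^*=\frac{1}{Z}e^{p_1 x + p_2 x^2}\,dx$ with $p_2<0$ and the two moment constraints $\int x\,d\lambda^*=a_1$ and $\int x^2\,d\lambda^*=a_2$. Comparing these features with the defining properties \eqref{430} of $G_{r,s}$ (density proportional to $e^{\alpha x+\beta x^2}$, with $\beta<0$ for integrability, matching the two prescribed moments) identifies $\lambda^*=G_{a_1,a_2}$, and Theorem \ref{theorem 1.0} again gives the stated convergence.

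There is no substantive obstacle here: the proposition is essentially a dictionary entry applying Theorem \ref{theorem 1.0} to the concrete choice $\phi_1(x)=x$, $\phi_2(x)=x^2$. The only genuine work is the one-dimensional computation identifying the exponential-family measure $\lambda^*$ produced by Theorem \ref{theorem 1.0} with $\mathrm{exp}(a_1)$ in the first regime and with $G_{a_1,a_2}$ in the second, both of which are routine. Uniqueness of the parameters $p_1$ (resp. $p_1,p_2$) matching the prescribed moments follows from the strict convexity of the log moment generating function $H$ on its domain, so $\lambda^*$ is unambiguously defined in each case.
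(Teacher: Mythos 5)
Your proposal is correct and follows exactly the paper's route: the paper states this proposition as a direct application of Theorem \ref{theorem 1.0}, using the computations immediately preceding it ($\mathcal{S}_1=\R^+$, $\mathcal{S}_2=\emptyset$, $g_2(v_1)=2v_1^2$), with the only remaining work being the identification of $\lambda^*$ as $\mathrm{exp}(a_1)$ when $a_2\geq 2a_1^2$ and as $G_{a_1,a_2}$ when $a_1^2<a_2<2a_1^2$, which you carry out. Your added remarks on integrability forcing $p_1<0$ (resp.\ $p_2<0$) and on uniqueness via strict convexity of $H$ are consistent with the paper's framework.
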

 
 Finally, let us derive the localization and delocalization result. Let us denote $M_n$ by the maximum component $M_n:= \max_i \frac{X_i^2}{n}$.  Since $\int x d\lambda = 1$ and $\phi_1,\phi_2$ satisfy the condition \eqref{assume2}, the results of Theorem \ref{theorem 1.3} and \ref{theorem 1.4} read as follows:
  \begin{proposition}
Suppose that $a_1=1$, and fix any $\epsilon>0$. In the case of $1<a_2\leq 2$,  localization does not happen in the sense that
\begin{align*}
\lim_{\delta \rightarrow 0} \lim_{n\rightarrow \infty}  \mathbb{P}(M_n > \epsilon | C^\delta_n) = 0.
\end{align*}
On the other hand, in the case of $a_2>2$, localization happens in the sense that
\begin{align*}
\lim_{\delta \rightarrow 0} \lim_{n\rightarrow \infty}  \mathbb{P}(|M_n-(a_2-2)| > \epsilon  | C^\delta_n ) = 0.
\end{align*}
\end{proposition}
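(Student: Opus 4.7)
The plan is to recognize the statement as a direct application of Theorems \ref{theorem 1.3} and \ref{theorem 1.4} to the specific choice $\phi_1(x)=x$, $\phi_2(x)=x^2$, $a_1=1$, after feeding in the computations of $g_2$, $\mathcal{S}_1$ and the auxiliary reference measure that were already carried out earlier in Section \ref{section 5.2}.

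First, I would record what is already known: Assumption \ref{assume0} is immediate for these polynomial $\phi_i$'s, and the section has shown that $\mathcal{S}_1=\mathbb{R}^+$, $\mathcal{S}_2=\emptyset$, and $g_2(v_1)=2v_1^2$. In particular $(a_1)=(1)\in\mathcal{S}_1$ with $g_2(1)=2$, so the dichotomy $a_2\leq 2$ vs.\ $a_2>2$ in the proposition matches exactly the dichotomy $a_k\leq g_2(a_1,\dots,a_{k-1})$ vs.\ $a_k>g_2(a_1,\dots,a_{k-1})$ in Theorems \ref{theorem 1.3} and \ref{theorem 1.4}.

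For $1<a_2\leq 2$, condition (ii) of Theorem \ref{theorem 1.3} is satisfied, so the delocalization estimate \eqref{delocal} gives $\mathbb{P}(M_n<\epsilon\mid C_n^\delta)\to 1$ as $n\to\infty$ then $\delta\to 0$, i.e.\ $\mathbb{P}(M_n>\epsilon\mid C_n^\delta)\to 0$. For $a_2>2$, I apply Theorem \ref{theorem 1.4}. The hypothesis \eqref{assume2} is easy: $\phi_2^{-1}(x)=\sqrt{x}$, so $(\phi_1\circ\phi_2^{-1})(x)=x^{1/2}$, giving $\gamma_1=1/2\in(0,1)$. I then identify the reference measure $\mathbb{Q}$ demanded by the theorem: the probability measure $\nu=\frac{1}{Z}e^{p_1 x}dx$ with $\int x\,d\nu=a_1=1$ forces $p_1=-1$, so $\nu=e^{-x}dx=\lambda$ and therefore $\mathbb{Q}=\mathbb{P}$. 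Theorem \ref{theorem 1.4} with the localization value $s=a_2-g_2(1)=a_2-2$ then yields $\mathbb{P}(|M_n-(a_2-2)|<\epsilon\mid C_n^\delta)\to 1$, which is the stated claim.

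There is essentially no hard step: the argument is a bookkeeping exercise translating the abstract framework to the specific polynomials $x, x^2$. The only subtlety worth pointing out is the happy coincidence that the measure $\mathbb{Q}$ required by Theorem \ref{theorem 1.4} (which in general differs from $\mathbb{P}$ and on which the lower tail bound \eqref{lower1} is sensitive by Remark \ref{remark 3.2}) reduces here to $\mathbb{P}$ itself, precisely because $a_1=1$ equals the mean of $\operatorname{exp}(1)=\lambda$; this is what allows the localization conclusion for $\mathbb{Q}$ to be transferred verbatim to the original microcanonical distribution under $\mathbb{P}$.
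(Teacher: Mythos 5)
Your proposal is correct and is essentially the paper's own argument: the paper likewise obtains this proposition by direct application of Theorems \ref{theorem 1.3} and \ref{theorem 1.4}, using $\mathcal{S}_1=\R^+$, $g_2(v_1)=2v_1^2$, $\gamma_1=\tfrac12$, and the observation that $\int x\,d\lambda=1$ so that the reference measure $\mathbb{Q}$ of Theorem \ref{theorem 1.4} coincides with $\mathbb{P}=\text{exp}(1)^{\otimes\mathbb{N}}$ when $a_1=1$. Your identification of that coincidence as the only subtle point matches the paper's reasoning exactly.
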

Note that when $a_2>2$, the upper tail estimate for  $M_n$ \eqref{upper} reads as
\begin{align*}
\limsup_{\delta \rightarrow 0} &\limsup_{n\rightarrow \infty} \frac{1}{n}\log \mathbb{P}(M_n \geq a_2-2+\epsilon \ | \ C^\delta_n) < 0,
\end{align*}
and the lower tail estimate for $M_n$ \eqref{141} reads as
\begin{align*}
\limsup_{\delta \rightarrow 0} &\limsup_{n\rightarrow \infty} \frac{1}{\sqrt{n}}\log \mathbb{P}(M_n < a_2-2-\epsilon \ | \ C^\delta_n) < 0
\end{align*}
since $\gamma_1=\frac{1}{2}$. As explained in Section \ref{section 2}, the maximum component $M_n$ behaves differently in the upper tail and lower tail regime.

\subsection{Three constraints: $l^p$ spheres.} \label{section 5.3} The last example we consider is the microcanonical ensemble given by three $l^p$-constraints. In particular, we assume that $\phi_i(x)=x^i$ for $i=1,2,3$. It is obvious that these functions satisfy Assumption \ref{assume0}. Note that the reference measure $\mathbb{P}$ on the configuration space $(0,\infty)^{\mathbb{N}}$ is given by  $\mathbb{P}=\text{exp}(1)^{\otimes \mathbb{N}}$. It is not hard to check that $\mathcal{A}_1=\{(v_1,v_2)\in (0,\infty)^2 | v_1^2<v_2\}$, $g_1(v_1,v_2)=\frac{v_2^2}{v_1}$, and the admissible set $\mathcal{A}$ is given by
\begin{align*}
\mathcal{A}=\{(v_1,v_2,v_3)\in (0,\infty)^3 | v_1^2<v_2,\ v_2^2<v_1v_3\}.
\end{align*} We first characterize the sets $S_1$ and $S_2$.

\begin{lemma} \label{lemma 5.5}
The sets $S_1,S_2$ are given by
\begin{align*}
S_1=\{(v_1,v_2)\in (0,\infty)^2 | v_1^2<v_2\leq 2v_1^2\},\  S_2=\{(v_1,v_2)\in (0,\infty)^2 | 2v_1^2<v_2\}.
\end{align*}
\end{lemma}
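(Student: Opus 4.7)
The plan is to apply Lemma \ref{prop 2.9} to reduce the characterization of $\mathcal{S}_1$ to an explicit exponential-family calculation. With $\phi_i(x)=x^i$ and base measure $d\lambda\propto e^{-x}\,dx$, the point $(p_1,p_2,0)$ lies in the domain $D$ of $H$ exactly when $p_2<0$, or $p_2=0$ and $p_1<1$. By Lemma \ref{prop 2.9}, a pair $(v_1,v_2)$ belongs to $\mathcal{S}_1$ precisely when there exist such $p_1,p_2$ for which $v_i=\int x^i\,d\nu_{p_1,p_2}$ $(i=1,2)$, where $\nu_{p_1,p_2}$ is the probability measure on $(0,\infty)$ with density proportional to $e^{(p_1-1)x+p_2 x^2}$. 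So the task is to compute the image of the pair of first two moments over this family.

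First I would dispose of the boundary piece $p_2=0$, $p_1<1$: there $\nu_{p_1,0}$ is the exponential distribution of rate $1-p_1$, with $v_1=1/(1-p_1)$ and $v_2=2/(1-p_1)^2=2v_1^2$. Hence the parabola $\{v_2=2v_1^2,\,v_1>0\}$ lies in $\mathcal{S}_1$. To analyze the interior region $p_2<0$, I fix $v_1>0$ and parameterize by $\beta:=p_2\le 0$: by strict convexity of $H(\cdot,\cdot,0)$ in the $p_1$-direction, together with the fact that the mean of $\nu_{1+\alpha,\beta}$ ranges from $0$ to $\infty$ as $\alpha$ sweeps $\mathbb{R}$, there is a unique $\alpha(\beta)$ with $\int x\,d\nu_{1+\alpha(\beta),\beta}=v_1$. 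Set $F(\beta):=\int x^2\,d\nu_{1+\alpha(\beta),\beta}$.

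The key step is monotonicity of $F$. Differentiating implicitly along the constraint curve and using the exponential-family identities $\partial_\alpha v_1=\mathrm{Var}(X)$, $\partial_\beta v_1=\partial_\alpha v_2=\mathrm{Cov}(X,X^2)$, $\partial_\beta v_2=\mathrm{Var}(X^2)$, one obtains
\[
F'(\beta)=\frac{\mathrm{Var}(X)\,\mathrm{Var}(X^2)-\mathrm{Cov}(X,X^2)^2}{\mathrm{Var}(X)},
\]
which is strictly positive by Cauchy-Schwarz (equality would force $X^2$ to be an affine function of $X$ a.s., impossible for the continuous distribution $\nu$). Combined with $F(0)=2v_1^2$, this yields the strict upper bound $v_2<2v_1^2$ whenever $p_2<0$; the lower bound $v_2>v_1^2$ is immediate from Cauchy-Schwarz.

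For surjectivity I would use the same monotonicity together with the boundary limit $\beta\to-\infty$. Rewriting the exponent as $-|\beta|\bigl(x-\alpha(\beta)/(2|\beta|)\bigr)^2+\text{const.}$ and using a standard dominated-convergence argument (which is the main technical nuisance), one shows $\alpha(\beta)/(2|\beta|)\to v_1$ and the variance $1/(2|\beta|)\to 0$, so $\nu$ concentrates at $v_1$ and $F(\beta)\downarrow v_1^2$. By the intermediate value theorem $F$ attains every value in $(v_1^2,2v_1^2]$. Letting $v_1>0$ vary yields
\[
\mathcal{S}_1=\{(v_1,v_2)\in(0,\infty)^2:v_1^2<v_2\le 2v_1^2\},
\]
whence $\mathcal{S}_2=\mathcal{A}_1\cap\mathcal{S}_1^{\,c}=\{(v_1,v_2)\in(0,\infty)^2:v_2>2v_1^2\}$, proving the lemma.
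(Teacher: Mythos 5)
Your argument is correct, but it proves both inclusions by a route genuinely different from the paper's. For the inclusion $\{v_1^2<v_2\le 2v_1^2\}\subset\mathcal{S}_1$, the paper does not do any direct moment analysis: for $v_1^2<v_2<2v_1^2$ it simply cites the two-constraint example of Section \ref{section 5.2}, where the existence of the tilted measure $G_{v_1,v_2}$ with prescribed first and second moments was deduced from the general entropy-maximizer characterization (Proposition \ref{theorem 2.10}), and for $v_2=2v_1^2$ it takes $p_2=0$, $p_1=1-1/v_1$ as you do. For the reverse inclusion ($v_2>2v_1^2$ not in $\mathcal{S}_1$), the paper argues by contradiction through Legendre duality: matching moments would force $p_2<0$, hence $(p_1,p_2)\in\partial I_1(v_1,v_2)$ for the two-dimensional rate function $I_1$, which contradicts the constancy of $I_1(v_1,\cdot)$ on $[2v_1^2,\infty)$ recorded in \eqref{dim2} (itself a consequence of Proposition \ref{prop 2.7}). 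You instead prove both directions by explicit exponential-family calculus along the curve of fixed mean: the formula $F'(\beta)=\bigl(\mathrm{Var}(X)\,\mathrm{Var}(X^2)-\mathrm{Cov}(X,X^2)^2\bigr)/\mathrm{Var}(X)>0$ gives strict monotonicity of the second moment in $\beta=p_2$, which simultaneously yields the exclusion $v_2<2v_1^2$ for $p_2<0$ and, together with the $\beta\to-\infty$ concentration limit $F(\beta)\downarrow v_1^2$ and the intermediate value theorem, the surjectivity onto $(v_1^2,2v_1^2]$. Your approach is self-contained and more quantitative (it exhibits the parametrization of $\mathcal{S}_1$ directly), at the price of some routine verifications you only sketch: continuity of $\alpha(\beta)$ and $F(\beta)$ up to $\beta=0^-$, the truncated-Gaussian limit as $\beta\to-\infty$, and the fact that matching moments conversely implies $(v_1,v_2)\in\pi_1(\partial H(p_1,p_2,0))$ — your ``precisely when'' slightly overstates Lemma \ref{prop 2.9}, which as stated gives only the forward implication, though the converse follows from convexity of $H$ exactly as in the proof of that lemma and is the same step the paper itself takes in its Step 1. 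The paper's route is shorter because it leans on the machinery already built (Propositions \ref{theorem 2.10} and \ref{prop 2.7}); yours is more elementary and would survive outside that framework.
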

\begin{proof}
We first  prove the statement for $S_1$.\\
Step 1. If $v_1^2<v_2\leq 2v_1^2$, then there exist $p_1,p_2,v_3$ such that $(v_1,v_2,v_3)\in \partial H(p_1,p_2,0)$: first, we claim that there exist $p_1,p_2$ satisfying that for $i=1,2$,
\begin{align*}
v_i  = \frac{1}{Z}\int x^ie^{p_1x+p_2x^2} d \lambda
\end{align*}
($Z$ is a normalizing constant $Z=\int e^{p_!x+p_2x^2}d\lambda$). In fact, when $v_1^2<v_2<2v_1^2$, this is proved in Section \ref{section 5.2}, and when $v_2=2v_1^2$, one can choose $p_1=1-\frac{1}{v_1},\ p_2=0$. Therefore, for $g_2(v_1,v_2)$ defined by
\begin{align*}
g_2(v_1,v_2)=\frac{1}{Z}\int x^3e^{p_1x+p_2x^2} d\lambda,
\end{align*}
we have
\begin{align*}
\partial H(p_1,p_2,0) = \{(v_1,v_2,w) | w\geq g_2(v_1,v_2)\}
\end{align*}
according to Lemma \ref{prop 2.9}.
This concludes the proof of Step 1.

Step 2. If $2v_1^2<v_2$, then there does not exist $p_1,p_2,v_3$ such that $(v_1,v_2,v_3)\in \partial H(p_1,p_2,0)$: suppose that such $p_1,p_2,v_3$ exist. Then, by Lemma \ref{prop 2.9}, for $i=1,2$,
\begin{align*}
v_i=\frac{1}{Z}\int x^i e^{p_1x+p_2x^2}d\lambda.
\end{align*}
We have $p_2<0$ since $v_2=2v_1^2$ if $p_2=0$. This implies that the logarithmic moment generating function
\begin{align*}
H_1(p_1,p_2) = \log \int e^{p_1x+p_2x^2} d\lambda
\end{align*}
is differentiable at $(p_1,p_2)$, and $(v_1,v_2)\in \partial H_1(p_1,p_2)$. By the Legendre duality, $(p_1,p_2)\in \partial I_1(v_1,v_2)$, where $I_1$ is a Legendre dual of $H_1$. However, due to \eqref{dim2}, $p_2=0$ since $v_2>2v_1^2$, which leads to the contradiction.

The statement for $S_2$ is obvious since $S_2=\mathcal{A}_1 \cap S_1^c$.
\end{proof}

As an application of Theorem \ref{theorem 1.0} and Lemma \ref{lemma 5.5}, we can deduce the following equivalence of ensembles result:

\begin{proposition}
Fix any positive integer $j$. Then,  \begin{align*}
\lim_{\delta \rightarrow 0} \lim_{n \rightarrow \infty} \mathbb{P}((X_1,\cdots,X_j)\in \cdot \ | \ C^\delta_n) = (\lambda^*)^{\otimes j},
\end{align*}
where $\lambda^*$ is characterized as follows: in the case of $a_1^2<a_2\leq 2a_1^2$ and $a_3 \geq g_2(a_1,a_2)$,
\begin{align*}
\lambda^* = \frac{1}{Z}e^{p_1x+p_2x^2} dx
\end{align*}
for $p_1,p_2$ satisfying $\int x^i d\lambda^* = a_i$ for $i=1,2$.

On the other hand, either in the case
(i) $2a_1^2<a_2$ or 
(ii) $a_1^2<a_2\leq 2a_1^2$ and $a_3<g_2(a_1,a_2)$,
\begin{align*}
\lambda^*=\frac{1}{Z}e^{p_1x+p_2x^2+p_3x^3} dx
\end{align*}
for $p_1,p_2,p_3$ satisfying $p_3<0$ and  $\int x^i d\lambda^* = a_i$ for $i=1,2,3$.
\end{proposition}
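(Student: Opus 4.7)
The plan is to derive this proposition as a direct consequence of Theorem \ref{theorem 1.0} combined with the characterization of $\mathcal{S}_1$ and $\mathcal{S}_2$ provided by Lemma \ref{lemma 5.5}. The preliminary verification that the hypotheses of Theorem \ref{theorem 1.0} are satisfied has essentially already been done: the functions $\phi_i(x) = x^i$ for $i=1,2,3$ satisfy Assumption \ref{assume0} (they are increasing, $C^1$, tend to infinity, have finite exponential moments, and $\phi_i^{\kappa} < \phi_{i+1}$ near infinity for any $1 < \kappa < \frac{i+1}{i}$). The admissible set is $\mathcal{A} = \{(v_1, v_2, v_3) : v_1^2 < v_2,\ v_2^2 < v_1 v_3\}$ (Cauchy--Schwarz), so we assume $(a_1, a_2, a_3) \in \mathcal{A}$.

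With Lemma \ref{lemma 5.5} in hand, the two regimes in Theorem \ref{theorem 1.0} translate into precisely the two cases in the statement. First, in the case $a_1^2 < a_2 \leq 2a_1^2$, we have $(a_1, a_2) \in \mathcal{S}_1$. If additionally $a_3 \geq g_2(a_1, a_2)$, then Theorem \ref{theorem 1.0} gives
\[
\lim_{\delta \to 0} \lim_{n \to \infty} \mathbb{P}((X_1,\dots,X_j) \in \cdot \mid C_n^\delta) = (\lambda^*)^{\otimes j}
\]
with $\lambda^* = \frac{1}{Z} e^{p_1 x + p_2 x^2}\, dx$ for $p_1, p_2$ such that $\int x^i\, d\lambda^* = a_i$ for $i=1,2$; existence of such $(p_1, p_2)$ follows from Step 1 of the proof of Lemma \ref{lemma 5.5}, and uniqueness from strict convexity of the log-moment generating function. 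If instead $a_3 < g_2(a_1, a_2)$, Theorem \ref{theorem 1.0} yields $\lambda^* = \frac{1}{Z} e^{p_1 x + p_2 x^2 + p_3 x^3}\, dx$ with $p_3 < 0$ and all three moment constraints satisfied.

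In the remaining case $2a_1^2 < a_2$, Lemma \ref{lemma 5.5} tells us that $(a_1, a_2) \in \mathcal{S}_2$, and so by Theorem \ref{theorem 1.0} the limit $\lambda^*$ is again of the cubic exponential form $\frac{1}{Z}e^{p_1 x + p_2 x^2 + p_3 x^3}\, dx$ with $p_3 < 0$ and $\int x^i\, d\lambda^* = a_i$ for $i = 1, 2, 3$, regardless of the value of $a_3$. Existence of the multipliers in this case follows from the second half of Lemma \ref{prop 2.9} applied to $(a_1, a_2, a_3) \in \mathcal{A}$.

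There is no real obstacle here beyond bookkeeping: the heavy lifting is done by Theorem \ref{theorem 1.0} and the specific description of $\mathcal{S}_1, \mathcal{S}_2$ in Lemma \ref{lemma 5.5}. The only point requiring a brief sanity check is that in the boundary case $a_2 = 2a_1^2$ the limiting measure in the first regime reduces to $\mathrm{exp}(a_1)$ (i.e.\ $p_2 = 0$, $p_1 = 1 - 1/a_1$), consistent with the two-constraint analysis of Section \ref{section 5.2}; this is immediate from the computation $\int x^2\, d\lambda = 2$ under $\lambda = \mathrm{exp}(1)$.
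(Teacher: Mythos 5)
Your proposal is correct and matches the paper's own treatment: the paper states this proposition exactly as a direct application of Theorem \ref{theorem 1.0} together with the identification of $\mathcal{S}_1$ and $\mathcal{S}_2$ in Lemma \ref{lemma 5.5}, which is precisely your case-by-case bookkeeping. The extra sanity checks (verification of Assumption \ref{assume0}, the boundary case $a_2=2a_1^2$) are consistent with, and slightly more explicit than, what the paper records.
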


Finally, since $\phi_i$'s satisfy the condition \eqref{assume2}, one can derive the localization and delocalization result as applications of Theorem \ref{theorem 1.3} and \ref{theorem 1.4}.

\begin{proposition}
Suppose that $(a_1,a_2)\in \mathcal{S}_2$. Then,  localization does not happen in the sense that
\begin{align} \label{prop5.3}
\lim_{\delta \rightarrow 0} \lim_{n\rightarrow \infty}  \mathbb{P}(M_n > \epsilon | C^\delta_n) = 0.
\end{align}
On the other hand, assume that  $(a_1,a_2)\in \mathcal{S}_1$. In the case of $a_3\leq g_2(a_1,a_2)$,  localization does not happen in the sense that \eqref{prop5.3} holds. However, in the case of  $a_3> g_2(a_1,a_2)$, under the reference measure $\mathbb{Q}=\nu^{\otimes 3}$ with $\nu$ of the form:
\begin{align} \label{570}
\nu = \frac{1}{Z}e^{p_1x+p_2x^2}dx,
\end{align} satisfying $\int x^i d\nu = a_i$ for $i=1,2$, localization happens in the sense that
\begin{align*}
\lim_{\delta \rightarrow 0} \lim_{n\rightarrow \infty}  \mathbb{Q}(|M_n-(a_3-g_2(a_1,a_2))| > \epsilon  | C^\delta_n ) = 0.
\end{align*}
\end{proposition}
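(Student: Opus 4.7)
The proposition follows by direct specialization of Theorem \ref{theorem 1.3} and Theorem \ref{theorem 1.4} to $\phi_i(x)=x^i$, $i=1,2,3$. The three cases listed are exactly the three regimes distinguished by those theorems, so the bulk of the work is verifying that the hypotheses transfer; no new analysis is required beyond computations.

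First, when $(a_1,a_2)\in \mathcal{S}_2$, the delocalization statement \eqref{prop5.3} is condition (i) of Theorem \ref{theorem 1.3}, applied under $\mathbb{P}=\text{exp}(1)^{\otimes \mathbb{N}}$. Similarly, when $(a_1,a_2)\in \mathcal{S}_1$ and $a_3\leq g_2(a_1,a_2)$, \eqref{prop5.3} follows from condition (ii). In both subcases, \eqref{delocal} is an exponentially small tail bound on scale $n$, so Remark \ref{remark 3.2} immediately transfers the conclusion to the uniform distribution on $C_n^\delta$ or to any reference measure of the form $\bigl(\tfrac{1}{Z}e^{q_1 x+q_2 x^2+q_3 x^3}dx\bigr)^{\otimes \mathbb{N}}$.

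For the localization case $(a_1,a_2)\in \mathcal{S}_1$ and $a_3>g_2(a_1,a_2)$, I would first invoke Lemma \ref{prop 2.9} to obtain a probability measure $\nu$ of the form \eqref{570} with $\int x\,d\nu=a_1$ and $\int x^2\,d\nu=a_2$, and set $\mathbb{Q}=\nu^{\otimes \mathbb{N}}$. The index $m$ in Theorem \ref{theorem 1.4} is $m=2$ in the interior region $a_1^2<a_2<2a_1^2$ (where $p_2<0$) and $m=1$ on the boundary $a_2=2a_1^2$ (where $\nu$ reduces to an exponential distribution with mean $a_1$). To apply Theorem \ref{theorem 1.4}, the remaining verification is the scaling hypothesis \eqref{assume2}: since $\phi_3^{-1}(x)=x^{1/3}$, one has $(\phi_i\circ\phi_3^{-1})(x)=x^{i/3}$ exactly, so \eqref{assume2} holds with $\gamma_1=1/3$ and $\gamma_2=2/3$, both in $(0,1)$. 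Theorem \ref{theorem 1.4} then yields the lower tail estimate \eqref{141} on scale $n^{\gamma_m}$, which combined with the universal upper tail bound \eqref{upper} from Theorem \ref{theorem 1.3} (valid under $\mathbb{Q}$ by Remark \ref{remark 3.2}) gives \eqref{142} and hence the desired localization at $a_3-g_2(a_1,a_2)$.

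Since everything reduces to direct application, there is no genuine obstacle at the level of this proposition; the only delicate point is confirming the applicability of Theorem \ref{theorem 1.4}, which follows from the clean computation $(\phi_i\circ\phi_3^{-1})(x)=x^{i/3}$. The substantive technical work is already encapsulated in the proof of Theorem \ref{theorem 1.4}, where the sublinear scale $n^{\gamma_m}$ necessitates the Hoeffding truncation at level $f(n)$ with $\phi_3(f(n))=n^\alpha$; that machinery is already in place and need not be revisited here.
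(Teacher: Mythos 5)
Your proposal is correct and follows essentially the same route as the paper: the proposition is obtained by specializing Theorem \ref{theorem 1.3} and Theorem \ref{theorem 1.4} to $\phi_i(x)=x^i$, using Remark \ref{remark 3.2} to transfer the $n$-scale delocalization bounds, and verifying \eqref{assume2} via $(\phi_i\circ\phi_3^{-1})(x)=x^{i/3}$, so $\gamma_1=1/3$, $\gamma_2=2/3$. Your identification of the index $m$ ($m=2$ when $p_2<0$, $m=1$ on the boundary $a_2=2a_1^2$ where $p_2=0$) matches the paper's remark that the lower-tail scale is $n^{2/3}$ or $n^{1/3}$ accordingly.
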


Note that when $a_3>g_2(a_1,a_2)$, the upper tail estimate for  $M_n$ \eqref{upper} reads as
\begin{align*}
\limsup_{\delta \rightarrow 0} &\limsup_{n\rightarrow \infty} \frac{1}{n}\log \mathbb{Q}(M_n \geq a_3-g_2(a_1,a_2)+\epsilon \ | \ C^\delta_n) < 0,
\end{align*}
and the lower tail estimate for $M_n$ \eqref{141} reads as
\begin{align*}
\limsup_{\delta \rightarrow 0} &\limsup_{n\rightarrow \infty} \frac{1}{n^\gamma}\log \mathbb{Q}(M_n < a_3-g_2(a_1,a_2)-\epsilon \ | \ C^\delta_n) < 0.
\end{align*}
Here, $\gamma = \frac{1}{3}$ when $p_2=0$ in the expression \eqref{570}, and $\gamma = \frac{2}{3}$ when $p_2<0$ in the expression \eqref{570}, since $\gamma_i=\frac{i}{3}$ for $i=1,2$.

\appendix
\section{Auxiliary lemma} \label{section a}
We prove the following auxiliary lemma  frequently used in the paper.
\begin{lemma} \label{lemma 3.1}
Suppose that Assumption \ref{assume0} holds. Also, for some $1\leq m\leq k-1$, consider  the probability distribution 
$\nu = \frac{1}{Z}e^{p_1\phi_1+\cdots+p_m\phi_m}dx$    on $(0,\infty)$ with $p_m<0$. Then,
for any number $M\geq 0$ and $\epsilon>0$,
\begin{align}\label{301}
 \lim_{n\rightarrow \infty} \frac{1}{n}\log \nu\Big(\big \vert \frac{\phi_k(X_1)}{n} - M \big \vert <\epsilon\Big) = 0.
\end{align}
Let us denote $\mathbb{Q}$ by the product measure $\mathbb{Q}=\nu^{\otimes \mathbb{N}}$. Then, for any $0<\theta<-p_m$, there exists $C=C(\theta)>0$ such that
\begin{align} \label{303}
\mathbb{Q}\Big(\sum_{i=1}^j \phi_m(X	_i)>  M\Big) <C (M-Cj)^{j-1} \exp \big[(p_m+\theta)(M-Cj)\big]
\end{align}
for any $j\in \mathbb{N}$, $M>Cj+2$.

Furthermore, under the additional condition \eqref{assume2}, 
\begin{align} \label{302}
\liminf_{n\rightarrow \infty} \frac{1}{n^{\gamma_m}} \log \nu\Big(\big \vert \frac{\phi_k(X_1)}{n}-M\big \vert<\epsilon\Big) \geq p_m M^{\gamma_m}.
\end{align}

\end{lemma}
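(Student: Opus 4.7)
For \eqref{301}, I substitute $y=\phi_k(x)$ in the integral defining $\nu(|\phi_k(X_1)/n-M|<\epsilon)$, yielding
$$\frac{1}{Z}\int_{(M-\epsilon)n}^{(M+\epsilon)n}\frac{1}{\phi_k'(\phi_k^{-1}(y))}\exp\Bigl(\sum_{i=1}^m p_i\phi_i(\phi_k^{-1}(y))\Bigr)\,dy.$$
Iterating (C3) gives $\phi_i(\phi_k^{-1}(y))\leq y^{1/\kappa^{k-i}}$ for $i\leq m<k$, so each $\phi_i(\phi_k^{-1}(y))=o(y)$; condition (C4) yields $\log\phi_k'(\phi_k^{-1}(y))=O(\log y)$. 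Hence the logarithm of the integrand is $o(n)$ uniformly on the integration range. The trivial upper bound (supremum times length $2\epsilon n$) and the pointwise lower bound obtained by integrating over a small neighborhood of any fixed point show $\log\nu(\cdot)/n\to 0$.

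For \eqref{302}, I refine the previous computation by restricting the integral to a subinterval such as $[Mn,Mn+1]\subset[(M-\epsilon)n,(M+\epsilon)n]$, thereby freezing $y$ near $Mn$. On this subinterval, \eqref{assume2} gives $\phi_m(\phi_k^{-1}(y))=M^{\gamma_m}n^{\gamma_m}(1+o(1))$; combining (C3) with \eqref{assume2} yields $\gamma_i\leq\gamma_m/\kappa^{m-i}<\gamma_m$ for every $i<m$, so $|p_i\phi_i(\phi_k^{-1}(y))|=o(n^{\gamma_m})$; and (C4) gives $\log\phi_k'(\phi_k^{-1}(y))=O(\log n)=o(n^{\gamma_m})$. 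Since $p_m<0$, the integrand on this subinterval is bounded below by $\exp(p_mM^{\gamma_m}n^{\gamma_m}+o(n^{\gamma_m}))$, and taking logarithms and dividing by $n^{\gamma_m}$ delivers \eqref{302}.

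For \eqref{303}, I would first establish the one-variable tail estimate $\nu(Y>y)\leq\exp[(p_m+\theta)(y-C_0)]$ for $Y:=\phi_m(X_1)$ and all $y\geq 0$, with some $C_0=C_0(\theta)$. This follows from the analogous change of variable $y=\phi_m(x)$: the contributions of $\sum_{i<m}p_i\phi_i(\phi_m^{-1}(y))$ and of $\log\phi_m'$ are $o(y)$ by (C3)--(C4), so they are absorbed into the shift $C_0$ at the cost of replacing $p_m$ by $p_m+\theta$. Setting $\lambda:=-(p_m+\theta)>0$, this tail bound implies that $(Y-C_0)_+$ is stochastically dominated by an $\mathrm{Exp}(\lambda)$ random variable, and since $\sum_i\phi_m(X_i)>M$ forces $\sum_i(\phi_m(X_i)-C_0)_+>M-C_0j$, independence yields
$$\mathbb{Q}\Bigl(\sum_{i=1}^j\phi_m(X_i)>M\Bigr)\leq\Pr\bigl(\mathrm{Gamma}(j,\lambda)>M-C_0j\bigr).$$
The identity $\Pr(\mathrm{Gamma}(j,\lambda)>t)=e^{-\lambda t}\sum_{k=0}^{j-1}(\lambda t)^k/k!$, together with the elementary observation $t^k\leq t^{j-1}$ for $t\geq 1$ and $k\leq j-1$, gives $\Pr(\mathrm{Gamma}(j,\lambda)>t)\leq e^{\lambda}t^{j-1}e^{-\lambda t}$ valid for every $j\geq 1$ and $t\geq 1$. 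Applying this at $t=M-C_0j>2$ and setting $C:=\max(C_0,e^\lambda)$ (which only enlarges the shift and preserves the stochastic domination) produces exactly the bound \eqref{303}.

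The main technical obstacle is the argument for \eqref{303}: in particular, producing the polynomial prefactor $(M-Cj)^{j-1}$ rather than the cruder Chernoff-type form $A^je^{(p_m+\theta)M}$ one would get from a direct moment generating function bound. This refinement, which is essential for the subsequent use of the lemma in \eqref{345}, is what forces the stochastic-domination route through a Gamma random variable; the rest of the proof is a careful but routine bookkeeping of the growth hierarchy in Assumption \ref{assume0}.
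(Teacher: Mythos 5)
Your proposal is correct and follows essentially the same route as the paper: change of variables $y=\phi_k(x)$ with the growth bounds from (C1)--(C4) and \eqref{assume2} for \eqref{301} and \eqref{302}, and for \eqref{303} a comparison of $\phi_m(X_i)$ with an exponential law (you via the tail bound and $(Y-C_0)_+$, the paper via a density bound and truncation $\1_{\phi_m(X_i)\geq C}$) followed by the Gamma$(j,\lambda)$ tail estimate, which you derive directly while the paper cites \cite{cha2}. These are only cosmetic differences, and your handling of the constant enlargement $C=\max(C_0,e^{\lambda})$ correctly preserves the domination, so the argument is sound.
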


\begin{proof}
Note that due to Assumption \ref{assume0}, for any $\theta>0$, there exists $C=C(\theta)$ such that
\begin{align*}
x>C \Rightarrow (p_m-\theta) \phi_m<p_1\phi_1+\cdots+p_m\phi_m < (p_m+\theta) \phi_m.
\end{align*}
Let us first prove \eqref{301}. Since $m<k$,  thanks to the condition (C3) in Assumption \ref{assume0}, there exists $0<\delta<1$ such that for  sufficiently large $y$,
\begin{align*}
\sum_{i=1}^m p_i\phi_i (\phi_k^{-1}(y)) < (p_m+\delta)y^{1-\delta}.
\end{align*} Thus, using the condition (C4) in Assumption \ref{assume0} and the change of variables, for  sufficiently large $n$, 
\begin{align*}
\nu\Big(\big \vert \frac{\phi_k(X_1)}{n}-M\big \vert<\epsilon\Big) &= \int_{(M-\epsilon)n}^{(M+\epsilon)n} e^{\sum_{i=1}^m p_i\phi_i (\phi_k^{-1}(y))} \frac{1}{\phi_k'(\phi_k^{-1}(y))}dy \\
&<\int_{(M-\epsilon)n}^{(M+\epsilon)n} Ce^{(p_i+\delta)y^{1-\delta}}y^C dy < C\epsilon n  e^{(p_i+\delta)((M+\epsilon)n)^{1-\delta}}((M+\epsilon)n)^C.
\end{align*}
After taking $\log$ and dividing by $n$, and then sending $n\rightarrow \infty$, we obtain \eqref{301}.

Let us now prove \eqref{303}. If we define $Y_i:=\phi_m(X_i)$, then $Y_i$'s are i.i.d. whose individual distribution is given by $\frac{1}{Z}e^{\sum_{i=1}^m p_i\phi_i(\phi_m^{-1}(y))} \frac{1}{\phi_m' (\phi_m^{-1}(y))}dy$ on $(0,\infty)$. Using Assumption \ref{assume0}, for any $0<\theta<-p_m$, there exists $C$ such that 
\begin{align} \label{304}
y>C \Rightarrow \frac{1}{Z}e^{\sum_{i=1}^m p_i\phi_i(\phi_m^{-1}(y))} \frac{1}{\phi_m' (\phi_m^{-1}(y))} < \frac{1}{Z'}e^{(p_m+\theta)y}
\end{align}
($Z'=\int e^{(p_m+\theta)y} dy$ is a normalizing constant).
Let us denote  $Z_1,Z_2,\cdots$ by i.i.d random variables whose individual distribution is given by $\text{exp}(p_m+\theta)$. Then, \eqref{304} implies that for any $K>0$,
\begin{align} \label{305}
\mathbb{Q}\Big(\sum_{i=1}^j \phi_m(X_i) \1_{\phi_m(X_i) \geq C} > K\Big) \leq \mathbb{Q}\Big(\sum_{i=1}^j Z_i >  K\Big)
\end{align}
 by the simple coupling argument.
Using the fact that  law of $\sum_{i=1}^j Z_i $ is $\text{Gamma}(j,p_m+\theta)$, it is easy to check that for $K>2$,
\begin{align} \label{306}
\mathbb{Q}\Big(\sum_{i=1}^j Z_i >  K\Big) <C K^{j-1} e^{(p_m+\theta)K}
\end{align} 
(we refer to \cite{cha2} for the estimate \eqref{306} in the case $\text{Gamma}(j,1)$ distribution). On the other hand, it is obvious that
\begin{align} \label{3066}
\sum_{i=1}^j \phi_m(X_i) > M \Rightarrow \sum_{i=1}^j  \phi_m(X_i) \1_{\phi_m(X_i) \geq C} > M- Cj.
\end{align}  Thus, \eqref{305}, \eqref{306}, and \eqref{3066} conclude the proof of \eqref{303}.

Finally, let us prove \eqref{302} under the additional condition \eqref{assume2}. Using Assumption \ref{assume0}, condition \eqref{assume2}, and the change of variables, for any $\theta,\eta>0$, 
\begin{align*}
\nu\Big(\big \vert \frac{\phi_k(X_1)}{n}-M\big \vert<\eta\Big) &> \int_{(M-\eta)n}^{(M+\eta)n} Ce^{(p_m-\theta)y^{\gamma_m}}y^C dy > C\eta n  e^{(p_m-\theta)((M+\eta)n)^{\gamma_m}}((M-\eta)n)^C 
\end{align*}
for sufficiently large $n$.
After taking $\log$, dividing by $n$,  sending $n\rightarrow \infty$, and then sending $\theta \rightarrow 0$, we have
\begin{align*}
\liminf_{n\rightarrow \infty} \frac{1}{n^{\gamma_m}} \log \nu\Big(\big \vert \frac{\phi_k(X_1)}{n}-M\big \vert<\eta\Big) \geq p_m(M+\eta)^{\gamma_m}.
\end{align*}
Since for $0<\eta<\epsilon$, 
\begin{align*}
\liminf_{n\rightarrow \infty} \frac{1}{n^{\gamma_m}} \log \nu\Big(\big \vert \frac{\phi_k(X_1)}{n}-M\big \vert<\epsilon\Big) >\liminf_{n\rightarrow \infty} \frac{1}{n^{\gamma_m}} \log \nu\Big(\big \vert \frac{\phi_k(X_1)}{n}-M\big \vert<\eta\Big),
\end{align*}
and $\eta>0$ can be arbitrary small, we obtain \eqref{302}.

\end{proof}

\section*{Acknowledgement}
The author thanks to the advisor Fraydoun Rezakhanlou for suggesting this project and sharing many helpful discussions.

\end{document}